\documentclass[12pt,reqno]{amsart}
\usepackage{amsfonts,amssymb,amsmath}
\usepackage{enumitem}
\usepackage{ytableau}
\usepackage{datetime}
\usepackage{tikz}
\usepackage{xcolor}
\newcommand\blue[1]{\textcolor{blue}{#1}}

\newcommand\magenta[1]{\textcolor{magenta}{#1}}

\newcommand\violet[1]{\textcolor{violet}{#1}}
\definecolor{fgreen}{RGB}{44,144, 14}

\setlength{\textheight}{23cm} 
\setlength{\textwidth}{16cm} 
\setlength{\topmargin}{-0.8cm} 
\setlength{\parskip}{0.25\baselineskip} 
\renewenvironment{proof}{{\bfseries Proof.}}{\qed}

\hoffset=-1.4cm 
\numberwithin{equation}{section} 
\baselineskip=15.5pt

\setcounter{tocdepth}{1}

\newtheorem{theorem}{Theorem}[section] 
\newtheorem{proposition}[theorem]{Proposition} 
\newtheorem{corollary}[theorem]{Corollary} 
\newtheorem{lemma}[theorem]{Lemma}

\theoremstyle{definition}
\newtheorem{definition}[theorem]{Definition} 
\newtheorem{remark}[theorem]{Remark} 
\newtheorem{example}[theorem]{Example}
\usepackage[colorlinks=true,  citecolor=cyan,urlcolor=blue,  
linkcolor=blue]{hyperref}

\def\D{\mathbb {D}}
\def\R{\mathbb {R}}
\def\C{\mathbb {C}}
\def\N{\mathbb {N}}
\def\H{\mathbb {H}}

\def\O{\mathbb {O}}
\def\E{\mathbb {E}}
\def\F{\mathbb {F}}

\def\ib{\mathbf {i}}
\def\jb{\mathbf {j}}
\def\kb{\mathbf {k}}
\def\d{\mathbf{ d}}

\def\PC{\mathcal {P}}

\def\OC{\mathcal {O}}
\def\BC{\mathcal {B}}

\def\g{\mathfrak {g}}

\def\p{\mathfrak {p}}

\def\s{\mathfrak {s}}

\def\u{\mathfrak {u}}
\def\o{\mathfrak {o}}
\def\l{\mathfrak {l}}
\def\z{\mathfrak {z}}

\def\th{\theta}

\def\<>{\langle \cdot\, , \cdot \rangle}
\def\lto{\longrightarrow}

\newcommand{\defref}[1]{Definition~\ref{#1}}


\begin{document} 
 \title[Reality of Unipotent Elements]{Reality of Unipotent Elements in Classical Lie Groups}
 \author[K. Gongopadhyay  \and C. Maity]{Krishnendu Gongopadhyay \and Chandan Maity
 }
\address{Indian Institute of Science Education and Research (IISER) Mohali,
 Knowledge City,  Sector 81, S.A.S. Nagar 140306, Punjab, India}
\email{krishnendug@gmail.com, krishnendu@iisermohali.ac.in}
\address{Indian Institute of Science Education and Research (IISER) Mohali,
 Knowledge City,  Sector 81, S.A.S. Nagar 140306, Punjab, India}
\email{maity.chandan1@gmail.com, cmaity@iisermohali.ac.in}

 \subjclass[2020]{Primary 20E45; Secondary: 22E60, 17B08}
\keywords{Real element, Strongly real element, unipotent element}
\begin{abstract}
The aim of this paper is to give a classification of real and strongly real unipotent elements in a classical simple Lie group. To do this, we will introduce an infinitesimal version of the notion of  classical reality in a Lie group. This notion has been applied to classify real and strongly real unipotent elements in a classical simple Lie group.   
\end{abstract}
\maketitle
\tableofcontents

\section{Introduction} 
\subsection{Reality and Strong Reality  in Groups}  
Recall that an element $g$ in a group $G$ is called \emph{real} or \emph{reversible} if it is conjugate to $g^{-1}$ in $G$ that is there is a $h$ in $G$ such that $g^{-1}=hgh^{-1}$. The element $g$ is called \emph{strongly real} or \emph{strongly reversible} if $g$ is a product of two involutions in $G$, equivalently if there is an involution $h\in G$ so that $g^{-1}=hgh^{-1}$, then $g$ is strongly real. Thus a strongly real element in $G$ is real, but a real element is not necessarily strongly real.

It has been a problem of wide interest to investigate the real and strongly real elements in groups, see \cite{FS} for an exposition of this theme. 
However, in the literature, most of the investigations address the converse question, that is when a real element is strongly real, e.g.,  in \cite{ST} Singh and Thakur proved the equivalence of reality and strong reality in a connected adjoint semisimple algebraic group over a perfect field provided $-1$ is an element in the Weyl group. It follows from \cite{ST} that in most classical groups over a field, a semisimple element is real if and only if it is strongly real. A  classical family where this does not hold is the symplectic group. 

A complete classification of real and strongly real elements is known mostly for a few families of finite groups, e.g., \cite{vg}, \cite{g81}.  
Investigations of the reality of unipotent elements are also focused around finite groups. Tiep and Zalesski investigated this question in a finite group of Lie types in \cite{tz2}. Recently close relationship between strong reality in finite simple groups and their totally orthogonal representations has been established in \cite{vi}.

Given an infinite group, a complete classification of real and strongly real elements is not known for most of the cases except for very few families, see \cite{FS}. The only classical Lie groups where concrete classifications are known are the general linear groups, compact groups and real rank one classical groups, e.g.  \cite{FS, GP, BG}. 

The aim of this paper is to give a complete classification of the real and strongly real unipotent elements in a classical simple Lie group. In the literature, the investigations of the reality problem have been done mostly from the viewpoint of linear algebra and the theory of algebraic groups. In some geometric cases, e.g. real rank one, local geometry of the transformation has also seen a role. Even though there is a complete classification for simple Lie groups, we have not seen any work where general structure theory of Lie groups has been used to tackle the problem. This work is an attempt to fill this gap in the literature and initiate an investigation of reality using the adjoint representations. For this, we introduce an infinitesimal notion of reality, viz. {\rm Ad}$_G$-real given below. This infinitesimal reality not only helps us to classify the unipotent classes, but it may be a problem of independent interest to investigate these classes in their own rights.

\subsection{Adjoint Orbits and Reality}\label{sec-adjoint-orbits}
 Let $G$ be a Lie group with Lie algebra $\g$. Consider the natural {\rm Ad}$(G)$-representation of $ G $ on its Lie algebra $ \g $
 $$  
 {\rm Ad} \colon G \lto {\rm GL}(\g)\,.
 $$
 For $X\in \g$, {\it the adjoint orbit}  of $X$ in $\g$ is defined as $\OC_X:= \{{\rm Ad}(g)X\mid g \in G \}$.
 If $X\in \g$ is nilpotent, then $\OC_X$ is called {\it nilpotent orbit}. Many important and deep results about such orbits are available in the literature, e.g. \cite{CoMc}, \cite{Mc}. We relate this theory with the above problem of reality in Lie groups. To connect the two themes, we define the following. 
 \begin{definition}\label{def1}
 	An element $X\in \g$ is called {\it {\rm Ad}$_G$-real } if $-X \in \OC_X$. An  {\rm Ad}$_G$-real  element is called {\it strongly {\rm Ad}$_G $-real}  if $-X = {\rm Ad}(\tau) X $ for some $\tau\in G$ so that $\tau ^2 = {\rm Id}$.
 \end{definition}  
 We will show that the classical reality of the unipotent elements in a Lie group and Ad$ _G$-reality of the nilpotent elements are equivalent, but this is not true in general.
 To see this let $ {\rm I}_{1,1} = {\rm diag }(1,-1) = \exp Y \in {\rm GL}_2(\C)$, where   $Y =  {\rm diag }( \sqrt{-1} 2\pi, \sqrt{-1}\pi)   $. Then $ {\rm I}_{1,1} $ is strongly real but $Y$ is  not Ad$_{{\rm GL}_2(\C)}$-real in $ \g\l_2(\C) $.
 
 In this paper, by a real element in the Lie algebra $ \g $, we mean the {\rm Ad}$ _G$-real element whenever underlined Lie group $ G $ is understood from the context. Similarly,  by the real element in a Lie group $ G $, we mean reality in the classical sense.
 We shall classify the {\rm Ad}$ _G$-real nilpotent classes in a simple Lie algebra, and that leads us to the classification of real unipotent classes in the corresponding  Lie group using the exponential map; see Section  \ref{sec-Ad-reality-vs-reality}.

\subsection{Summary of our results} First, we shall consider the complex simple Lie groups, i.e., the corresponding Lie algebra $\g$ is a simple Lie algebra over $\C$.
We will provide an elementary proof of the fact  that every unipotent element in a complex semi-simple Lie group $G$  is real without using Jacobson-Morozov Theorem, see  Lemma \ref{thm-real-nilp-g/C}. 
However, not every unipotent is strongly real in general, and we classify precisely which unipotent elements are strongly real. 
 The  non-trivial and most technical part of the paper is the classification of strongly real unipotent elements. 
Next, we will consider the real simple Lie groups and classify real and strongly real unipotent elements in those groups. As mentioned above, we have not seen any place in the literature where the problem of reality has been addressed using Lie theoretic tools. Rather it has been using tools from geometric algebra. So, to follow that tradition, we divide the real and complex Lie groups as per their geometric algebra classification and summarize our results in the context of each class.  

In the following let $\D=\R$, $\C$ or $\H$ unless stated otherwise.  Let $ {\rm GL}_n(\D) $ be the general linear group over $ \D $. Other than $ {\rm GL}_n(\D) $, we have worked with the classical Lie groups and Lie algebras as listed in Section \ref{sec-classical-Lie-gps-Lie-algs}. In the following, we summarize our results concerning these groups.

\subsubsection{The general linear group}
The real and strongly real elements in $\mathrm{ GL}_n( \C)$ and  $\mathrm{ GL}_n( \R)$  are well-understood in the literature, e.g. \cite{W}, \cite{FS}.  However, it is not so well-known in the literature for $\mathrm{ GL}_n( \H)$. We have given a uniform approach to prove that every unipotent element in $\mathrm{GL}_n(\D)$ is strongly real, see Proposition \ref{Propo-glnD}. 

\subsubsection{The special linear group}  The classification of real and strongly real elements in $\mathrm{SL}_n(\D)$ is more tricky than the general linear case. It is easy to see that the unipotent elements in $\mathrm{SL}_2(\R)$ are not even real. General investigation of involution length in $\mathrm{SL}_n( \F)$, for  a field $\F$, has been done by many authors, e.g. \cite{KN2}, \cite{KN3}. However, in view of the classification problem of real elements, the best result that we know states that for  $ n\not\equiv 2\pmod4 $  an element in $\mathrm{SL}_n(\F)$ is real if and only if it is strongly real, where $\F$ is a field of characteristic not $2$, see \cite[Theorem 3.1.1]{ST}. In this work, we ask for a precise count of the strongly real unipotent elements. We have given a complete characterization of real and strongly real unipotent elements in $\mathrm{SL}_n(\D)$. For $\D=\R$ or $\C$, not every unipotent element is strongly real, and we classify precisely when a unipotent is strongly real, see Theorem \ref{thm-sl-n-c-nilpot} and Theorem \ref{thm-sl-n-R-nilpot}. For $\mathrm{SL}_n(\H)$,  we show that every unipotent element is strongly real, see Theorem \ref{thm-sl-n-H}. 

\subsubsection{The symplectic group}Let $\F=\R$ or $\C$.  Let  ${\rm{ Sp}} (n, \F)$ denote the symplectic group over  $\F$.  It is known that not every element is strongly real in ${\rm Sp}(n, \F)$. It follows from the literature that every element in ${\rm{ Sp}} (n, \F)$ is a product of two skew-involutions, i.e. every element in ${\rm{PSp}} (n, \F)$ is strongly real, see \cite{ST}, \cite{W}.   In a recent work, Ellers and Villa \cite{EV} have proved decomposition of a symplectic map into a product of six involutions and have classified certain strongly real elements under some strong hypothesis, see \cite[Corollary 9]{EV}. In \cite{deC}, de la Cruz has given a characterization of strongly real elements in ${\rm{ Sp}} (n, \C)$. It is proved in  \cite{deC2}  that every element in ${\rm{Sp}}(2, \R)$ is a product of four involutions. However, strongly real classes are not very well-understood in ${\rm{ Sp}} (n, \R)$. In this work, we have given a very first classification of unipotent strongly real elements in ${\rm{ Sp}}(n, \R)$, see Theorem \ref{thm-sp-n-R-nilpot}.

\subsubsection{The orthogonal and unitary groups} 
Product of involutions in orthogonal groups (over general fields) have been characterized in the literature, e.g. \cite{FZ}, \cite{KN1}, \cite{KN2}. However, we do not know any concrete classification of real elements in ${\rm O}(p,q)$ or ${\rm{SO}}(p,q)$ other than when $p=1, q>0$, see \cite{FS}, \cite{G}, \cite{BM}. The same can be said about unitary groups  ${\rm U}(p,q)$. In \cite{DM2}, it has been proved that every element in ${\rm{U}}(p,q)$ is a product of finitely many reflections, and an estimate of the reflection length was given. However, concrete classification  of real and strongly real elements are known only for   ${\rm U}(n,1)$ and ${\rm{SU}}(n,1)$, see \cite{GP}. The classification of real and strongly real elements in $\mathrm{Sp}(n,1)$ has been obtained very recently in \cite{BG}. Other than the rank one groups, not much is known about a precise classification of the real elements in  ${\rm{SU}}(p,q)$ or ${\rm{Sp}}(p,q)$.

In this paper, we classify real and strongly real unipotent elements  in   ${\rm{SU}}(p,q)$ and ${\rm{Sp}}(p,q)$. We prove that every unipotent element in  ${\rm{Sp}}(p,q)$ is real, see Corollary \ref{cor-sp-pq-real}.   As we shall see that is not the case in the groups ${\rm{SU}}(p,q)$. Though reality and strong reality are equivalent in this group, not every unipotent element is real. We ask for a precise list of real or strongly real unipotent elements in $\mathrm{SU}(p,q)$, ${\rm{Sp}}(p,q)$  and classify such elements, see Theorem \ref{thm-supq-real-streal} and Theorem \ref{thm-sp-pq-st-real}.   
We have proved that every unipotent element in $\mathrm{SO}(p,q)$ is strongly real; see Theorem \ref{propo-so-pq-unilpotent}.

\subsubsection{The group $\mathrm{SO^{\ast}}(2n)$} The group $\mathrm{SO^{\ast}}(2n)$ is a real form of the complex orthogonal group $\mathrm{SO}(2n, \C)$, and is a subgroup of $\mathrm{SL}_n(\H)$. We have not seen any place in the literature where reality in ${\rm{SO^{\ast}}}(2n)$ has been addressed. In this paper, we give a very first account of the classification of real and strongly real unipotent elements in this group, see Theorem \ref{thm-so*2n-nilpot}. We hope to address the semisimple case in a later investigation. 

\medskip

A natural problem following \defref{def1} is classification of the \text{Ad}$_G$-real elements in $\g$. For completeness, we have completely classified {\rm Ad}$ _G $-reality of the semisimple and nilpotent  elements in complex simple Lie algebras.   The situation for real Lie algebras seems more tricky. We hope to address this in subsequent work.

\medskip

Another important fact is that our results describe the reversing symmetry group or extended centralizer of real unipotent elements, which is defined as follows. 
For a group $ G $, the {\it  centralizer and reverser } of an element $g$ in $G$ are respectively defined as 
$${Z}_G(g): = \{ s \in G \mid sgs^{-1} = g \}, \hbox{ and \ } {R}_G(g) := \{ r \in G \mid rgr^{-1} = g^{-1} \}.$$
Note that the set ${R}_G(g)$  is a right coset of the centralizer ${Z}_G(g)$ of $g$. Thus the { reversing symmetry group} or { extended centralizer} ${E}_G(g) := {Z}_G(g) \cup {R}_G(g)$ is a subgroup of $G$ in which ${Z}_G(g)$ has index $1$ or $2$,  see {\cite[Proposition 2.8]{FS}}.   
The group ${E}_G(g)$ is an extension of ${Z}_G(g)$ of degree at most two. Finding the reversing symmetric group ${E}_G(g)$, it is enough to construct one reversing element which is not in the centralizer.  
For an elaborate discussion on reversing symmetry groups; see {\cite[\S 2.1.4]{FS}}, \cite{BR}. We have explicitly constructed an element in $ R_G(g) $ for each real unipotent element. Accordingly, this classifies the reverser of real unipotent elements in simple Lie groups.

\subsection{Key tools}
The main strategy of our proof depends on the theory of  nilpotent orbits. More specifically,   we will use certain ordered bases which were constructed to describe centralizers of the nilpotent elements in classical simple Lie algebras as in  \cite{SS}, \cite{BCM}.   For base field $\R$ or $\C$,   a specific basis was obtained in \cite{SS} to describe the reductive parts of the centralizers of unipotent elements.   In \cite{BCM}, construction of such basis was extended to $ \H $, and was obtained in a unified way to $\D$ using the Jacobson-Morozov theorem and the structure of the related $ \s\l_2(\R) $-representations. These special bases are used to construct conjugating elements. Another key tool to classify the Ad$_G $-real elements is the parametrization of nilpotent orbits in simple Lie algebras involving the signed Young diagrams. 
A detail analysis involving the structure of the centralizer of the associated nilpotent elements are  used in the classification of the strongly Ad$_G  $-real elements.

\subsection{Structure of the paper} 
The paper is organized as follows. In Section \ref{sec-adjoint-orbits}, the notion of  {\rm Ad}$_G$-real in the Lie algebra is introduced.   Relationship between {\rm Ad}$_G$-real and classical reality is established in Section \ref{sec-Ad-reality-vs-reality}. In Section \ref{sec-notation}, we fix some notation and recall some background.   
In Section \ref{sec-reality-nilpotent-elts-complex-g} {\rm Ad}$_G$-real and strongly {\rm Ad}$_G$-real nilpotent elements in complex simple classical Lie algebras are classified. Finally, in Section \ref{sec-reality-nilpotents--real-g}, we have classified  {\rm Ad}$_G$-real and strongly {\rm Ad}$_G$-real nilpotent elements in simple classical Lie algebras over $\R$.

\section{${\mathrm{Ad}}_G$-reality and Classical Reality}\label{sec-Ad-reality-vs-reality}
In this section we will introduce an infinitesimal  version of the notion of classical reality in a Lie group. 
First, we will show that  {\rm Ad}$_G$-real in the Lie algebra implies the classical reality of the corresponding element in the Lie group.

\begin{lemma}\label{lem-Ad-real-to-classical}
Let $ G $ be a Lie group with Lie algebra $ \g $. Let $ X\in \g$ be an {\rm Ad}$_G$-real  (resp. strongly {\rm Ad}$_G$-real)  element, then $ \exp X$ is real	 (resp. strongly real) in $ G $.
\end{lemma}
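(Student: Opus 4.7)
The plan is to exploit the standard intertwining identity between the adjoint action and conjugation, namely $g\,\exp(X)\,g^{-1} = \exp(\mathrm{Ad}(g)X)$ for all $g\in G$ and $X\in\g$, together with the identity $\exp(-X)=\exp(X)^{-1}$.

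First I would handle the Ad$_G$-real case. By hypothesis there exists $g\in G$ with $-X=\mathrm{Ad}(g)X$. Applying $\exp$ to both sides and using the intertwining identity gives
\[
\exp(X)^{-1} \;=\; \exp(-X) \;=\; \exp(\mathrm{Ad}(g)X) \;=\; g\,\exp(X)\,g^{-1},
\]
which is exactly the statement that $\exp(X)$ is real in $G$.

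Next, for the strongly Ad$_G$-real case, the same calculation goes through verbatim with $g$ replaced by the involution $\tau$ supplied by the definition: since $\tau^2=\mathrm{Id}$, the element $\tau$ is an involution in $G$ that conjugates $\exp(X)$ to its inverse, so $\exp(X)$ is strongly real.

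There is essentially no obstacle here: the lemma is a direct consequence of the naturality of $\exp$ with respect to Lie group homomorphisms (applied to inner automorphisms). The only minor point worth noting is that the involution witnessing strong reality in $G$ is literally the same $\tau$ that witnesses strong Ad$_G$-reality in $\g$, so no auxiliary construction is required. The converse direction is, of course, false in general, as the example of $\mathrm{I}_{1,1}\in\mathrm{GL}_2(\C)$ already mentioned in the introduction illustrates, but it is not needed for this lemma.
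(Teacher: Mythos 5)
Your proof is correct and is the standard argument; the paper in fact states this lemma without proof, and the computation you give via $g\exp(X)g^{-1}=\exp(\mathrm{Ad}(g)X)$ and $\exp(-X)=\exp(X)^{-1}$ is exactly the intended justification, including the observation that the same involution $\tau$ witnesses strong reality.
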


 There is  a $ G $-equivariant bijection between the set of nilpotent elements in $ \g $ and the set of the unipotent elements in $ G $ via the exponential map.

\begin{corollary}\label{cor-Ad-real-unipotent-elt}
Let $ G $ be a semi-simple linear Lie group, and $ u\in G $ be a unipotent element so that $ u=\exp X $. Then $ u $ is real (resp. strongly real) if and only if $ X $ is ${\rm Ad}_G $-real (resp. strongly  ${\rm Ad}_G $-real ) in $ \g$. 
\end{corollary}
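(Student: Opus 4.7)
The plan is to leverage the two ingredients already on the table: Lemma~\ref{lem-Ad-real-to-classical} gives the ``only if'' direction for free, so the only real work is the converse, and there the author has essentially handed us the needed tool in the sentence immediately preceding the corollary, namely that $\exp$ restricts to a $G$-equivariant \emph{bijection} between nilpotent elements of $\g$ and unipotent elements of $G$.

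First, I would record the trivial direction: if $X$ is {\rm Ad}$_G$-real (resp.\ strongly {\rm Ad}$_G$-real), then Lemma~\ref{lem-Ad-real-to-classical} immediately yields that $u = \exp X$ is real (resp.\ strongly real) in $G$.

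For the converse, assume $u$ is real in $G$, so there exists $h \in G$ with $h u h^{-1} = u^{-1}$. Using the standard identity $h (\exp X) h^{-1} = \exp({\rm Ad}(h) X)$ together with $u^{-1} = \exp(-X)$, we obtain
\[
\exp({\rm Ad}(h) X) \;=\; \exp(-X).
\]
Now both ${\rm Ad}(h) X$ and $-X$ are nilpotent elements of $\g$: the former because ${\rm Ad}(h)$ is a Lie algebra automorphism and hence preserves nilpotency, and the latter trivially. The $G$-equivariant bijection between nilpotent elements of $\g$ and unipotent elements of $G$ via the exponential map means exactly that $\exp$ is injective on the nilpotent cone. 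Applying this injectivity gives ${\rm Ad}(h) X = -X$, i.e.\ $-X \in \OC_X$, so $X$ is {\rm Ad}$_G$-real.

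For the strong version, the same argument applies verbatim: if in addition $h^2 = {\rm Id}$, then the same $h$ witnesses that $X$ is strongly {\rm Ad}$_G$-real, because we have produced ${\rm Ad}(\tau) X = -X$ for $\tau = h$ with $\tau^2 = {\rm Id}$. The main (and essentially only) subtle point is justifying that the equality $\exp Y_1 = \exp Y_2$ for nilpotent $Y_1, Y_2 \in \g$ forces $Y_1 = Y_2$; this is the content of the $G$-equivariant bijection cited just before the corollary, and in a semisimple Lie group it follows from the fact that for nilpotent $Y$, the series $\exp Y = \sum_{k \ge 0} Y^k/k!$ terminates in any finite-dimensional representation and the inverse map $\log$ is given by a polynomial on the unipotent variety. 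Once this injectivity is invoked, no further computation is needed.
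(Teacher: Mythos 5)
Your proposal is correct and follows essentially the same route as the paper: the paper derives the forward implication from Lemma~\ref{lem-Ad-real-to-classical} and the converse from the $G$-equivariant bijection between nilpotent elements of $\g$ and unipotent elements of $G$ stated immediately before the corollary, which is exactly the injectivity of $\exp$ on the nilpotent cone that you invoke. No gap.
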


\begin{remark}
For a connected simply connected  nilpotent Lie group $ N $ with Lie algebra $ \mathfrak{n} $, the exponential map $ \exp \colon \mathfrak{n} \to N $ is a diffeomorphism; see \cite[Theorem 1.127]{K}. Thus	
The classical reality of a connected simply connected  nilpotent Lie group $ N $  is  determined by the {\rm Ad}$ _N$-reality in $\mathfrak{n} $.	  \qed
\end{remark}

It is not true in general for a Lie group $G$ that $g =\exp X \in G$ is real (resp. strongly real)  if and only if  $ X $ is {\rm Ad}$ _G $-real (resp. strongly {\rm Ad}$ _G $-real).  We give examples to illustrate this  situation.

\begin{example}\label{rmk-reality-vs-Adreal}
{\bf (i)~} Let $g= - {\rm Id} \in {\rm SL}_2(\C)$ and $ X= {\rm diag}(	{\sqrt{ -1}\,\pi}, -	{\sqrt{ -1}\,\pi} ) $. Then $ g= e^X $. Note that $ g $ is strongly real in ${\rm SL}_2(\C)$. But $ X $ is not a strongly {\rm Ad}$ _G $-real element in $ \s\l_2(\C) $.
	
{\bf (ii)~} Next we will consider the compact Lie group $ {\rm SO} _2$.  The element $ \sigma={\rm -I}_2=\exp \begin{pmatrix}
	0 & \pi\\
	-\pi&0
	\end{pmatrix} \in {\rm SO}_2$ is real. But $\begin{pmatrix}
	0 & \pi\\
	-\pi&0
	\end{pmatrix}$ is not {\rm Ad}$ _G $-real in the Lie algebra $\s\o_2$.  \qed
\end{example}

\section{Notation and background}\label{sec-notation}
In this section, we fix some notation and recall some known results which will be used in this paper. We will follow the notation of \cite{BCM}.  

Once and for all fix a square root of $-1$ and call it $\sqrt{-1}$.  The Lie groups will be denoted by the capital letters,  while the Lie algebra of a Lie group will be denoted by the corresponding lower case 
German letter.  Sometimes, for notational convenience, the Lie algebra of a Lie group $G$ is also denoted by  ${\rm Lie} (G)$.  The connected component of a Lie group $G$  containing the identity element  is denoted by $G^{0}$.  For a subset $S$ of $\g$, the   subgroup of $G$ that fixes $S$ point wise under the adjoint action is called the {\it centralizer}  of $S$ in $G$; the centralizer of $S$ in $G$ is denoted by $Z_{G} (S)$.  Similarly, for a Lie subalgebra $\g$ and a subset $S \,\subset\, \g$, by $\z_\g (S)$ we will denote the subalgebra of $\g$ consisting of all the elements that commute with every element of $S$.

\subsection{Associated Lie groups and Lie algebras}\label{sec-Associated-Lie-gp}
Let $\D=\R$, $\C$ or $\H$. Let $V$ be a right vector space of finite dimension over $\D$. 
Let ${\rm End}_\D (V)$ be the real algebra of {\it right $\D$-linear maps} from $V$ onto $V$. 
After choosing a basis for $V$,  the elements of ${\rm End}_\D (V)$ can be represented by matrices over $\D$. 
For a $\D$-linear map $T \,\in\, {\rm End}_\D (V)$ and an ordered $\D$-basis $\BC$ of $V$,
the {\it matrix of $T$ with respect to $\BC$} is denoted by $[T]_{\BC}$.
Let  ${\rm GL(V)}$ denote the {\it group of invertible} right  $ \D $-linear maps from ${\rm End}_\D (V)$.

When $\D=\R$ or $\C$, let
$
{\rm tr} \,:\, {\rm End}_\D (V) \,\longrightarrow\, \D\, \ \ \text{ and }\ \
{\rm det} \,:\, {\rm End}_\D V \,\longrightarrow\, \D
$,
 respectively be the usual {\it trace} and {\it determinant} maps.  Define 
$${\rm SL}(V)\,:=\, \{ g \,\in \,{\rm GL}(V) \, \mid \, \det(g)\,= \,1\} \ \ \text{ and } 
\ \ \s\l (V ) \,:=\, \{ T \,\in\, {\rm End}_\D (V) \, \mid \, \text{tr}(T)\,=\, 0 \}\, .$$

For $\D = \H$, recall that $A={\rm End}_\D(V)$ is a central simple $\R$-algebra. Let
$\text{Nrd}_A\,:\, A \,\longrightarrow\, \R$ be the {\it reduced norm} on $A$, and
let $\text{Trd}_A \,:\, A \,\longrightarrow\, \R$ be the {\it reduced trace} on $A$. Define
$$
{\rm SL}(V):= \{ g \in {\rm GL}(V)  \mid  \text {Nrd}_{ {\rm End}_\D V} (g)= 1\}  \text{ and } \s\l (V ):= \{ T\in {\rm End}_\D(V) \mid 
\text{Trd}_{ {\rm End}_\D (V)} (T)= 0\}.
$$

Let $\D=\R$, $\C$ or $\H$, as above.
Let $\sigma$ be either the identity map $\text{Id}$ or an {\it involution} of $\D$, that is,  
$\sigma$ is a real linear anti-automorphism of $\D$ with $\sigma^2 \,=\, \text{Id}$.  Let $\epsilon \,=\, \pm 1$.
Following \cite[\S~23.8, p. 264]{Bo}, we call a map 
$\,\langle \cdot,\,
\cdot \rangle \,\colon\, V \times V \,\longrightarrow\, \D\,$ a
$\epsilon$-$\sigma$ {\it Hermitian form} if 
\begin{enumerate}[label=(\roman*)]
	\item $\langle u+v, w \rangle =\langle u, w  \rangle + \langle v, w \rangle$, 	
	\item $ \langle v,\, u \rangle \,= \, \epsilon \sigma( \langle u, v \rangle)$, and
	
	\item $ \langle v \alpha,\, u \rangle \,=\, \sigma (\alpha) \langle v,\, u \rangle$ for all
	$v,\,u ,\,w\,\in\, V$ and for all $\alpha \,\in\, \D$.
\end{enumerate}

Recall that A $\epsilon$-$\sigma$ Hermitian form $ \langle \cdot, \, \cdot \rangle $ is {\it
	non-degenerate} if for every non-zero $u$ in $V$, there is a non-zero $v$ such that $ \langle v,\, u \rangle \,\neq \,0 $.  We define 
\begin{align}\label{defn-U-gp-4-epsilon-delta-form}
{\rm U} (V,\, \langle \cdot,\, \cdot \rangle ) \,:=\,
\{g \,\in\, {\rm GL}(V) \, \mid \, \langle gv ,\, gu \rangle \,= \, \langle v ,
\, u \rangle ~~ \forall~~ v,u \,\in \,V \}
\end{align}
and
$$
\u (V,\, \langle \cdot, \,\cdot \rangle ) \,:=\, \{T\,\in\, {\rm End}_\D(V) \, \mid \, 
\langle Tv ,\, u \rangle + \langle v ,\, Tu \rangle \,= \,0 ~~\forall~~ v,u \,\in \,V \}\, .
$$ 
We next define $${\rm SU} (V, \langle \cdot, \cdot \rangle )\,: =\,
{\rm U} (V, \langle \cdot, \cdot \rangle ) \cap {\rm SL}(V)\ \ \text{ and } \ \
\s\u (V, \langle \cdot, \cdot \rangle ) \,:=\,
\u (V, \langle \cdot, \cdot \rangle ) \cap \s\l (V)\, .$$ 
It is well-known that  $\s\u (V, \,\langle \cdot,\,\cdot \rangle )$ is
a simple Lie algebra (cf. \cite[Chapter I, Section 8]{K}). 

\subsection{Classical simple Lie groups and Lie algebras}\label{sec-classical-Lie-gps-Lie-algs}
For a suitable choice of $ V $ and $ \<> $, we have the following classical Lie groups and Lie algebras which will be used here.  We define the {\it usual conjugations} $\sigma_c$ on $\C$ by
$\sigma_c (x_1 + \sqrt{-1}x_2 ) \,=\, x_1-\sqrt{-1}x_2$, and on 
$\H$ by $\sigma_c (x_1 + \ib x_2 + \jb x_3 + \kb x_4 ) \,=\, x_1 - \ib x_2 - \jb x_3 - \kb x_4$, $x_i \in \R$ for $i= 1, \dots , 4$. For $P\,=\, (p_{ij}) \,\in\, {\rm M}_{r\times s}(\D)$,  $P^t$ denotes the {\it transpose} of $P$. If $\D \,=\, \C$ or $\H$, then define $\overline{P} \,:=\, (\sigma_c(p_{ij}))$. Let
\begin{equation}\label{defn-I-pq-J-n}
{\rm I}_{p,q} \,:=\, \begin{pmatrix}
{\rm I}_p  \\
& -{\rm I}_q
\end{pmatrix}\, , ~~~ \
{\rm J}_n \,:=\, \begin{pmatrix}
& -{\rm I}_n  \\
{\rm I}_n & 
\end{pmatrix}\,.
\end{equation}

We will work with the following classical Lie groups and Lie algebras:
\begin{align*}
{\rm SL}_n (\C)&:=\, \{g \,\in\, {\rm GL}_n (\C) \,\mid\, \det (g) \,=\,1  \}, 
&{\s\l}_n (\C):=\{z \,\in\, {\rm M}_n (\C) \,\mid\, \text{tr} (z) \,=\,0 \};
\\
{\rm SO} (n,\C)&:=\, \{g \,\in\, {\rm SL}_{n}(\C)\,\mid\, g^t g \,=\, {\rm I}_{n} \},
&{\s\o} (n,\C):= \{z \,\in\, \s\l_{n}(\C) \,\mid\, z^t {\rm I}_{n} + {\rm I}_{n} z \,=\,0 \};\\
{\rm Sp} (n,\C)&:=\, \{g \,\in\, {\rm SL}_{2n}(\C) \,\mid\, g^t {\rm J}_{n} g \,=\,{\rm J}_{n} \}, 
&{\s\p} (n,\C):= \{z \,\in\, \s\l_{2n}(\C) \,\mid\, z^t {\rm J}_{n} + {\rm J}_{n} z \,=\,0 \};
\end{align*}
\begin{align*}
&{\rm SL}_n (\R):=\, \{g \,\in\, {\rm GL}_n (\R) \,\mid\, \det (g) \,=\,1  \}, \qquad \qquad
{\s\l}_n (\R)\,:=\, \{z \,\in\, {\rm M}_n (\R) \,\mid\, \text{tr} (z) \,=\,0 \};
\\
&{\rm SL}_n (\H):=\, \{g \,\in\, {\rm GL}_n (\H) \,\mid\, \text{Nrd}_{{\rm M}_n (\H)} (g) \,=\,1 \}, \,  
{\s\l}_n (\H):= \{z \,\in\, {\rm M}_n (\H) \,\mid\, \text{Trd}_{{\rm M}_n (\H)} (z) \,=\,0 \};
\\
&{\rm SU} (p,q):=\{g \in\, {\rm SL}_{p+q}(\C) \,\mid\, \overline{g}^t{\rm I}_{p,q} g \,= \,{\rm I}_{p,q} \} , \ \
{\s\u} (p,q):= \{z \,\in\, \s\l_{p+q}(\C)\,\mid\, \overline{z}^t{\rm I}_{p,q} + {\rm I}_{p,q} z\,=\,0 \};
\\
&{\rm SO} (p,q):=\, \{g \,\in\, {\rm SL}_{p+q}(\R)\,\mid\, g^t{\rm I}_{p,q} g \,=\, {\rm I}_{p,q} \},\   \
{\s\o} (p,q):= \{z \,\in\, \s\l_{p+q}(\R) \,\mid\, z^t {\rm I}_{p,q} + {\rm I}_{p,q} z \,=\,0 \};
\\
&{\rm Sp} (p,q):=\, \{g \,\in\, {\rm SL}_{p+q}(\H)\, \mid\, \overline{g}^t{\rm I}_{p,q} g \,=\, {\rm I}_{p,q} \},\    \
{\s\p} (p,q):= \{z \,\in\, \s\l_{p+q}(\H) \,\mid\, \overline{z}^t{\rm I}_{p,q} + {\rm I}_{p,q} z \,=\,0 \};
\\
&{\rm Sp} (n,\R):=\, \{g \,\in\, {\rm SL}_{2n}(\R) \,\mid\, g^t {\rm J}_{n} g \,=\,{\rm J}_{n} \}, \quad
{\s\p} (n,\R)\,:=\, \{z \,\in\, \s\l_{2n}(\R) \,\mid\, z^t {\rm J}_{n} + {\rm J}_{n} z \,=\,0 \};
\\
&{\rm SO}^* (2n):=\, \{g \,\in\, {\rm SL}_{n}(\H) \,\mid\, \overline{g}^t \jb{\rm I}_n g\,= \, \jb{\rm I}_n \}, \quad
{\s\o}^* (2n)\,:=\, \{z \,\in\, \s\l_{n}(\H) \,\mid\, \overline{z}^t \jb {\rm I}_n + \jb {\rm I}_n z \,=\,0 \}.
\end{align*}

\subsection{Partitions and (signed) Young diagrams}\label{sec-partition-Young-diagram}
For two   disjoint ordered sets  $(v_1,\, \ldots ,\,v_n)$ and $(w_1,\, \ldots ,\,w_m)$, 
the ordered set $(v_1,\, \ldots ,\,v_n,\, w_1,\, \ldots ,\, w_m)$ will be denoted by
$$
(v_1, \,\ldots ,\,v_n) \vee (w_1,\, \ldots ,\,w_m)\, .
$$
A {\it partition} of a positive integer $n$ is an object of the form
$\d:=[ d_1^{t_{d_1}},\, \ldots ,\, d_s^{t_{d_s}} ]$, where $t_i,\, d_i \,\in\, \N$, $ 1 \,\leq\, i
\,\leq\, s$, such that
$\sum_{i=1}^{s} t_{d_i} d_i \,=\, n$, $t_{d_i} \,\geq\, 1$ and $ d_1>\cdots>d_s>0$;  see   \cite[\S~3.1, p.~30]{CoMc}.
Let $\PC(n)$ be the {\it set of all partitions of $n$}. For
a partition $\d\, =\, [ d_1^{t_{d_1}},\, \ldots ,\, d_s^{t_{d_s}} ]$ of $n$, define 
\begin{equation}\label{Nd-Ed-Od}
	{\N}_{\d} \,:=\, \{  d_1,\, \ldots ,\, d_s \}\, ,\ \
	{\E}_{\d} \,:=\, {\N}_{\d}\cap (2\N)\, , \ \ 
	{\O}_{\d} \,:=\, {\N}_{\d}\setminus {\E}_{\d} .
1\end{equation}
\medskip 
Further define 
\begin{equation}\label{Od1-Od3}
	{\O}^1_{\d} := \{ d\, \mid \,d \in \O_\d, \  d \equiv 1 \pmod{4} \}\ 
	\text{ and } \ {\O}^3_{\d} := \{ d\, \mid\, d \in \O_\d,\  d \equiv 3 \pmod{4} \} .
\end{equation}
\medskip 
Following \cite{CoMc}, a partition
$\d$ of $n$ will be called {\it even} if ${\N}_{\d} \,=\, {\E}_{\d}$. 
Let $\PC_{\rm even} (n)$ be the subset of $\PC(n)$ consisting of all even partitions of $n$. 
We call a partition 
$\d$ of $n$ to be {\it very even} if $\d$ is even, and $t_\eta$ is even for all $\eta \in {\N}_{\d}$.
Let $\PC_{\rm v. even} (n)$ be the subset of $\PC(n)$ consisting of all very even partitions of  $n$.
Now define
\begin{align}\label{defn-Pn-1}
	\PC_{-1}(n) \,:=&\, \{\d \,\in\, \PC(n) \,\mid\,  t_\theta ~\text{ is even for all }~ \theta \,\in\, \O_\d \}\, ,\\
	\PC_{1}(n) \,:=&\, \{\d \,\in\, \PC(n) \,\mid\,  t_\eta ~\text{ is even for all }~ \eta \,\in\, \E_\d \}\, .    \label{defn-Pn1}
\end{align}

Next, we will define the Young diagram and signed Young diagram, see \cite[p. 140]{CoMc}. For our convenience, we will follow the modified definition of the signed Young diagram used in \cite[\S 2.4]{BCM}. Such signed Young diagram is used to parametrize the nilpotent orbits in classical simple real Lie algebras. In Section \ref{sec-reality-nilpotents--real-g}, such notion will be used.

\medskip

\begin{definition}\label{def-young-diag}~
	\begin{itemize}
\item  A {\it Young diagram} is a left-justified array of rows of empty boxes arranged so that no row is shorter than the one below it; the {\it size} of a  Young diagram is the number of empty boxes appearing in it.  There is an obvious correspondence between the set of Young diagrams of size $n$ and the set $\PC(n)$ of partitions of $n$.  Hence the {\it set of Young diagrams of size $n$} is also denoted by $\PC(n)$.
		
\item A {\it signed Young diagram} is a Young diagram in which every box is labelled with $+1$ or $-1$ such	that the sign of $1$ alternate across rows except when the length of the row is of the form  $3\,\pmod{4}$. In the latter case when the length of the row is of the form $3\,\pmod{4}$  
we will alternate the sign of $1$ till the last but one and repeat the sign of $1$ in the last	box as in the last but one box.	The sign of $1$ need not alternate down columns. 
\item  Two	signed Young diagrams are equivalent if and only if each can be obtained from the other by	permuting rows of equal length. 
\item The {\it signature of a signed Young diagram} is the	ordered pair of integers $(p,\,q)$ where $p$-many $+1$ and $q$-many $-1$ occur in it.     \qed
\end{itemize} 	
\end{definition}

\begin{remark}	\label{rmk-signed-young-diag-unique}
Let $ p_d $ (resp. $ q_d $) be the number of $ +1 $ (resp. $ -1 $) in the $ 1^{\rm st} $ column of the rectangular block of size $ t_d\times d $.
Then a signed Young diagram of size $ n $ is uniquely determined by a partition $\d\, =\, [ d_1^{t_{d_1}},\, \ldots ,\, d_s^{t_{d_s}} ]$ of $n$, and a pair of integer $ (p_d,q_d) $ for all $ d\in \N_{\d} $ so that $ p_d+q_d=t_d $.  \qed
\end{remark}

\subsection{Jacobson-Morozov Theorem and associated results}\label{sec-Jacobson-Morozov-partion}
For a  Lie algebra $ \g $ over $\R$, a subset $\{X,H,Y\} \,\subset\, \g$ is said to be a {\it $\s\l_2$-triple}  if $X \,\neq\, 0$, $[H,\, X] = 2X$, $[H,\, Y] =  -2Y$ and $[X,\, Y] =H$. 
Note that  for a $\s\l_2$-triple $\{X,H,Y\}$ in $\g$, $\text{Span}_\R \{X,H,Y\}$ is isomorphic to  $\s\l_2(\R)$. We now recall a famous result due to Jacobson and Morozov.

\begin{theorem}[{Jacobson-Morozov, cf.~\cite[Theorem~9.2.1]{CoMc}}]\label{Jacobson-Morozov-alg}
Let $X\,\in\, \g$ be a non-zero nilpotent element in a  semisimple Lie algebra $\g$ over $\R$. Then there exist $H,\,Y\, \in\, \g$ such that $\{X,H,Y\}$ is a $\s\l_2$-triple.
\end{theorem}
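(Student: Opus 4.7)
Plan: The theorem is classical (due to Jacobson and Morozov) and the proof I would give follows the standard two-step strategy. Set $\phi := \mathrm{ad}(X) \colon \g \to \g$; this is a nilpotent endomorphism of $\g$ since $X$ is nilpotent in the semisimple algebra $\g$.

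Step 1 (construct $H$): The aim is to produce $Y_0 \in \g$ with $\phi^2(Y_0) = -2X$; then $H := [X, Y_0] = \phi(Y_0)$ automatically satisfies $H \in \operatorname{Im}(\phi) = [X, \g]$, and a one-line computation using antisymmetry gives $[H, X] = -\phi^2(Y_0) = 2X$. To produce such a $Y_0$, equivalently to show $X \in \operatorname{Im}(\phi^2)$, I would exploit the nondegeneracy of the Killing form $B$ on $\g$, which is guaranteed by the semisimplicity of $\g$. Invariance of $B$ yields $B(\phi U, V) = -B(U, \phi V)$, so $\phi$ is skew with respect to $B$ and $\operatorname{Im}(\phi^{k})^{\perp} = \ker(\phi^{k})$ for every $k$. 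The required membership $X \in \operatorname{Im}(\phi^{2})$ is therefore equivalent to $B(X, W) = 0$ for all $W \in \ker(\phi^{2})$, which I would verify via a trace computation using that $\phi(W) \in \ker(\phi)$ commutes with $X$ together with the nilpotency of $\phi$ (products of commuting nilpotents are nilpotent and hence have vanishing trace).

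Step 2 (construct $Y$, Morozov's lemma): With $H = [X, Y_0]$ as above, set $T := [H, Y_0] + 2 Y_0$. A direct Jacobi calculation gives $[X, T] = -2[X, Y_0] + [H, H] + 2H = 0$, so $T \in \ker(\phi)$. The task then reduces to finding $Z \in \ker(\phi)$ with $(\mathrm{ad}(H) + 2\,\mathrm{Id})(Z) = -T$; setting $Y := Y_0 + Z$ gives the required $\s\l_2$-triple, since by construction $[X, Y] = H$ and $[H, Y] = -2 Y$. The identity $[\mathrm{ad}(H), \phi] = \mathrm{ad}([H, X]) = 2\phi$ implies that $\mathrm{ad}(H)$ preserves $\ker(\phi)$, and a standard weight-chain argument using nilpotency of $\phi$ shows that the eigenvalues of $\mathrm{ad}(H)|_{\ker(\phi)}$ are nonnegative integers; in particular $-2$ is not an eigenvalue, so $\mathrm{ad}(H) + 2\,\mathrm{Id}$ is invertible on $\ker(\phi)$ and the required $Z$ exists uniquely.

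The main obstacle is Step 1, where the semisimplicity of $\g$ (via nondegeneracy of $B$) enters essentially to guarantee $X \in \operatorname{Im}(\phi^{2})$; this is the technical heart of the argument, and the conclusion fails for nonsemisimple $\g$. Step 2 is a relatively clean application of the Jacobi identity and finite-dimensional linear algebra once Step 1 is in hand.
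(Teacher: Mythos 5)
The paper does not actually prove this statement: Theorem~\ref{Jacobson-Morozov-alg} is quoted as a classical result with a pointer to \cite[Theorem~9.2.1]{CoMc}, so there is no in-paper argument to measure yours against. Your two-step architecture (produce $H\in[X,\g]$ with $[H,X]=2X$ using nondegeneracy of the Killing form, then complete the triple by Morozov's lemma) is the standard one, and your Step~2 is a correct sketch: the computation $[X,T]=0$, the reduction to solving $(\mathrm{ad}(H)+2)Z=-T$ inside $\ker\phi$, and the invertibility of $\mathrm{ad}(H)+2$ there (which crucially uses $H\in\operatorname{Im}\phi$, as you arrange) all match the textbook proof.

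The gap is in Step~1, at exactly the point you identify as the technical heart. The trace argument you invoke --- two commuting operators, one nilpotent, have nilpotent and hence traceless product --- applies only to $W\in\ker\phi=\z_\g(X)$, where $\mathrm{ad}(W)$ genuinely commutes with $\mathrm{ad}(X)$; it therefore proves only $B(X,\ker\phi)=0$, i.e.\ $X\in\operatorname{Im}(\phi)=[X,\g]$, which is the weaker statement recorded as Lemma~\ref{X-gx} in the paper (\cite[(3.3.4)]{CoMc}). For $W\in\ker(\phi^2)\setminus\ker(\phi)$ the operator $\mathrm{ad}(W)$ does \emph{not} commute with $\mathrm{ad}(X)$, and the fact that $\phi(W)$ commutes with $X$ only yields $B(X,[X,W])=0$, which is automatic from invariance and says nothing about $B(X,W)$. (Writing $X=\phi(V)$ and passing to $B(X,W)=-B(V,\phi(W))$ does not close the loop either.) So the key membership $X\in\operatorname{Im}(\phi^2)$ is asserted rather than proved. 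It is true, but every standard proof needs an additional idea at this point: Morozov's nilpotency lemma (every element of $\ker\phi\cap\operatorname{Im}\phi$ is nilpotent) plus a further step, an induction on $\dim\g$ as in \cite{CoMc}, or first establishing the theorem for $\g\l(V)$ via Jordan normal form and transporting the conclusion through $\mathrm{ad}\colon\g\to\g\l(\g)$. As written, Step~1 does not go through.
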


Let $V$ be a right $\D$-vector space of dimension $n$, where $\D$ is, as before, $\R$ or $ \C $ or $ \H$.  Let $\{X,\,H,\,Y\} \,\subset\, \s\l (V )$ be a $\s\l_2$-triple. Note that $V$ is also  a $\R$-vector space using the inclusion $\R \,\hookrightarrow\, \D$. Hence $V$ is a module  over $ \text{ Span}_\R \{ X,\,H,\,Y\} \,\simeq\, \s\l_2(\R)$.   For any positive integer $d$, let $M(d-1)$ denote the sum of all the $\R$-subspaces $W$ of $V$ such that
\begin{itemize}
\item $\dim_\R W \,= \,d$, and
\item $W$ is an 	irreducible $\text{Span}_\R \{ X,H,Y\}$-submodules of $V$.
\end{itemize} 
Then $M(d-1)$ is the {\it isotypical component} of $V$ containing all the irreducible submodules  of $V$ with highest weight $d-1$. 
As  $X,\,H,\,Y$ of $V$ are $\D$-linear, the $\R$-subspaces $M(d-1)$ of $V$ are also $\D$-subspace. Let
\begin{equation}\label{definition-L-d-1}
  L(d-1)\,:= \,{\rm ker\,}Y \cap M(d-1)\, , \quad {\rm  and } \quad t_{d} \,:=\, \dim_\D L(d-1)\, .
\end{equation}
Consider the non-zero irreducible $\text{Span}_\R \{ X, H, Y\}$-submodules of $V$. 
Let $\{d_1,\, \ldots, \, d_s\}$, with $d_1 \,>\, \cdots \,>\, d_s$, be the integers that occur as $\R$-dimension of such $\text{Span}_\R \{ X,\,H,\,Y\}$-modules. Then it follows that $\sum_{i=1}^s t_{d_{i}} d_i \,=\,{\dim}_\D V\,=\, n\, .$ Thus, 
\begin{equation}\label{partition-symbol}
\d \,:=\, \big[d_1^{t_{d_1} },\, \ldots ,\, d_s^{t_{d_s} }\big] \,\in\, \PC(n)\, .
\end{equation}
Recall that for a partition $\d\in \PC(n)$, $\N_\d$, $\E_\d $ and $\O_\d $ are defined in \eqref{Nd-Ed-Od}.

\begin{proposition}[{\cite[Proposition A.2]{BCM}}]  \label{J-basis}
	Let $\{X,H,Y\} \subset \s\l (V) $ be a $\s\l_2$-triple, where $V$ is a right $\D$-vector space of dimension $n$. 
	For all $d \,\in \,\N_\d$  and for any $\D$-basis of 
	$L(d -1)$, say, $\{ v^d_j  \,\mid\,  1 \,\leq\, j \,\leq\, t_d\,:=\, \dim_\D L(d-1) \}$ the following two hold:
	\begin{enumerate}
		\item $ X^d v^d_j \,=\, 0$ and  $H(X^l v^d_j)\,= \, X^lv^d_j(2l + 1 - d) $ for $ 1\,\leq\, j \,\leq\, t_d$,
		$0 \,\leq\, l \,\leq\, d-1$, $d \,\in\, \N_\d$.
		
		\item  For all $d \,\in \,\N_\d$, the set $\{ X^lv^d_j \,\mid\,  1\,\leq\, j \,\leq\, t_d,\  0 \,\leq\, l
		\,\leq\, d-1 \}$ is a $\D$-basis of $M(d-1)$. In particular, 
		$\{ X^l v^d_j \,\mid  \, 1\,\leq\, j \,\leq\, t_d, \  0 \,\leq\, l \,\leq\, d-1,\ d \,\in\, \N_\d \}$
		is a $\D$-basis of $V$. 
	\end{enumerate}
\end{proposition}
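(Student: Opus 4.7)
The plan is to reduce both parts of the proposition to the standard representation theory of $\s\l_2(\R)$ on the isotypical component $M(d-1)$, keeping careful track of how the $\D$-structure interacts with the $\R$-linear $\s\l_2(\R)$-action. The central observation is that since $X, H, Y$ are $\D$-linear, both $\ker Y$ and $M(d-1)$ are $\D$-subspaces of $V$, and hence so is $L(d-1) = \ker Y \cap M(d-1)$; in particular choosing a $\D$-basis of $L(d-1)$ makes sense.

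For part (1), the relation $X^d v^d_j = 0$ is immediate because $M(d-1)$ is a sum of irreducible $\text{Span}_\R\{X, H, Y\}$-submodules each of $\R$-dimension $d$, on each of which $X$ is nilpotent of index at most $d$. For the weight identity, any element of $\ker Y \cap M(d-1)$ must be a sum of lowest weight vectors from the irreducible summands of $M(d-1)$, each of which has weight $-(d-1)$; hence $H v^d_j = -(d-1) v^d_j$. The standard commutation $[H, X^l] = 2l X^l$ (induction from $[H, X] = 2X$) then yields $H(X^l v^d_j) = (2l + 1 - d)\, X^l v^d_j$.

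For part (2), I would first establish $\D$-linear independence and then finish by a dimension count. Given a relation $\sum_{j, l} X^l v^d_j \alpha_{j, l} = 0$ with $\alpha_{j, l} \in \D$, part (1) shows that distinct $l$'s produce eigenvectors of $H$ with distinct eigenvalues, so applying $H$ separates the relation into pieces $\sum_j X^l v^d_j \alpha_{j, l} = 0$ for each fixed $l$. Invoking the standard $\s\l_2(\R)$-identity $Y^l X^l w = l!\,(d-1)(d-2) \cdots (d-l)\, w$ for a lowest weight vector $w$ in a $d$-dimensional irreducible (and extending $\R$-linearly across the summands of $M(d-1)$), the coefficient is a nonzero real scalar whenever $0 \leq l \leq d-1$. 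Applying $Y^l$ to the $l$-piece therefore reduces it to a $\D$-linear relation among $\{v^d_j\}$ in $L(d-1)$, forcing every $\alpha_{j, l}$ to vanish. Spanning is then handled by a dimension count: the decomposition of $M(d-1)$ into $\R$-irreducibles each of $\R$-dimension $d$ gives $\dim_\R M(d-1) = d \cdot \dim_\R L(d-1)$, and dividing by $\dim_\R \D$ yields $\dim_\D M(d-1) = d\, t_d$, matching the cardinality of our independent set. The global statement follows from $V = \bigoplus_{d \in \N_\d} M(d-1)$, which is a $\D$-direct sum since each summand is a $\D$-subspace.

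The only real subtlety is the case $\D = \H$, where scalar multiplication is noncommutative; however, $X, H, Y$ being $\H$-linear as operators means the right $\H$-action commutes with the $\s\l_2(\R)$-action at the operator level, so the dimension arithmetic and the manipulations above carry through verbatim. I do not expect a serious obstacle here: once $L(d-1)$ is recognized as a $\D$-subspace, the statement reduces to routine bookkeeping on top of the classical theory of finite-dimensional $\s\l_2(\R)$-representations.
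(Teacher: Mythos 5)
Your argument is correct and is essentially the standard proof of this fact: the paper itself gives no proof here but cites \cite[Proposition A.2]{BCM}, and the route taken there is the same reduction to classical $\s\l_2(\R)$-theory on each isotypical component --- identifying $L(d-1)$ as the $\D$-subspace of lowest weight vectors, separating a putative relation by $H$-eigenvalues, killing each piece with the nonzero real constant in $Y^lX^l w = l!\,(d-1)\cdots(d-l)\,w$, and finishing by the dimension count $\dim_\D M(d-1)=d\,t_d$. Your explicit remark that the real eigenvalues $2l+1-d$ are central in $\H$, so that the eigenspace splitting and the scalar cancellation survive noncommutativity, is exactly the point that makes the classical argument carry over to $\D=\H$.
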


\begin{proposition}[{\cite[Proposition A.6]{BCM}}]\label{unitary-J-basis}
Let $V$ be a right $\D$-vector space of dimension $n$, $\epsilon \,= \,\pm 1$, $\sigma \,\colon\,  \D \,\longrightarrow\, \D$ is either the identity map or it is $\sigma_c$ when $\D$ is $\C$ or $\H$.  Let $\langle \cdot,\, \cdot \rangle \,:\, V \times V \,\longrightarrow\, \D$ be a $\epsilon$-$\sigma$ Hermitian form. Let $\{X,\,H,\,Y\}\,\subset\, \s\u (V,\, \langle \cdot,\, \cdot \rangle ) $ be a $\s\l_2$-triple.  
Let $d \,\in\, \N_\d$ and $t_{d}\,:= \,\dim_\D L(d-1)$.  Then for all $d \,\in\, \N_\d$, there exists a $\D$-basis $\{ {v^d_j}\,\mid \, 1\,\leq\, j \,\leq \,t_d \}$ of  $L(d -1)$ such  that the following three hold:
\begin{enumerate}
\item $X^d v^d_j \,=\, 0$ and $H(X^l v^d_j)\,=\,  X^l v^d_j(2l + 1 - d)$ for all
		$1\,\leq\, j \,\leq\, t_d$, $0 \,\leq\, l \,\leq\, d-1$ and $d \,\in\, \N_\d$.
		
\item  For all $d \,\in\, \N_\d$, the set $\{ X^lv^d_j\,\mid\,  1\,\leq\, j \,\leq\, t_d,\ 0 \,\leq\, l
		\,\leq\, d-1 \}$ is a $\D$-basis of $M(d-1)$. In particular, 
		$\{ X^l v^d_j \,\mid\, 1\,\leq\, j \,\leq\, t_d,\  0 \,\leq\, l \,\leq\, d-1,\ d \,\in\, \N_\d\}$
		is a $\D$-basis of $V$. 
		
		\item  The value of $\langle \cdot  , \,\cdot \rangle$ on any pair of the above basis vectors is $0$, except in the
		following cases: 
\begin{itemize} 
    \item  If $\sigma \,=\, \sigma_c $, then $ \langle X^l v^d_j,\, X^{d -1 -l} v^d_j \rangle \,\in\, \D^*$.
			
			\item If $\sigma \,=\, {\rm Id}$ and $\epsilon \,=\,1$, then $\langle X^l v^d_j ,\, X^{d -1-l} v^d_j \rangle \,\in\, \D^*$ 
			when $d \,\in\, \O_\d$, and 
			$\langle X^l v^d_j ,\, X^{d-1 -l} v^d_{j+1} \rangle \,\in\, \D^* $ when $ d \,\in\,\E_\d $ and $j$ is odd. 
			
			\item If $\sigma \,= \,{\rm Id}$ and $\epsilon \,= \,-1$, then
			$\langle X^l v^d_j ,\, X^{d -1-l} v^d_j \rangle \,\in\, \D^*$ when $d \in \E_\d$, and  $\langle X^l v^d_j ,\, X^{d-1 -l} v^d_{j+1} \rangle \,\in\, \D^* $ when $d\,\in\,\O_\d $ and $j$ is odd.
		\end{itemize}
	\end{enumerate}
\end{proposition}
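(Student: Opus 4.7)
Items (1) and (2) are exactly the content of Proposition~\ref{J-basis}, so I will start from any $\D$-basis $\{v^d_j\}_{1\le j\le t_d}$ of $L(d-1)$ satisfying (1) and (2), and then adjust the basis inside each $L(d-1)$ separately so as to realise the orthogonality pattern of (3) with respect to $\langle\cdot,\cdot\rangle|_{M(d-1)\times M(d-1)}$. The running input throughout is that $\{X,H,Y\}\subset\u(V,\langle\cdot,\cdot\rangle)$, so each of $X$, $H$, $Y$ is skew-adjoint for $\langle\cdot,\cdot\rangle$.

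The first step will reduce $\langle\cdot,\cdot\rangle|_{M(d-1)\times M(d'-1)}$ to a form on $L(d-1)$. Skew-adjointness of $H$ forces $(\lambda+\mu)\langle v,u\rangle=0$ on pairs of $H$-weight vectors, which together with (1) makes $\langle X^{l_1}v^d_j,X^{l_2}v^{d'}_k\rangle=0$ unless $2(l_1+l_2)+2=d+d'$. Skew-adjointness of $X$ gives $\langle X^{l_1}v^d_j,X^{l_2}v^{d'}_k\rangle=(-1)^{l_1}\langle v^d_j,X^{l_1+l_2}v^{d'}_k\rangle$, so combining with $X^{d'}v^{d'}_k=0$ (equivalently, Schur's lemma for $\s\l_2(\R)$-modules applied to the equivariant map $V\to V^{*,\sigma}$ induced by $\langle\cdot,\cdot\rangle$) forces $\langle M(d-1),M(d'-1)\rangle=0$ whenever $d\neq d'$, and within a single $M(d-1)$ only pairings with $l_1+l_2=d-1$ survive. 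The identity $\langle X^lv^d_j,X^{d-1-l}v^d_k\rangle=(-1)^l\langle v^d_j,X^{d-1}v^d_k\rangle$ shows that every surviving pairing is controlled by
$$\varphi_d\colon L(d-1)\times L(d-1)\longrightarrow\D,\qquad \varphi_d(v,w)\,:=\,\langle v,X^{d-1}w\rangle,$$
which is non-degenerate because $\langle\cdot,\cdot\rangle$ is. A short calculation using the identity above and the $\epsilon$-$\sigma$ Hermitian symmetry of $\langle\cdot,\cdot\rangle$ yields $\varphi_d(w,v)=(-1)^{d-1}\epsilon\,\sigma(\varphi_d(v,w))$, so $\varphi_d$ is a non-degenerate $(-1)^{d-1}\epsilon$-$\sigma$ Hermitian form on the finite-dimensional $\D$-space $L(d-1)$.

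The second step applies a classical structure theorem to $\varphi_d$. When $\varphi_d$ is of Hermitian type --- either $\sigma=\sigma_c$, or $\sigma=\mathrm{Id}$ with $(-1)^{d-1}\epsilon=+1$ --- I will pick an orthogonal basis $\{v^d_j\}$ of $L(d-1)$ so that $\varphi_d(v^d_j,v^d_j)\in\D^*$ and $\varphi_d(v^d_j,v^d_k)=0$ for $j\neq k$. When $\varphi_d$ is skew-symmetric, that is $\sigma=\mathrm{Id}$ with $(-1)^{d-1}\epsilon=-1$, I will pick a symplectic basis $v^d_1,v^d_2,\ldots,v^d_{t_d-1},v^d_{t_d}$ --- non-degeneracy forces $t_d$ to be even --- so that $\varphi_d(v^d_j,v^d_{j+1})\in\D^*$ for odd $j$ and all other pairings vanish. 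Transporting back through $\langle X^lv^d_j,X^{d-1-l}v^d_k\rangle=(-1)^l\varphi_d(v^d_j,v^d_k)$ and splitting on $(\sigma,\epsilon)$ against $d\in\O_\d$ versus $d\in\E_\d$ will then reproduce precisely the three bulleted cases of (3). The principal obstacle is pure sign and index bookkeeping: keeping the sign $(-1)^{d-1}\epsilon$ straight under the choices of $(\sigma,\epsilon)$, and verifying that the symplectic basis can be ordered so as to pair $v^d_j$ with $v^d_{j+1}$ for $j$ odd rather than in some other indexing. Once this case-matching is made the argument reduces to linear algebra applied one isotypic component at a time.
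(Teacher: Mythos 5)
The paper does not prove this proposition; it is quoted verbatim from \cite[Proposition A.6]{BCM}, and your outline is essentially the argument given there: use skew-adjointness of $H$ and $X$ to kill all pairings except those between $X^l L(d-1)$ and $X^{d-1-l}L(d-1)$ inside a single isotypic component, reduce everything to the induced form $\varphi_d(v,w)=\langle v, X^{d-1}w\rangle$ on $L(d-1)$, compute that it is $(-1)^{d-1}\epsilon$-$\sigma$ Hermitian, and invoke the classification of non-degenerate $\epsilon$-$\sigma$ Hermitian forms over $\D$ to choose an orthogonal or symplectic basis as the parity of $d$ dictates. Your sketch is correct and complete in outline (the sign computation $\varphi_d(w,v)=(-1)^{d-1}\epsilon\,\sigma(\varphi_d(v,w))$ and the case-matching against the three bullets both check out), so there is nothing to flag beyond the routine bookkeeping you already identify.
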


In the following remark, we will  simplify the basis of $V$  as in Proposition \ref{unitary-J-basis} case by case which will be used later. 
We  need to consider only the following cases:

\begin{remark}[{\cite[Remark A.8]{BCM}}]\label{unitary-J-basis-rmk}	
	The $\D$-basis elements  $\{ v^d_j \,\mid\,  1\,\leq\, j\, \leq\, t_d \}$ of $L(d-1)$ in Proposition \ref{unitary-J-basis}	are modified as follows:
	\begin{enumerate} 
		\item 
		If $\D\,=\,\R$ and  $\epsilon \,=\,1$, by suitable rescaling each element of
		$\{ v^d_j \,\mid\,  1\,\leq\, j\, \leq\, t_d \}$ we may assume that 
		\begin{itemize}
			\item  $\langle v^d_j,\, X^{d-1} v^d_j \rangle\,=\, \pm 1$ when
			$d \,\in\, \O_\d$, and
	
\item $\langle v^d_j,\, X^{d-1} v^d_{t_d/2 +j} \rangle \,=\, 1 $ when
$d \,\in\, \E_\d$ and $1\,\leq\, j\,\leq\, t_d/2$.
	\end{itemize}
		If $\D\,=\,\R$ and  $\epsilon \,=\,-1$, analogously we may assume that the elements of
		the $\R$-basis  $\{ v^d_j \,\mid\,  1\,\leq\, j \,\leq \,t_d \}$ of $L(d-1)$ in Proposition \ref{unitary-J-basis}
		satisfy the condition that 
		\begin{itemize}
			\item $\langle v^d_j,\, X^{d-1} v^d_j \rangle \,=\, \pm 1$ when
			$d \,\in \,\E_\d$, and
			
			\item $\langle v^d_j,\, X^{d-1} v^d_{t_d/2 +j} \rangle \,=\, 1 $ when
			$d \,\in\, \O_\d$ and $1\,\leq\, j\,\leq\, t_d/2$.
		\end{itemize}
		
\item 
If $\D\,=\,\C$, $\epsilon \,=\,1$ and $ \sigma \,=\,\sigma_c$, rescaling the elements of the $\C$-basis
    $\{ v^d_j \,\mid \, 1\,\leq\, j \,\leq\, t_d \}$ we may assume that
\begin{itemize}
\item $\langle v^d_j ,\, X^{d-1} v^d_j \rangle\,=\, \pm 1$ when 
			$d\,\in\, \O_\d$, and
			
    \item $\langle v^d_j ,\, X^{d-1} v^d_j \rangle \,=\, \pm \sqrt{-1}$ when $d \,\in\,  \E_\d$.
		\end{itemize}
		
		\item 
		If $\D\,=\, \H$, $\epsilon\, =\,1$ and $\sigma \,=\, \sigma_c$, after rescaling and conjugating 
		the elements of the $\H$-basis $\{ v^d_j \,\mid\,  1\,\leq\, j \,\leq\, t_d \}$ of $L(d-1)$
		by suitable scalars the elements of the $\H$-basis, we may assume that 
		\begin{itemize}
			\item $\langle v^d_j,\, X^{d-1} v^d_j \rangle\,=\, \pm 1$ when $d \,\in\, \O_\d$, and
			
			\item $\langle v^d_j,\, X^{d-1} v^d_j \rangle\,=\, \jb$ when $d \,\in\, \E_\d$.
		\end{itemize} 
		If $\D\,=\,\H$, $\epsilon \,=\, -1$ and $\sigma  \,=\, \sigma_c$, analogously we may assume that the elements of
		the $\H$-basis $\{ v^d_j \,\mid\,  1\,\leq\, j \,\leq \,t_d \}$ of $L(d-1)$
		satisfy
		\begin{itemize}
			\item $\langle v^d_j, \,X^{d-1} v^d_j \rangle\,=\, \pm 1$ when $d \,\in\, \E_\d$, and
			
			\item $\langle v^d_j,\, X^{d-1} v^d_j \rangle\,= \,\jb$ when $d \,\in\, \O_\d$.   \qed
		\end{itemize}
	\end{enumerate}
\end{remark}

The following  basic result will be used to prove Theorem \ref{thm-spnC-st-real}.

\begin{lemma}[{cf.~\cite[Lemma 2.6]{BCM}}] \label{comm-XH} 
	Let $\{X,\,H,\,Y\}$ be a $\s\l_2$-triple in the Lie algebra $\g$
	of a Lie group $G$. Then $Z_{G}(X,H) \,= \,Z_{G}(X,H,Y)$.
\end{lemma}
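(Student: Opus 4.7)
The inclusion $Z_G(X,H,Y) \subseteq Z_G(X,H)$ is immediate from the definition, so the content of the lemma is the reverse inclusion. My plan is to fix $g \in Z_G(X,H)$ and show that $Y' := \operatorname{Ad}(g)Y$ must coincide with $Y$, which will give $g \in Z_G(Y)$ and complete the proof.

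First I would observe that since $\operatorname{Ad}(g)$ is a Lie algebra automorphism fixing $X$ and $H$, applying it to the bracket relations $[H,Y] = -2Y$ and $[X,Y] = H$ shows that $\{X, H, Y'\}$ is again an $\s\l_2$-triple with the same $X$ and $H$. Consequently the difference $Z := Y' - Y$ satisfies the two conditions
\begin{equation*}
[H, Z] = -2Z \qquad \text{and} \qquad [X, Z] = 0.
\end{equation*}
In other words, $Z$ lies simultaneously in the $(-2)$-weight space of $\operatorname{ad}(H)$ and in $\ker \operatorname{ad}(X)$ when we view $\g$ as a module over the copy of $\s\l_2(\R)$ spanned by $\{X, H, Y\}$.

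Next I would invoke the finite-dimensional representation theory of $\s\l_2$: decompose $\g$ into a direct sum of irreducible $\text{Span}_\R\{X,H,Y\}$-submodules. In each irreducible summand of highest weight $n \geq 0$, the kernel of $\operatorname{ad}(X)$ is precisely the one-dimensional highest weight subspace, which sits in weight $n \geq 0$. In particular, no nonzero vector in $\ker\operatorname{ad}(X)$ can have $\operatorname{ad}(H)$-weight equal to $-2$. Summing over the irreducible components, any element of $\g$ lying in both $\ker\operatorname{ad}(X)$ and the $(-2)$-weight space of $\operatorname{ad}(H)$ must vanish. Hence $Z = 0$, so $\operatorname{Ad}(g)Y = Y$ and $g \in Z_G(Y)$.

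No serious obstacle is expected; the only point requiring a little care is the weight-space argument, which is standard once one writes $\g$ as an $\s\l_2$-module via the adjoint action of the triple $\{X,H,Y\}$. Since $G$ does not itself need to be semisimple for the argument (we only use that $\operatorname{Ad}(g)$ is a Lie algebra automorphism), the statement follows in the stated generality.
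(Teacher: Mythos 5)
Your argument is correct: the reduction to showing $\operatorname{Ad}(g)Y=Y$, the observation that $Z=\operatorname{Ad}(g)Y-Y$ lies in $\ker\operatorname{ad}(X)$ with $\operatorname{ad}(H)$-weight $-2$, and the appeal to complete reducibility of $\g$ as a module over $\mathrm{Span}_\R\{X,H,Y\}\cong\s\l_2(\R)$ (so that $\ker\operatorname{ad}(X)$ meets only nonnegative weights) is exactly the standard proof. The paper itself gives no proof, citing \cite[Lemma~2.6]{BCM}, and the cited argument is this same one, so there is nothing to add.
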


Next, we will state a well-known result of Springer-Steinberg regarding the reductive part of the  centralizer of a nilpotent element in the Lie group $ {\rm Sp}(n,\C) $.
\begin{theorem}[{Springer-Steinberg, cf.~\cite[Theorem 6.1.3]{CoMc}}] \label{thm-springer-steinberg}
Let $X $ be a nilpotent element in $ \s\p(n,\C) $ which corresponds to the partition $ \d=[d_1^{t_{d_1}}, \ldots, d_s^{t_{d_s}}] \in \PC_{-1}(2n)$.  Let $ \{X,H,Y\} $ be a $ \s\l_2 $-triple in $ \s\p(n,\C) $ containing $ X $. Then
	$$
Z_{{\rm Sp}(n,\C)}(X,H,Y) \,\cong\,   \prod_{\eta \in \E_\d} {\rm O}(t_\eta) \times  \prod_{\th \in \O_\d} {\rm Sp}(t_\th, \C)	\,. 
	$$ 
\end{theorem}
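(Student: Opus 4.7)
The plan is to analyze the centralizer by exploiting the $\mathfrak{sl}_2$-module structure of $V=\mathbb{C}^{2n}$ coming from $\{X,H,Y\}$ together with the symplectic form $\langle\cdot,\cdot\rangle$ (which is $\epsilon$-$\sigma$ Hermitian with $\epsilon=-1$, $\sigma={\rm Id}$). Using Proposition \ref{J-basis} and Proposition \ref{unitary-J-basis}, choose a $\mathbb{C}$-basis $\{v^d_j\}$ of each $L(d-1)=\ker Y\cap M(d-1)$ as in Proposition \ref{unitary-J-basis}, so that $\{X^l v^d_j\mid 1\le j\le t_d,\ 0\le l\le d-1,\ d\in\mathcal{N}_\delta\}$ is a basis of $V$ compatible with the symplectic form.

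The first step is to identify $Z_{{\rm Sp}(n,\mathbb{C})}(X,H,Y)$ with a group of linear maps on $\bigoplus_d L(d-1)$. For $g\in Z_G(X,H,Y)$, commuting with $H$ forces $g$ to preserve each isotypical component $M(d-1)$, and commuting with $Y$ forces $g$ to preserve $L(d-1)$; conversely, commutativity with $X$ means that the restriction $g|_{L(d-1)}$ determines $g|_{M(d-1)}$ by the formula $g(X^l v)=X^l g(v)$. Thus the assignment $g\mapsto (g|_{L(d-1)})_{d\in\mathcal{N}_\delta}$ is an injective homomorphism into $\prod_{d\in\mathcal{N}_\delta} {\rm GL}(L(d-1))$.

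The second step is to pin down which linear maps on $L(d-1)$ arise. Define the bilinear form $B_d$ on $L(d-1)$ by $B_d(u,w):=\langle u, X^{d-1}w\rangle$. Since $X\in\mathfrak{sp}(n,\mathbb{C})$, one has $\langle X^k a,b\rangle=(-1)^k\langle a,X^k b\rangle$ for all $a,b\in V$, so combining this with the skew-symmetry $\langle a,b\rangle=-\langle b,a\rangle$ yields
\begin{equation*}
B_d(u,w)=\langle u,X^{d-1}w\rangle=-\langle X^{d-1}w,u\rangle=(-1)^d\,B_d(w,u).
\end{equation*}
Hence $B_d$ is symmetric when $d\in\mathcal{E}_\delta$ and skew-symmetric when $d\in\mathcal{O}_\delta$. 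Part (3) of Proposition \ref{unitary-J-basis} (with $\epsilon=-1$, $\sigma={\rm Id}$) shows that $B_d$ is non-degenerate on $L(d-1)$, and the assumption $\delta\in\mathcal{P}_{-1}(2n)$ guarantees that $t_\theta$ is even for every $\theta\in\mathcal{O}_\delta$, so the skew case indeed lives on an even-dimensional space.

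The third step is to verify that $g|_{L(d-1)}$ preserves $B_d$ and that any $B_d$-preserving map extends. For $g\in Z_G(X,H,Y)$ and $u,w\in L(d-1)$,
\begin{equation*}
B_d(gu,gw)=\langle gu,X^{d-1}gw\rangle=\langle gu,gX^{d-1}w\rangle=\langle u,X^{d-1}w\rangle=B_d(u,w),
\end{equation*}
using $gX=Xg$ and $g\in{\rm Sp}(n,\mathbb{C})$. Conversely, given $\phi_d\in{\rm GL}(L(d-1))$ preserving $B_d$ for each $d$, define $g$ on $V$ by $g(X^l v):=X^l\phi_d(v)$ for $v\in L(d-1)$; one checks directly that $g$ commutes with $X,H,Y$ and preserves $\langle\cdot,\cdot\rangle$ using the orthogonality relations in part (3) of Proposition \ref{unitary-J-basis}. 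Putting these pieces together yields
\begin{equation*}
Z_{{\rm Sp}(n,\mathbb{C})}(X,H,Y)\ \cong\ \prod_{\eta\in\mathcal{E}_\delta}{\rm O}(L(\eta-1),B_\eta)\times\prod_{\theta\in\mathcal{O}_\delta}{\rm Sp}(L(\theta-1),B_\theta),
\end{equation*}
which is the asserted product. The main obstacle is the bookkeeping in the converse direction: one must verify carefully, using the precise pairing data from Proposition \ref{unitary-J-basis}, that extending an arbitrary $B_d$-preserving map from $L(d-1)$ by the rule $g(X^l v)=X^l \phi_d(v)$ really lands inside ${\rm Sp}(n,\mathbb{C})$, which relies on the vanishing of $\langle X^l v^d_j, X^{l'} v^{d'}_{j'}\rangle$ except in the recorded cases.
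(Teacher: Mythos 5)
Your argument is correct, and it is worth noting that the paper does not actually prove this statement at all: it is quoted as a classical theorem of Springer--Steinberg with a citation to \cite[Theorem 6.1.3]{CoMc}. What you have written is essentially the standard proof of that cited result, carried out with the specific bases of Propositions \ref{J-basis} and \ref{unitary-J-basis}: restriction to the lowest-weight spaces $L(d-1)$ identifies $Z_{{\rm Sp}(n,\C)}(X,H,Y)$ with the isometry group of the forms $B_d(u,w)=\langle u, X^{d-1}w\rangle$, the sign computation correctly shows $B_d$ is symmetric for $d\in\E_\d$ and alternating for $d\in\O_\d$, and the extension rule $g(X^l v)=X^l\phi_d(v)$ together with the weight-space/isotypic orthogonality from Proposition \ref{unitary-J-basis}(3) gives the converse. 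Two small points. First, your opening sentence attributes preservation of $M(d-1)$ to commutation with $H$ alone; that is not enough (a weight space can meet several isotypic components), but since your $g$ commutes with the full triple it is an $\s\l_2$-module automorphism and the conclusion stands --- also compare Lemma \ref{comm-XH}, which lets you drop the $Y$-condition once $X$ and $H$ are handled. Second, your factor ${\rm Sp}(L(\th-1),B_\th)$ is the symplectic group of a $t_\th$-dimensional space; in the paper's own convention (where ${\rm Sp}(m,\C)$ acts on $\C^{2m}$) this is ${\rm Sp}(t_\th/2,\C)$, which is how the paper itself uses the theorem in \eqref{block-matrixAii}, so the displayed statement's ``${\rm Sp}(t_\th,\C)$'' should be read in that sense; your version is the correct one.
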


Next result gives the reductive part of the  centralizer of a nilpotent element in the Lie group $ {\rm Sp}(p,q) $.
\begin{proposition}[{\cite[Proposition 3.34]{BCM2}}] \label{cor-centralizer-sl2-triple-sp-pq}
Let $ X\in \s\p(p,q) $ be a	non-zero nilpotent element, and $ \{X,H,Y\} $ be a $ \s\l_{2} $-triple in $ \s\p(p,q) $. Let the nilpotent orbit $ \OC_X$ corresponds to the signed Young diagram of signature $ (p,q) $ where $ p_\theta $ (resp. $ q_\th $) denotes the number of $ +1 $  (resp. $ -1 $) in the $ 1^{\rm st} $ column of the block of size $ t_\th \times \th $ for $ \th\in \O_\d$. Then
	$$
Z_{{\rm Sp}(p,q)}(X,H,Y) \simeq \prod_{\eta \in \E_\d} {\rm SO^*}(2t_\eta) \times  \prod_{\th \in \O_\d} {\rm Sp}(p_\th, q_\th)	\,. 
	$$
\end{proposition}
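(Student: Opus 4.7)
The statement asserts an isomorphism type for the reductive part of the centralizer of a nilpotent $X\in\s\p(p,q)$, so my plan is to reduce to the known structure theory of $\s\l_2$-modules with compatible Hermitian forms and then pattern-match against the defining forms of $\mathrm{SO}^*(2t_\eta)$ and $\mathrm{Sp}(p_\th,q_\th)$. First, by \lemref{comm-XH} we have $Z_{\mathrm{Sp}(p,q)}(X,H,Y)=Z_{\mathrm{Sp}(p,q)}(X,H)$, so it suffices to understand the centralizer of the $\s\l_2$-action $\mathrm{Span}_\R\{X,H,Y\}\simeq\s\l_2(\R)$ on $V=\H^{p+q}$ that preserves the quaternionic Hermitian form $\langle\cdot,\cdot\rangle$ defining $\mathrm{Sp}(p,q)$.

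Next I would use the isotypic decomposition $V=\bigoplus_{d\in\N_\d}M(d-1)$. Any $g\in\mathrm{Sp}(p,q)$ commuting with $H$ preserves weight spaces, and commuting with $X,Y$ as well means it preserves each $M(d-1)$ and is determined by its restriction to the highest-weight piece $L(d-1)=\ker Y\cap M(d-1)$ of $\H$-dimension $t_d$. By \propref{unitary-J-basis} and Remark~\ref{unitary-J-basis-rmk}(3) applied to $\D=\H$, $\epsilon=1$, $\sigma=\sigma_c$, I can choose an $\H$-basis $\{v_j^d\}$ of $L(d-1)$ on which the induced pairing $B^d(u,w):=\langle u,X^{d-1}w\rangle$ takes a standard form. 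Since $X\in\s\u(V,\langle\cdot,\cdot\rangle)$, one checks $B^d(u,w)=(-1)^{d-1}\sigma_c(B^d(w,u))$, so $B^d$ is $\sigma_c$-Hermitian when $d$ is odd and $\sigma_c$-skew-Hermitian when $d$ is even.

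Now comes the case analysis, which I expect to be the most delicate point. For $\th\in\O_\d$, Remark~\ref{unitary-J-basis-rmk}(3) gives $B^\th(v_j^\th,v_j^\th)=\pm1$, and the counting convention fixes exactly $p_\th$ of these to be $+1$ and $q_\th$ to be $-1$. Thus $B^\th$ is a non-degenerate Hermitian form of signature $(p_\th,q_\th)$ on the $\H$-vector space $L(\th-1)$, and the subgroup of $g$ preserving it is identified with $\mathrm{Sp}(p_\th,q_\th)$. For $\eta\in\E_\d$, the normalization gives $B^\eta(v_j^\eta,v_j^\eta)=\jb$, which is a non-degenerate $\sigma_c$-skew-Hermitian form on $L(\eta-1)$; the group of $\H$-linear automorphisms preserving such a form is by definition $\mathrm{SO}^*(2t_\eta)$ (after conjugating the form to the standard $\jb\mathrm{I}_{t_\eta}$, cf.\ Section~\ref{sec-classical-Lie-gps-Lie-algs}).

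Finally, assembling these pieces across the factors and verifying that no cross-terms appear (distinct $M(d-1)$ are mutually orthogonal in the relevant pairings by \propref{unitary-J-basis}(3)), one obtains
$$Z_{\mathrm{Sp}(p,q)}(X,H,Y)\;\simeq\;\prod_{\eta\in\E_\d}\mathrm{SO}^*(2t_\eta)\times\prod_{\th\in\O_\d}\mathrm{Sp}(p_\th,q_\th).$$
The heavy lifting, namely the explicit identification of the reductive part of the centralizer with these products, is precisely \cite[Lemma~4.4(4)]{BCM}; my proposal is essentially to invoke that lemma after having set up the matching of bases and signatures as above. The main potential pitfall is bookkeeping: making sure the signature $(p_\th,q_\th)$ of the restricted Hermitian form on $L(\th-1)$ coincides with the counts $p_\th,q_\th$ read off from the first column of the signed Young diagram block $t_\th\times\th$, which is exactly the content of Remark~\ref{rmk-signed-young-diag-unique}.
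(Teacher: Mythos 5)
Your proposal is correct and matches the paper's approach: the paper gives no independent argument but simply derives the proposition from \cite[Lemma 4.4(4)]{BCM}, which is exactly the reduction you make after setting up the isotypic decomposition and the induced forms $B^d$ on $L(d-1)$. Your identification of $B^d$ as $\sigma_c$-Hermitian of signature $(p_\th,q_\th)$ for odd $d$ and $\sigma_c$-skew-Hermitian for even $d$ (hence isometry groups ${\rm Sp}(p_\th,q_\th)$ and ${\rm SO}^*(2t_\eta)$) is a faithful account of what that cited lemma encapsulates.
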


 Next we include  another basic result which will be needed to prove Theorem \ref{thm-sl-n-R-nilpot}.
\begin{lemma}[{\cite[Lemma 4.1.1 ] {Ma}}]\label{comm-Xd-even} 
Let $ X\in \s\l_n(\R) $ be a non-zero nilpotent element so that the corresponding  partition $ \d\in  \PC_{\rm even} (n)$. Suppose that  $gX=Xg$ for some $ g\in {\rm GL}_n(\R)$. Then $ \det g >0 $.
\end{lemma}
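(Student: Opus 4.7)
The strategy is to apply the Jordan--Chevalley decomposition to $g$, reduce the problem to the semisimple part, and then analyze that by diagonalizing over $\C$. First I would write $g = g_s g_u$ where $g_s, g_u \in \mathrm{GL}_n(\R)$ are the commuting semisimple and unipotent parts of $g$. Since $g_s$ and $g_u$ are polynomials in $g$ with real coefficients, they both commute with $X$. As $g_u$ is unipotent, $\det g_u = 1$, so it suffices to show $\det g_s > 0$.

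The next step is to complexify and diagonalize. Let $V = \R^n$ and decompose $V \otimes_{\R} \C = \bigoplus_\lambda V^\C_\lambda$ into eigenspaces of $g_s$. Because $g_s$ commutes with $X$, each $V^\C_\lambda$ is $X$-stable, and the restriction $X|_{V^\C_\lambda}$ is nilpotent. The crucial observation is that the Jordan partition $\d$ of $X$ equals the disjoint union of the Jordan partitions of $X|_{V^\C_\lambda}$ taken over all eigenvalues $\lambda$ of $g_s$. Since $\d \in \PC_{\rm even}(n)$, each of these restriction partitions again consists of even parts only; consequently $n_\lambda := \dim_\C V^\C_\lambda$ is a sum of even integers and hence even for every $\lambda$.

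Finally, since $g_s$ is real, its non-real eigenvalues occur in complex conjugate pairs $\{\lambda, \bar\lambda\}$ with equal multiplicities. Computing the determinant of $g_s$ as a real endomorphism yields
\[
\det g_s \;=\; \prod_{\lambda \in \R} \lambda^{n_\lambda} \cdot \prod_{\{\lambda,\bar\lambda\}:\,\lambda \notin \R} |\lambda|^{2 n_\lambda}.
\]
Each factor $|\lambda|^{2 n_\lambda}$ is positive, and each real factor $\lambda^{n_\lambda}$ is also positive because $n_\lambda$ is even. Therefore $\det g = \det g_s > 0$.

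The main non-routine ingredient is the distribution of the Jordan type of $X$ over the $g_s$-eigenspaces, which follows from $[X, g_s] = 0$ together with the fact that the Jordan type over $\R$ coincides with that over $\C$. The remaining points -- reality of $g_s$ via polynomiality in $g$, triviality of $\det g_u$, and the parity sign count -- are entirely standard. An alternative plan would use Proposition \ref{J-basis} to write the reductive part of $Z_{\mathrm{GL}_n(\R)}(X)$ as $\prod_{d \in \N_\d} \mathrm{GL}_{t_d}(\R)$ and compute $\det g$ as $\prod_d (\det M_d)^{d}$; since every $d$ is even, positivity follows at once. The Jordan--Chevalley route is shorter and avoids setting up a Levi decomposition.
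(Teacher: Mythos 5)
Your proof is correct. Note that the paper does not actually prove this lemma itself --- it quotes it from the thesis \cite{Ma} --- so the relevant comparison is with the route the paper's own machinery suggests, namely the ordered basis $\BC$ of Section~\ref{sec-ordered-basis-V}: with respect to that basis any $g$ commuting with $X$ is block upper triangular, with each diagonal block $A_{jj}$ of size $t_{d_j}$ occurring $d_j$ times, so $\det g=\prod_j(\det A_{jj})^{d_j}>0$ as soon as every $d_j$ is even. That is essentially the ``alternative plan'' you sketch at the end (and your phrasing there is slightly off --- a general centralizing $g$ lies in the full centralizer, not its reductive part, but the unipotent radical contributes determinant $1$, so the formula $\det g=\prod_d(\det M_d)^d$ survives). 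Your main argument via the multiplicative Jordan--Chevalley decomposition is a genuinely different and fully self-contained proof: all the steps check out ($g_s,g_u$ are real, are polynomials in $g$ and hence commute with $X$; $\det g_u=1$; each $g_s$-eigenspace of $V\otimes_\R\C$ is $X$-stable, and since the Jordan type of a nilpotent operator is additive over an invariant direct-sum decomposition and is unchanged under field extension, each $\dim_\C V^\C_\lambda$ is a sum of parts of $\d$ and hence even; the determinant then factors into even powers of real eigenvalues and norms of conjugate pairs). What your route buys is independence from the $\s\l_2$-triple/Proposition~\ref{J-basis} apparatus; what the basis route buys is that it is already set up in the paper and generalizes immediately to the twisted variants used elsewhere (e.g.\ in the proofs of Theorems~\ref{thm-spnC-st-real} and~\ref{thm-sp-pq-st-real}).
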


\subsection{An ordered basis of $V$}\label{sec-ordered-basis-V}
Here we will fix a suitable ordering of the basis as described in Propositions \ref{J-basis}, \ref{unitary-J-basis} of the underlying vector space $ V$. This ordering is motivated by a similar ordering used in \cite[(4.4), p. 738]{BCM}. After starting with their initial ordering, we will  modify it to serve our purpose. 
In the following example we will construct an explicit ordered basis of the underlying vector space and express any linear map $ g $  satisfying $\epsilon X=g X g^{-1}$ where $ \epsilon =\pm1 $ with respect to this ordered basis.

\begin{example}
Let $ X\in \s\l(7,\C) $ be a nilpotent element which corresponds to the partition $ [3, 2^2] $ of $ 7$. There exists $ v_1,v_2,v_3\in\C^7 $ such that the vector space $ \C^7 $ has a basis of the form $ \{v_1, Xv_1, X^2v_1\,; \,v_2, Xv_2\,; \,v_3, Xv_3 \} $. We now re-ordered the above basis as follows:
\begin{align}\label{basis-B-exmp}
\BC:=\{ X^2v_1\,;\,  Xv_2, Xv_3\,;\, Xv_1\,;\, v_2, v_3\,;\, v_1  \} \,. 
	\end{align}
	If  $g\in {\rm GL}(\C^7)$ be so that  $\epsilon X=g X g^{-1}$ where $ \epsilon =\pm1 $. Note that  the linear map 
	 $ g$ is completely determined by it's action on $ v_r$ for $ r=1,2,3 $. Write $ g (v_r)\,=\, \sum_{i,\,l} c^l_{i\,r} X^lv_i$ for $ r=1,2,3 $, where $ c^l_{i\,l} \in \C$. i.e.,  
	\begin{align*}
		&~ g (v_1)\,=  \, c^2_{1\,1}X^2v_1 \, + \,  c^1_{2\,1} Xv_2 \,  + \,  c^1_{3\,1} Xv_3 \, + \, c^1_{1\,1} Xv_1 \, +\,  c^0_{2\,1} v_2 \, + \,  c^1_{3\,1}  v_3\, + \, c^0_{1\,1} v_1\,,\\
		& ~ g (v_2)\,=\, c^2_{1\,2}X^2v_1 \, + \,  c^1_{2\,2} Xv_2 \,  + \,  c^1_{3\,2} Xv_3 \, + \, c^1_{1\,2} Xv_1 \, +\,  c^0_{2\,2} v_2 \, + \,  c^1_{3\,2}  v_3\, + \, \magenta{c^0_{1\,2}} v_1\,,\\
		& ~ g (v_3)\,=\, c^2_{1\,3}X^2v_1 \, + \,  c^1_{2\,3} Xv_2 \,  + \,  c^1_{3\,3} Xv_3 \, + \, c^1_{1\,3} Xv_1 \, +\,  c^0_{2\,3} v_2 \, + \,  c^1_{3\,3}  v_3\, + \, \magenta{c^0_{1\,3}} v_1\,,\\
	\end{align*}
Now using the relations $ \epsilon X =g X g^{-1}$, and $ X^3v_1=0,\,X^2v_i=0,\, i=2,3$; it follows that 
\begin{itemize}
		\item $~ g (X v_1)\,= \, \epsilon X g (v_1) \, = \epsilon  (\, c^1_{1\,1} X^2v_1 \, +\,  c^0_{2\,1} Xv_2 \, + \,  c^1_{3\,1}  Xv_3\, + \, c^0_{1\,1} Xv_1)$\,,
		\item $~ g (X v_2)\,=\, \epsilon X g (v_2) \, = \epsilon (\, c^1_{1\,2} X^2v_1 \, +\,  c^0_{2\,2} Xv_2 \, + \,  c^1_{3\,2}  Xv_3\, + \, \magenta{c^0_{1\,2}} Xv_1)$\,,
		\item $~ g (X v_3)\,= \,\epsilon  X g (v_3) \, = \epsilon  (\, c^1_{1\,3} X^2v_1 \, +\,  c^0_{2\,3} Xv_2 \, + \,  c^1_{3\,3}  Xv_3\, + \, \magenta{c^0_{1\,3}} Xv_1)$\,,
		
		\item $~ g (X^2 v_1)\,=\, c^0_{1\,1} X^2v_1$.
	\end{itemize}
	Since $g (X^{2} v_2) = g (X^{2} v_3) =0$,  we have $ \magenta{ c^0_{1\,2}  = c^0_{1\,3} =0 }$. 
	Observe that 
	the matrix $ [g]_\BC $ is of the form :
	$$
	[g]_\BC\,=\,
	\begin{pmatrix}
		\blue{c^0_{1\,1}} & \epsilon c^1_{1\,2}  & \epsilon c^1_{1\,3} &  \epsilon c^1_{1\,1} &   c^2_{1\,2} &    c^2_{1\,3} & c^2_{1\,1}     
		\\
		& \violet{ \epsilon c^0_{2\,2}} & \violet{ \epsilon c^0_{2\,3}} & \epsilon c^0_{2\,1} &  c^1_{2\,2}  & c^1_{2\,3}  & c^1_{2\,1}  
		\\
		&\violet{ \epsilon c^0_{3\,2}}& \violet{ \epsilon c^0_{3\,3}}& \epsilon c^0_{3\,1} & c^1_{3\,2}  & c^1_{3\,3}  &  c^1_{3\,1} 
		\\  
		&   & &  \blue{\epsilon c^0_{1\,1}}  & c^2_{1\,2} & c^1_{1\,2} & c^1_{1\,1}    
		\\  
		& &  &   &\violet{  c^0_{2\,2}}  & \violet{ c^0_{2\,3}} & c^0_{2\,1}
		\\
		&  &   &   &\violet{  c^0_{3\,2}} & \violet{ c^0_{3\,3}} & c^0_{3\,1}
		\\
		& &  &   &   &  &  \blue{c^0_{1\,1}} 	  
	\end{pmatrix} \,.$$
	\qed
\end{example}

Now we will consider the general case. 
Let $(v^d_1,\, \ldots ,\, v^d_{t_d})$  be the ordered $\D$-basis of $L(d-1)$ as in Proposition \ref{unitary-J-basis}  for $d \,\in\, \N_\d$.  Then, as done in  \cite[(4.3)]{BCM}, it follows from Proposition \ref{unitary-J-basis} that $\BC^l (d) \,:=\, (X^l v^d_1,\, \ldots ,\,X^l v^d_{t_d})$ is an ordered $\D$-basis of $X^l L(d-1)$ for $0\,\leq\, l \,\leq\, d-1$ with $d\,\in\, \N_\d$. 
Define
\begin{equation}\label{old-ordered-basis}
\BC(j) \,:=\, \BC^{d_1-j} (d_1) \vee    \cdots \vee  \BC^{d_s-j} (d_s)  ,\ \text{ and }\  \BC\,:=\, \BC(1) \vee \cdots \vee  \BC(d_1)\, .
\end{equation}
Then $\BC$ is an ordered basis of  $V$.  Note that
$$
\BC(1)=(X^{d_1-1} v_1^{d_1}, \ldots ,\,X^{d_1-1}v^{d_1}_{t_{d_1}}; X^{d_2-1} v_1^{d_2}, \ldots,\,X^{d_2-1}v^{d_2}_{t_{d_2}};\dots\ldots ; X^{d_s-1} v_1^{d_s}, \ldots ,\,X^{d_s-1}v^{d_s}_{t_{d_s}}),
$$
$$
\BC(d_1)=(v_1^{d_1},\, \ldots ,\, v^{d_1}_{t_{d_1}})  .
$$ 
Note that $X^{d_i} v_j^{d_i} \,=\, 0 $ for $ i=1,\dots, s;\, j=1, \dots, t_{d_i}$.  
Thus $X|_{{\rm Span}\BC(1)}=0$.   Suppose $\tau \in {\rm GL}(V)$  so that  $ \tau X \tau^{-1} =\epsilon  X $, where $\epsilon = \pm1$. Since $ \tau X^n\,=\, \epsilon^n X^n\tau $ for all $ n\in \N $, the  following flag
$$\{0\} \, \subset\, \ker X \, \subset \, \ker X^2 \, \subset \, \cdots \, \subset \ker X^{d_1} =V\,$$
 remains invariant under $ \tau $. Note that $\ker X^t = {\rm Span}\{\BC(1)\vee \cdots \vee \BC(t) \}  $  for $t=1,\ldots , d_1$. 	Following the above notation, we conclude that 
	\begin{enumerate}
		\item  $ \tau  \big({\rm Span}\{\BC^{d_1-1}(d_1)\vee \cdots\vee \BC^{d_i-1}(d_i) \}\big) \,\subseteq \, {\rm Span}\{\BC^{d_1-1}(d_1)\vee \cdots\vee \BC^{d_i-1}(d_i) \} $\\  for $ i=1,\ldots, s $\,;
		\item $ \tau \big( {\rm Span}\{\BC(1)\vee \cdots \vee \BC(t) \}\big) \,\subseteq \, {\rm Span}\{\BC(1)\vee \cdots \vee \BC(t) \}$ for $t=1,\ldots , d_1$. 
	\end{enumerate}

Also,  $\tau $ will be determined uniquely by it's action on $ L(d-1) $ for $ d\in \N_\d $.
The matrix  $[\tau]_\BC$ is a block upper triangular matrix with $ (d_1+\cdots +d_s) $-many diagonal blocks. Write $ [\tau]_\BC = (A_{ij}) $, where $A_{ij}$ are  block matrices. 
Then $ A_{ij} =0$ for $ i>j $. 
The order of  the first $s$-many diagonal blocks $A_{jj}$   of $[\tau]_\BC$, is $ t_{d_j}\times t_{d_j} $, where $ t_{d_j}=\dim L(d_j-1) $ for $j=1,\ldots , s$. 
Any diagonal block of $[\tau]_\BC$ is of the form $\epsilon A_{jj}$  for  $j=1,\ldots , s$. 
Moreover, if $\tau^2 = {\rm Id}$, then necessarily $A_{jj}^2 ={\rm Id}$ for $j=1,\ldots ,s$.

\section{Reality for nilpotent elements in simple Lie algebra over $\C$}\label{sec-reality-nilpotent-elts-complex-g}
 Using the  Jacobson-Morozov Theorem \ref{Jacobson-Morozov-alg} and Mal'cev Theorem  \cite[Theorem 3.4.12]{CoMc}, it follows that nilpotent elements are real in simple Lie algebra over $ \C $.
We will provide an elementary proof without using those results and also construct an explicit conjugating element for the nilpotent element. The following basic lemma is useful for this.
\begin{lemma}\label{X-gx}
Let $X\in \g $ be a  non-zero nilpotent element in a  semi-simple Lie algebra $\g$ over $ \C $. Then $ X\in [\g, X] $.
\end{lemma}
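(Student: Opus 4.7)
My plan is to use the Killing form of $\g$ together with the fact that $\mathrm{ad}(X)$ is a nilpotent endomorphism.

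\textbf{Step 1: Identify $[\g,X]^\perp$ via the Killing form.} Let $B$ denote the Killing form on $\g$, which is non-degenerate since $\g$ is semisimple. Using the invariance property $B([A,B],C)=B(A,[B,C])$, I would compute for any $Z,W\in\g$:
\[
B(Z,[W,X]) \;=\; -B(Z,[X,W]) \;=\; -B([Z,X],W) \;=\; B([X,Z],W).
\]
Non-degeneracy of $B$ then gives the identification
\[
[\g,X]^{\perp} \;=\; \{Z\in\g \mid [X,Z]=0\} \;=\; \z_\g(X).
\]
Because $B$ is non-degenerate we also have $([\g,X]^{\perp})^{\perp}=[\g,X]$, so membership $X\in[\g,X]$ is equivalent to $B(X,Z)=0$ for every $Z\in\z_\g(X)$.

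\textbf{Step 2: Show $X$ is $B$-orthogonal to $\z_\g(X)$.} Since $\g$ is semisimple and $X$ is nilpotent, $\mathrm{ad}(X)$ is a nilpotent endomorphism of $\g$. For any $Z\in\z_\g(X)$ the operators $\mathrm{ad}(X)$ and $\mathrm{ad}(Z)$ commute, and the product of a nilpotent operator with a commuting operator is again nilpotent: if $\mathrm{ad}(X)^{N}=0$ then $(\mathrm{ad}(X)\mathrm{ad}(Z))^{N}=\mathrm{ad}(X)^{N}\mathrm{ad}(Z)^{N}=0$. Therefore
\[
B(X,Z) \;=\; \mathrm{tr}\bigl(\mathrm{ad}(X)\,\mathrm{ad}(Z)\bigr) \;=\; 0.
\]

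\textbf{Step 3: Conclude.} Combining the two steps, $X\in\z_\g(X)^{\perp}=[\g,X]$, which is what we wanted.

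The argument is short and avoids any $\s\l_2$-triple machinery, so it fits the author's stated goal of an elementary proof. The only point to keep in mind is that in a \emph{semisimple} Lie algebra nilpotency of $X$ is the same as nilpotency of $\mathrm{ad}(X)$ (via abstract Jordan decomposition), which is the one structural input we are using; the rest is linear algebra with the Killing form. I don't expect any real obstacle here.
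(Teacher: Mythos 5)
Your proof is correct, and it is essentially the standard argument: the paper itself does not prove this lemma but simply cites Collingwood--McGovern (3.3.4), and the argument there is exactly your Killing-form computation, namely $[\g,X]^{\perp}=\z_\g(X)$ together with $\mathrm{tr}(\mathrm{ad}(X)\,\mathrm{ad}(Z))=0$ for commuting $Z$ since $\mathrm{ad}(X)\,\mathrm{ad}(Z)$ is nilpotent. Nothing is missing; your write-up is a correct self-contained version of the cited proof.
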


\begin{proof}
See \cite[$(3.3.4)$, p. 38]{CoMc} for a proof.
\end{proof}

\begin{lemma}\label{thm-real-nilp-g/C}
Every unipotent element in a complex semi-simple Lie group $G$  is real.
\end{lemma}

\begin{proof}
It is enough to prove that every nilpotent element is real in $ \g $, see Corollary \ref{cor-Ad-real-unipotent-elt}. 
Let $ X\in \g $ be a non-zero nilpotent element. Using Lemma \ref{X-gx}, we have $  X=[H,X] $ for some  $ H\in \g$. 	Let $ \alpha\in \C $ so that $ e^{\alpha}=-1 $. Then   $ [\alpha H, X]=\alpha X $ and
$$
{\rm Ad}\,(\exp \alpha H)  (X) = e^{ {\rm ad}({ \alpha H})}(X) = \sum_n \frac{\big({\rm ad}\,({\alpha H})\big)^n}{n!}X  = \sum_n \frac{\alpha ^n}{n!}X  = -X \,.
$$
Hence $ X $ and $( -X) $ are conjugate via $ \exp \alpha H$. This completes the proof.
\end{proof}

An analogous result may not true over $ \R $, see Example \ref{exp-sl2-R}.
\begin{example}\label{exp-sl2-R}
Let us consider the  nilpotent element $ X=\begin{pmatrix}
	0&1\\
	0&0
\end{pmatrix}$. A simple computation shows that $ X $ is not real in $ \s\l_2(\R) $, real in $ \s\l_2(\C) $ but not strongly real in  $ \s\l_2(\C) $, and finally, strongly real in  $ \g\l_2(\C) $.  \qed
\end{example}

 In view of the Lemma \ref{thm-real-nilp-g/C}, we want to classify the strongly real nilpotent elements in a simple Lie algebra over $ \C $ which will be done in the rest of the section.
Moreover, we will construct an explicit  element in $ G $ corresponding to the nilpotent element $ X \in \g$ which conjugates $ X $ and $ -X $.

\subsection{Nilpotent elements in $ \s\l_n(\C) $}
 First we will consider the Lie algebra $ \g\l_n(\D)$ where  $ \D=\R, \, \C,\, \H $. 

\begin{proposition}\label{Propo-glnD}
Every nilpotent element in $ \g\l_n(\D) $ is strongly real when $ \D=\R, \, \C,\, \H $.
\end{proposition}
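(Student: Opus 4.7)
The plan is to produce, for any nilpotent $X \in \g\l_n(\D)$, an explicit involution $\tau \in {\rm GL}_n(\D)$ with $\tau X \tau^{-1} = -X$, using the $\s\l_2$-adapted basis furnished by Proposition \ref{J-basis}.

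First, since every nilpotent matrix has trace (respectively reduced trace when $\D=\H$) equal to zero, $X$ lies in the semisimple Lie algebra $\s\l_n(\D)$, so the Jacobson--Morozov theorem (Theorem \ref{Jacobson-Morozov-alg}) embeds $X$ in an $\s\l_2(\R)$-triple $\{X,H,Y\} \subset \s\l_n(\D)$. Applying Proposition \ref{J-basis} then yields a $\D$-basis
\[
\BC \,=\, \{\, X^l v^d_j \mid d \in \N_\d,\ 1 \le j \le t_d,\ 0 \le l \le d-1\,\}
\]
of $V = \D^n$, in which the chain $v^d_j,\, Xv^d_j,\, \ldots,\, X^{d-1}v^d_j$ has length $d$ with $X^d v^d_j = 0$.

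Next I would define $\tau \in {\rm GL}(V)$ by setting $\tau(X^l v^d_j) := (-1)^l X^l v^d_j$ and extending by right $\D$-linearity; this is well defined because the scalars $(-1)^l$ are central in $\D$. Clearly $\tau^2 = {\rm Id}$. A direct check on basis vectors gives, for $0 \le l \le d-2$,
\[
\tau X \tau^{-1}(X^l v^d_j) \,=\, (-1)^l \tau(X^{l+1} v^d_j) \,=\, -X^{l+1} v^d_j \,=\, -X(X^l v^d_j),
\]
and both sides vanish when $l = d-1$ since $X^d v^d_j = 0$. Hence $\tau X \tau^{-1} = -X$ on all of $V$, which exhibits $X$ as strongly real in $\g\l_n(\D)$.

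Since both the Jacobson--Morozov theorem and Proposition \ref{J-basis} are formulated uniformly for $\D \in \{\R, \C, \H\}$, no case-by-case analysis is needed, and the centrality of $\pm 1$ in $\H$ ensures that the construction respects right $\D$-linearity over the quaternions as well. I do not anticipate a genuine obstacle: the substantive content of the proposition is simply the observation that alternating signs along each Jordan chain simultaneously conjugates $X$ to $-X$ and squares to the identity, and the $\s\l_2$-adapted basis packages this cleanly and uniformly across the three division algebras.
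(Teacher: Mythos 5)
Your proof is correct and follows essentially the same route as the paper: pass to $\s\l_n(\D)$, take a Jacobson--Morozov triple, and define an involution by alternating signs along each chain of the basis from Proposition \ref{J-basis}. The only difference is that the paper uses $(-1)^{l+1}$ instead of $(-1)^l$ on chains of length $d\in\O_\d^3$ (so that the same formula has determinant $1$ and can be reused in the determinant-constrained settings later), but in $\g\l_n(\D)$ this is immaterial and your uniform choice works.
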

\begin{proof} 
Let $X\in \g\l_n(\D)$ be a non-zero nilpotent element. Then $X\in \s\l_n(\D)$. Using Proposition \ref{J-basis}, $\D^n$ has a basis of the form $\{ X^l v^d_j \,\mid  \, 1\,\leq\, j \,\leq\, t_d, \  0 \,\leq\, l \,\leq\, d-1,\ d \,\in\, \N_\d \}$. Now define $g\in {\rm GL}_n(\D)$ as follows:
\begin{align}\label{defn-g-matrix}
		g(X^lv^d_j)\,=\, \begin{cases}
			(-1)^lX^lv^d_j & d\in \E_\d\cup \O_\d^1\,,\\
			 	(-1)^{l+1}X^lv^d_j & d\in \O_\d^3  \,.\\
		\end{cases}
\end{align}
Then $g^2 = {\rm Id}$ and $Xg=-gX$. This completes the proof.	
\end{proof}

Now we will consider the nilpotent elements in $\s\l_n(\C) $.  Every nilpotent element is  real in ${\s\l}_n(\C)$, see Lemma  \ref{thm-real-nilp-g/C}. In Example \ref{exp-sl2-R}, we have observed that any nilpotent element in $ \s\l_2(\C) $ is not strongly real.  In general every nilpotent element in ${\rm Lie(\, PSL}_n(\C))$  is  strongly real, but  not in $\s\l_n(\C)$.
The following terminology is needed  for the next result.
 Define  $\E_\d^2 :=\{\eta \in \E_\d\mid \eta \equiv 2\pmod 4  \}$, where $ \E_\d$ is as in \eqref{Nd-Ed-Od}, and 
 \begin{align}\label{def-Pe-n}
 \widetilde{\PC_{e}}(n)\, :=\,  \{\d\in \PC_{even}(n) \setminus\PC_{v.even}(n) \mid \sum_{\eta \in \E^2_\d} t_\eta \ {\rm is \ odd \ } \}\,. 	
 \end{align}

\begin{theorem}
 Every nilpotent element is strongly  real in ${\rm Lie(\, PSL}_n(\C))$.
\end{theorem}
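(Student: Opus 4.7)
The plan is to leverage the explicit involution constructed in Proposition \ref{Propo-glnD} for $\g\l_n(\C)$ and rescale it by a suitable scalar so that the adjusted element lies in ${\rm SL}_n(\C)$ and its image in ${\rm PSL}_n(\C) = {\rm SL}_n(\C)/Z({\rm SL}_n(\C))$ is an involution. The problem then reduces to understanding $\det g$ for the explicit $g$ produced in the general linear case, which turns out to be $\pm 1$ and can always be corrected in the projective group.

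Let $X \in \s\l_n(\C)$ be a non-zero nilpotent element with associated partition $\d = [d_1^{t_{d_1}},\ldots, d_s^{t_{d_s}}] \in \PC(n)$, and fix the basis $\{X^l v^d_j \mid 1\le j\le t_d,\ 0\le l\le d-1,\ d\in\N_\d\}$ of $\C^n$ from Proposition \ref{J-basis}. Let $g \in {\rm GL}_n(\C)$ be the involution defined by \eqref{defn-g-matrix}, so that $g^2 = {\rm Id}$ and $gXg^{-1} = -X$. In this basis $g$ is diagonal with entries $\pm 1$, so $\det g = \pm 1$. A short sign count, block by block over $d \in \N_\d$, shows that the $d$-block contributes $+1$ to $\det g$ unless $d \equiv 2 \pmod 4$, in which case it contributes $(-1)^{t_d}$; thus $\det g = (-1)^{\sum_{\eta \in \E_\d^2} t_\eta}$, which equals $+1$ except precisely when $\d \in \widetilde{\PC_{e}}(n)$.

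If $\det g = 1$, then $g$ already lies in ${\rm SL}_n(\C)$ and its class $[g] \in {\rm PSL}_n(\C)$ is an involution satisfying ${\rm Ad}([g])X = -X$. If $\det g = -1$, pick $\alpha \in \C$ with $\alpha^n = -1$, for instance $\alpha := \exp(\sqrt{-1}\,\pi/n)$, and set $\tilde g := \alpha g$. Then $\det \tilde g = \alpha^n \det g = 1$, so $\tilde g \in {\rm SL}_n(\C)$; the scalar does not affect conjugation, so $\tilde g X \tilde g^{-1} = -X$; and $\tilde g^2 = \alpha^2 \, {\rm Id}$ with $(\alpha^2)^n = \alpha^{2n} = 1$, whence $\tilde g^2$ lies in $Z({\rm SL}_n(\C))$. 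Therefore $[\tilde g] \in {\rm PSL}_n(\C)$ is an involution with ${\rm Ad}([\tilde g])X = -X$, proving that $X$ is strongly ${\rm Ad}_{{\rm PSL}_n(\C)}$-real.

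The whole computational content is borne by Proposition \ref{Propo-glnD}; passing from $\g\l_n(\C)$ to $\s\l_n(\C)$ only introduces a possible sign in the determinant of $g$, and this is always absorbed into the center of ${\rm SL}_n(\C)$ by a $2n$-th root of unity. Consequently there is no genuine obstacle: the very sign discrepancy that would force the analogous statement to fail inside $\s\l_n(\C)$ itself (compare Theorem \ref{thm-sl-n-c-nilpot}) becomes invisible after quotienting by the center.
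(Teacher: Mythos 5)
Your proof is correct and follows essentially the same route as the paper: the paper likewise takes the involution $g$ of \eqref{defn-g-matrix} and, in the bad case, rescales by a scalar ($\sqrt{-1}$ there, a $2n$-th root of $-1$ here) so that the corrected element lies in ${\rm SL}_n(\C)$ and squares to a central element, hence projects to an involution in ${\rm PSL}_n(\C)$. One harmless slip worth noting: $\det g=(-1)^{\sum_{\eta\in\E^2_\d}t_\eta}$ can equal $-1$ even when $\d\notin\widetilde{\PC_{e}}(n)$ (e.g.\ $\d=[2,1]$, which has an odd part and so is not an even partition), so your parenthetical identification of the case $\det g=-1$ with $\d\in\widetilde{\PC_{e}}(n)$ is inaccurate --- but your argument only uses the case split on the value of $\det g$, never that identification, so the proof stands.
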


\begin{proof}
Let $X\in \s\l_n(\C)$ be a non-zero nilpotent element. Using Proposition \ref{J-basis}, $\C^n$ has a basis of the form $\{ X^l v^d_j \,\mid  \, 1\,\leq\, j \,\leq\, t_d, \  0 \,\leq\, l \,\leq\, d-1,\ d \,\in\, \N_\d \}$. 
We now divide the proof in two parts:

{\it Case 1:} $\d\not\in \widetilde{\PC_{e}}(n)$. Set $g\in {\rm GL}_n(\C)$ as in \eqref{defn-g-matrix}.
Then $\det g =1$, $g^2 = {\rm Id}$,\, $Xg=-gX$.

{\it Case 2:}  $\d\in \widetilde{\PC_{e}}(n)$.  In this case $n\equiv 2 \pmod 4$. 
  Let $g\in {\rm GL_n(\C)}$ be as in \eqref{defn-g-matrix}, and $\det g =-1$. Then define $\widetilde g:= \sqrt{-1} g $.
Then $\det \widetilde g= (\sqrt{-1})^n \det g =1$,\, $\widetilde g^2 = {\rm -Id}$ and $X\widetilde g=-\widetilde gX$. This completes the proof.
\end{proof}

\begin{theorem} \label{thm-sl-n-c-nilpot}
Let $ X\in \s\l_n(\C) $ be a nilpotent element, and $ \d \in \PC(n)$ be the corresponding partition. Then $ X $   is strongly real  if and only if  the partition $ \d \not\in  \widetilde{\PC_{e}}(n)$.
\end{theorem}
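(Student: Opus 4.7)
The plan is to determine both directions via the determinant of any candidate $\tau\in {\rm SL}_n(\C)$ satisfying $\tau^2={\rm Id}$ and $\tau X\tau^{-1}=-X$, reading off the constraint from the block-upper-triangular description of such $\tau$ (with $\epsilon=-1$) in the ordered basis $\BC$ of Section~\ref{sec-ordered-basis-V}.

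For necessity, I would assume $\d\in\widetilde{\PC_{e}}(n)$ and that such an involution $\tau$ exists, and derive a contradiction. Section~\ref{sec-ordered-basis-V} gives that $[\tau]_{\BC}$ is block upper triangular with $d_1+\cdots+d_s$ diagonal blocks, where for each part $d_i$ the $d_i$ diagonal blocks contributed by the $d_i$-chain are $A_{ii},\,-A_{ii},\,A_{ii},\,\ldots,\,(-1)^{d_i-1}A_{ii}$, each of size $t_{d_i}\times t_{d_i}$, with $A_{ii}^2={\rm Id}$ forced by $\tau^2={\rm Id}$. Multiplying the determinants of these blocks gives
\begin{align*}
\det\tau\;=\;\prod_{i=1}^{s}(\det A_{ii})^{d_i}\,(-1)^{t_{d_i}\,d_i(d_i-1)/2}.
\end{align*}
Because $\d$ is even, $(\det A_{ii})^{d_i}=(\pm 1)^{d_i}=1$ for every $i$; and $d_i(d_i-1)/2$ is odd precisely when $d_i\equiv 2\pmod 4$, i.e.\ when $d_i\in \E^2_\d$. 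Hence $\det\tau=(-1)^{\sum_{d\in \E^2_\d}t_d}$, which equals $-1$ by the definition of $\widetilde{\PC_{e}}(n)$. This contradicts $\tau\in {\rm SL}_n(\C)$.

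For sufficiency, suppose $\d\notin\widetilde{\PC_{e}}(n)$. I would start with the involution $g$ from \eqref{defn-g-matrix}, which already satisfies $g^2={\rm Id}$ and $gX=-Xg$. The same block computation yields $\det g=(-1)^{\sum_{d\in \E^2_\d}t_d}$. If $\d$ is even, then the assumption $\d\notin\widetilde{\PC_{e}}(n)$ forces either $\d$ to be very even or this exponent to be even; in either case $\det g=1$ and I take $\tau=g$. If $\d$ has an odd part $d_0$, I would modify $g$ on a single copy of the $d_0$-chain (say the one indexed by $j=1$) by flipping its overall sign. This modification preserves $g^2={\rm Id}$ and $gX=-Xg$ (the chain eigenvalues remain of the form $\pm(-1)^l$), while multiplying $\det g$ by $(-1)^{d_0}=-1$; hence $\det g=1$ can always be arranged, giving $\tau\in {\rm SL}_n(\C)$.

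The main technical input is the block pattern $(-1)^{j-1}A_{ii}$ extracted from Section~\ref{sec-ordered-basis-V}; once it is in hand, everything collapses to a parity count. The delicate point is that for even $d_i$ the freedom in $A_{ii}$ (affecting $\det\tau$ only through $\det A_{ii}=\pm 1$) is invisible because $(\det A_{ii})^{d_i}=1$; this is why the obstruction $\sum_{d\in \E^2_\d}t_d\pmod 2$ cannot be neutralized in the absence of an odd part of $\d$, and this is exactly what singles out the class $\widetilde{\PC_{e}}(n)$.
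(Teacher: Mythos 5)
Your proof is correct and follows essentially the same route as the paper: the explicit involution of \eqref{defn-g-matrix} (adjusted on one odd chain when necessary) for sufficiency, and the determinant parity count on the block-upper-triangular form $[\tau]_\BC$ from Section~\ref{sec-ordered-basis-V} for necessity. Your sufficiency argument is in fact slightly more careful than the paper's, which does not explicitly address the sign correction needed when $\d$ has an odd part and $\sum_{\eta\in\E^2_\d}t_\eta$ is odd.
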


\begin{proof}
Let $ X \in \s\l_n(\C)$ be a non-zero nilpotent element, and  $ \{X,H,Y\} $ be a $\s\l_2$-triple in $ \s\l_n(\C)$ containing $ X $. Then $\C^n$ has a basis of the form $\{X^lv_i\mid i,l\} $, see Proposition \ref{J-basis}. First assume that $ \d \not\in  \widetilde{\PC_{e}}(n)$. In this case, define an involution $g$ as in \eqref{defn-g-matrix}. Since the corresponding partition $ \d\not\in \widetilde{\PC_{e}}(n) $, it follows from the construction as done in \eqref{defn-g-matrix} that $ \det g \,=\, (-1)^{\sum_{\eta \in  \E_\d^2} t_\eta}\,=\,1$.  This shows that  $ X $ is strongly real.

Next assume that  $X$ is strongly real, and $ \d\in  \widetilde{\PC_{e}}(n)$. The converse part  of this theorem will follow if we arrive a contradiction.
Let  $ \tau \in {\rm SL}_n(\C)$ be an involution so that $  \tau X  \tau^{-1} =-X$. We will write $ \tau $ with respect to the ordered basis $ \BC $ constructed in \eqref{old-ordered-basis} so that $ [ \tau ]_\BC $ is block upper triangular matrix and follow the notation of \S \ref{sec-ordered-basis-V}. Since, $ \tau^2 ={\rm Id}$,  $ A_{jj}^2={\rm Id}$ for $ 1\leq j \leq s $. It follows form the definition of the ordered basis $ \BC $, that $ A_{jj}$ and  $ -A_{jj}$ both  occur $ d_j/2 $-many times in the diagonal block for $ 1\leq j \leq s$.  This contradicts the fact that $ \det \tau=1 $, as
$$
1\,=\,\det \tau \,= \, (-1)^{\sum_{\eta \in  \E_\d^2} t_\eta}(\det A_{jj})^{d_j} \,=\,(-1)^{\sum_{\eta \in  \E_\d^2} t_\eta}\,=\,-1\,. 
$$
This completes the proof.
\end{proof}

\begin{remark}
 Similar proof will work for $\s\l_n(\R)$. But we will provide more straightforward proof in Theorem \ref{thm-sl-n-R-nilpot} for $\s\l_n(\R)$.
\end{remark}

\subsection{Nilpotent elements in $ \s\o(n,\C) $}
 Throughout this subsection $ \<> $ denotes the symmetric form on $ \C^n $ defined by $ \langle x,y \rangle = x^ty $ for $ x,y\in \C^n $.   Recall the definition of $ \PC_{1} (n)$ as in \eqref{defn-Pn1}.

\begin{theorem}
Every nilpotent element is strongly real in $ \s\o(n,\C) $ for $ n\geq 2 $. 	
\end{theorem}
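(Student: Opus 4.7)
The plan is to build, for each nilpotent $X\in\s\o(n,\C)$, an explicit involution $\tau\in \mathrm{SO}(n,\C)$ satisfying $\tau X\tau^{-1}=-X$, modelled on the construction of Proposition~\ref{Propo-glnD} but refined so that $\tau$ preserves the symmetric bilinear form $\langle u,v\rangle=u^tv$ and has determinant $+1$. First embed $X$ in an $\s\l_2$-triple $\{X,H,Y\}\subset \s\o(n,\C)$ via Theorem~\ref{Jacobson-Morozov-alg}, and apply Proposition~\ref{unitary-J-basis} with $\D=\C$, $\sigma=\mathrm{Id}$, $\epsilon=1$ to produce a $\C$-basis $\{X^l v^d_j\}$ of $\C^n$ with controlled pairings: $\langle X^l v^d_j, X^{d-1-l}v^d_j\rangle\in\C^*$ when $d\in \O_\d$, and $\langle X^l v^d_j, X^{d-1-l}v^d_{j+1}\rangle\in\C^*$ when $d\in\E_\d$ and $j$ is odd, all other pairs of basis vectors being orthogonal. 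Since $X\in\s\o(n,\C)$, the associated partition $\d$ lies in $\PC_1(n)$, so $t_d$ is even whenever $d\in\E_\d$.

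I would then define $\tau$ on this basis by
\[
\tau(X^l v^d_j) \,=\, \begin{cases}
(-1)^l\, X^l v^d_j & d\in \O_\d^1,\\
(-1)^{l+1}\, X^l v^d_j & d\in \O_\d^3,\\
(-1)^l\, X^l v^d_{j+1} & d\in \E_\d,\ j\text{ odd},\\
(-1)^l\, X^l v^d_{j-1} & d\in \E_\d,\ j\text{ even}.
\end{cases}
\]
The formulae on odd $d$ are exactly those of \eqref{defn-g-matrix}; the twist on $\O_\d^3$ is forced by the requirement $\det\tau=1$, since the product of the diagonal entries on a string $\{X^l v^d_j\}_l$ would otherwise be $(-1)^{d(d-1)/2}=-1$ when $d\equiv 3\pmod 4$. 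For $d\in\E_\d$ a pure diagonal action would negate the non-zero pairing (the $(-1)^{d-1}=-1$ factor), so I swap paired vectors $v^d_{2i-1}\leftrightarrow v^d_{2i}$ instead; this is possible precisely because $t_d$ is even.

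The verification splits into four parts: (i) $\tau^2=\mathrm{Id}$ and $\tau X=-X\tau$ are immediate from the formula; (ii) form preservation on odd strings is the cancellation $(-1)^l(-1)^{d-1-l}=(-1)^{d-1}=+1$ (respectively $(-1)^{d+1}=+1$ on $\O_\d^3$), and on even strings one uses the identity $\langle X^a u, X^b v\rangle=(-1)^a\langle u, X^{a+b}v\rangle$ that follows from $X\in\s\o(n,\C)$, together with symmetry of $\langle\cdot,\cdot\rangle$, to absorb the sign $(-1)^{d-1}=-1$ introduced by the swap; (iii) unit determinant is a per-string computation, where each $2\times 2$ swap on $\E_\d$ contributes $-1$ but there are $d$ of them per pair, giving $(-1)^d=+1$, and the $\O_\d^3$ twist produces $(-1)^d(-1)^{d(d-1)/2}=+1$; (iv) cross pairings between basis vectors other than those listed vanish by Proposition~\ref{unitary-J-basis}, so nothing further arises. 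The main obstacle is the coordinated bookkeeping: each of the four conditions (involution, intertwining, form preservation, unit determinant) constrains the coefficients, and the twist on $\O_\d^3$ and the pairing on $\E_\d$ are both forced; the rest of the recipe is then determined.
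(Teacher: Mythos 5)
Your construction is essentially the paper's own proof: the same signs $(-1)^l$ on strings with $d\in\O_\d^1$, $(-1)^{l+1}$ on strings with $d\in\O_\d^3$, and a swap of paired strings for $d\in\E_\d$, with the same four verifications. The only (cosmetic) difference is that you pair the even-length strings as $(j,j+1)$ for $j$ odd, exactly as the non-degenerate pairings come out of Proposition~\ref{unitary-J-basis}(3), whereas the paper pairs $j$ with $j+t_d/2$ after an implicit reordering of the basis; your choice makes the form-preservation check on $\E_\d$ slightly more transparent, and the check itself (absorbing the sign $(-1)^{d-1}=-1$ via skew-adjointness of $X$ and symmetry of the form) is correct.
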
 

\begin{proof}
Let $ X\in \s\o(n,\C) $	be a non-zero nilpotent element. Let $ \{X,H,Y\} $ be a $ \s\l_2 $-triple in $ \s\o(n,\C) $.  Let  $\{ X^l v^{d}_j \,\mid  \, 1\,\leq\, j \,\leq\, t_{d}, \  0 \,\leq\, l \,\leq\, d-1 , d\in \N_{\d}\}$  is a $\C$-basis of $\C^n$ as in Proposition \ref{unitary-J-basis}. The partition $\d\in \PC_1(n)$ for $X$, see \cite[Theorems 5.1.2, 5.1.4]{CoMc}. Define $ g\in {\rm GL}(V) $ as follows:  
$$  
g(X^l v^{d}_j ) := \begin{cases}
(-1)^l X^{l} v^{d}_j &    \text{if }   d\in \O_\d^1\,, 	\\
(-1)^{l+1} X^{l} v^{d}_j & \text{if }  d\in \O_\d^3\,,  	\\
(-1)^{l} X^l v^{d}_{j+t_d/2} & \text{if }  d\in \E_\d\,, \, 1\leq j\leq t_d/2\,,\\
(-1)^{l} X^l v^{d}_{j-t_d/2} & \text{if } d\in \E_\d\,, \,  t_d/2< j\leq t_d \,. 
\end{cases}
$$
It follows from the definition  that $ gX=-Xg $, $ g^2 = 1 $, and $ \det g =1$.  Moreover,
\begin{align*}
\langle g(X^l v^{d}_j ), \,g(X^{d-l-1} v^{d}_j ) \rangle &= \langle X^l v^{d}_j ,\, X^{d-l-1} v^{d}_j  \rangle \qquad\qquad \text{ for } d\in \O_\d \,, \\
	\langle g(X^l v^{ {d}}_j ), \,g(X^{ {d}-l-1} v^{ {d}}_j ) \rangle &= \langle X^l v^{ {d}}_{j+t_d/2} ,\, X^{ {d}-l-1} v^{ {d}}_{j+t_d/2}  \rangle \quad  \text{for }  {d} \in \E_\d\,, 1\leq j\leq t_d/2\,. 
\end{align*}
In view of Proposition \ref{unitary-J-basis}, $\langle gx,gy \rangle = \langle x,y \rangle$ for all $ x,y\in \C^n $. Hence, $ g\in {\rm SO}(n,\C)$. 
\end{proof} 

\subsection{Nilpotent elements in $ \s\p(n,\C) $}
Throughout this subsection $ \<> $ denotes the skew symmetric form on $ \C^{2n} $ defined by $ \langle x,y \rangle = x^t{\rm J}_ny $ for $ x,y\in \C^{2n} $, where $ {\rm J}_n $ is as in  \eqref{defn-I-pq-J-n}. Recall the definition of $ \PC_{-1} (2n)$ as in \eqref{defn-Pn-1}.

In the case of $ \s\p(n,\C) $, every nilpotent element is not strongly real. For example,  any nilpotent element in  $ \s\p(1,\C) $ is not strongly real.  Nilpotent orbits in $ \s\p(n,\C) $ are parametrized by the partition of $ 2n $ in which odd parts occurs with even multiplicity, see \cite[Theorem 5.1.3]{CoMc}. Let $ X\in \s\p(n,\C) $ be a nilpotent element which corresponds to the partition $ \d:=[d_1^{t_{d_1}}, \ldots, d_s^{t_{d_s}}] \in \PC_{-1}(2n)$.  
Recall that in view of Lemma \ref{thm-real-nilp-g/C}, every  nilpotent element in $ \s\p(n,\C) $ is  real.  
Here we will construct an element $g\in {\rm Sp}(n,\C)$ so that $gX=-Xg$ for nilpotent $ X\in \s\p(n,\C) $. 
Let  $\{ X^l v^{d}_j \,\mid  \, 1\,\leq\, j \,\leq\, t_{d}, \  0 \,\leq\, l \,\leq\, d-1, d\in\N_{\d}\}$  is a $\C$-basis of $\C^{2n}$ as in Proposition \ref{unitary-J-basis}. Define $ g\in {\rm GL}\,(\C^{2n} )$ as follows:
\begin{align}\label{real-nilpotent-sp-n-c}
g(X^l v^{d}_j ) := \begin{cases}
(-1)^l X^{l} v^{d}_j &  \qquad   \text{\ if\ } d\in \O_\d   \,,   \\
(-1)^{l}\sqrt{-1} X^l v^{d}_{j} &\qquad \text{\ if\ $ d\in \E_\d$}\, . 
\end{cases}		
\end{align}
Note that $ gX=-Xg $. Using Proposition \ref{unitary-J-basis}, we have $\langle gx,gy \rangle = \langle x,y \rangle$ for all $ x,y\in \C^{2n} $. This shows that $ g\in {\rm Sp}(n,\C)$.   

Recall that for a partition $ \d:=[d_1^{t_{d_1}}, \ldots, d_s^{t_{d_s}}] $, $ t_{d_i} $ is the multiplicity of $d_i\in \N_\d$.

\begin{theorem}\label{thm-spnC-st-real}
A  nilpotent element $ X $ in $ \s\p(n,\C) $ is strongly real  if and only if   $ t_{\eta} $  is even for all  $ \eta \in \E_{\d} $.
\end{theorem}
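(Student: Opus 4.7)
The plan is to test an arbitrary strongly-real witness against the explicit element $g$ constructed in \eqref{real-nilpotent-sp-n-c}, and to transport the involution condition onto the reductive part of $Z_{{\rm Sp}(n,\C)}(X)$. By Theorem \ref{thm-springer-steinberg} this reductive part is
\[
L \,:=\, Z_{{\rm Sp}(n,\C)}(X,H,Y) \,\cong\, \prod_{\eta \in \E_\d} {\rm O}(t_\eta) \times \prod_{\theta \in \O_\d} {\rm Sp}(t_\theta, \C).
\]
Two elementary facts about $g$ drive the whole argument. First, $g^2$ acts as $-{\rm Id}$ on the even isotypic part $\bigoplus_{\eta \in \E_\d} M(\eta-1)$ and as $+{\rm Id}$ on $\bigoplus_{\theta \in \O_\d} M(\theta-1)$, so $g^2 \in L$ corresponds to $(-{\rm I}_{t_\eta})_\eta \times ({\rm I}_{t_\theta})_\theta$. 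Second, $g$ commutes pointwise with $L$: since $g$ scales each $X^l L(d-1)$ by a factor depending only on $(d,l)$, while every $l \in L$ preserves the decomposition $M(d-1) = \bigoplus_{l \geq 0} X^l L(d-1)$ and commutes with $X$, the identity $g l g^{-1} = l$ follows from a direct computation.

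For sufficiency, assume $t_\eta$ is even for every $\eta \in \E_\d$. For each such $\eta$ pick $A_\eta \in {\rm O}(t_\eta, \C)$ with $A_\eta^2 = -{\rm I}_{t_\eta}$ --- for instance the block matrix with $-{\rm I}_{t_\eta/2}$ in the top right and ${\rm I}_{t_\eta/2}$ in the bottom left, which is available precisely because $t_\eta$ is even. Modify $g$ on each $M(\eta-1)$ by
\[
\tau(X^l v^\eta_j) \,:=\, (-1)^l \sqrt{-1}\, X^l (A_\eta v^\eta_j), \qquad \eta \in \E_\d,
\]
while keeping $\tau = g$ on the odd part. The equalities $\tau X = -X\tau$ and $\tau^2 = {\rm Id}$ are immediate from the definition (the $-{\rm Id}$ produced by $A_\eta^2$ cancels the $(\sqrt{-1})^2 = -1$). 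To verify $\tau \in {\rm Sp}(n, \C)$, after rescaling each $v^\eta_m$ so that the self-pairing $\langle v^\eta_m, X^{\eta-1} v^\eta_m \rangle = 1$ (possible over $\C$ by taking square roots), the symplectic invariance of $\tau$ on each $M(\eta-1)$ reduces, via Proposition \ref{unitary-J-basis}(3), to the orthogonality relation $A_\eta^T A_\eta = {\rm I}_{t_\eta}$; on the odd part $\tau = g \in {\rm Sp}(n,\C)$ was already observed.

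For necessity, let $\tau \in {\rm Sp}(n, \C)$ be an involution with $\tau X \tau^{-1} = -X$, and set $c := g^{-1} \tau \in Z_{{\rm Sp}(n,\C)}(X)$. Expanding $(gc)^2 = {\rm Id}$ rearranges to ${\rm Ad}(g)(c) = c^{-1} g^{-2}$. Since the unipotent radical $U$ of $Z_{{\rm Sp}(n,\C)}(X)$ is characteristic, ${\rm Ad}(g)$ descends to an automorphism of the quotient $Z_{{\rm Sp}(n,\C)}(X)/U \cong L$, and the centralization fact above makes this descent the identity. Projecting the rearranged identity to $L$ and using $g^{-2} = g^2 \in L$ yields $\bar c^{\,2} = g^2$; writing $\bar c = (A_\eta)_\eta \times (B_\theta)_\theta$, this reads $A_\eta^2 = -{\rm I}_{t_\eta}$ in ${\rm O}(t_\eta, \C)$ for every $\eta \in \E_\d$. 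Such an $A_\eta$ satisfies $A_\eta^T = A_\eta^{-1} = -A_\eta$, so $A_\eta$ is a nonsingular skew-symmetric complex matrix, and the vanishing of the Pfaffian in odd order forces $t_\eta$ to be even.

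The delicate point is the centralization claim ${\rm Ad}(g)|_L = {\rm id}_L$; although a one-line computation, it is what makes the quotient argument go through. Without it, the projected identity would read $\bar c \cdot \overline{{\rm Ad}(g)(c)} = g^2$ for some potentially nontrivial automorphism of $L$, which would not pin down the parities of the $t_\eta$ directly.
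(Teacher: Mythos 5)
Your proof is correct and follows essentially the same route as the paper's: the same explicit witness $g$ from \eqref{real-nilpotent-sp-n-c}, the same reduction of the involution condition to $A_\eta^2 = -{\rm I}_{t_\eta}$ inside the Springer--Steinberg factor ${\rm O}(t_\eta,\C)$, and the same parity conclusion. The only difference is presentational: where you pass to the Levi quotient $Z_{{\rm Sp}(n,\C)}(X)/U$ and check ${\rm Ad}(g)|_L = {\rm id}$, the paper projects onto the block-diagonal part of $[\tau]_\BC$ in the ordered basis of \S\ref{sec-ordered-basis-V}, which is the same projection realized in coordinates (the diagonal blocks of $[g]_\BC$ being scalar is exactly your centralization claim).
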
 

\begin{proof}
Let  $\{ X^l v^{d}_j \,\mid  \, 1\,\leq\, j \,\leq\, t_{d}, \  0 \,\leq\, l \,\leq\, d-1, d\in \N_\d\}$  is a $\C$-basis of $\C^{2n}$ as in Proposition \ref{unitary-J-basis}. Suppose that $ t_i $ is even for  even $ d_i $. Define $ g\in {\rm GL}\,(\C^{2n} )$ as follows:
$$  
g(X^l v^{d}_j ) := \begin{cases}
	(-1)^l X^{l} v^{d}_j &  \qquad       \text{\ if\ }  d\in \O_\d    ,\\
(-1)^{l}\sqrt{-1} X^l v^{d}_{j+t_d/2} &\qquad \text{\ if\ $ d\in \E_{\d} $}  , \ 1\leq j\leq t_d/2\,,\\
(-1)^{l+1}\sqrt{-1}  X^l v^{d}_{j-t_d/2} &\qquad \text{\ if\ $ d\in \E_\d $},  \, \  t_d/2< j \leq  t_d\,. 
\end{cases}
$$ 
It follows that $ gX=-Xg $, $ g^2 = 1 $. 
In view of Proposition \ref{unitary-J-basis}, $\langle gx,gy \rangle = \langle x,y \rangle$ for all $ x,y\in V $. This shows that $ g\in {\rm Sp}(n,\C)$.   
	
Next assume that $X$ is a strongly real nilpotent element in $\s\p(n,\C)$. i.e. $ \nu X  \nu^{-1} =-X$  for some involution $  \nu\in {\rm Sp}(n,\C)$. For any $g\in {\rm Sp}(n,\C)$ so that $ gXg^{-1}=-X $,  
\begin{align}\label{involution-tau}
  \nu\, \in\, g Z_G(X)\,.
\end{align}
 Let  $\tau \in Z_G(X)$.   Recall that $ \BC $ is the ordered basis of  $ V $ as in \eqref{old-ordered-basis}. Following \S \ref{sec-ordered-basis-V}, let $ [\tau]_\BC= (A_{ij}) $, and 
 $\tau_D$ be the block diagonal part of  $ [\tau]_\BC$, i.e., consists of   only the matrices $(A_{jj})$ in the diagonal.
  Then $ \tau_D X=X \tau_D$ and $ \tau_D H=H \tau_D$.   Using Lemma \ref{comm-XH}, it follows that $ \tau_D\in Z_G(X,H,Y)$. 
 Because of Theorem \ref{thm-springer-steinberg}, we conclude  that for $1\leq j\leq s$
 \begin{align}\label{block-matrixAii}
  A_{jj} \,\in\, \begin{cases}
               {\rm O}(t_{d_j},\C)  & {\rm if\ } d_j \ {\rm even},\\
                {\rm Sp}(t_{d_j}/2, \C)  & {\rm if\ } d_j \ {\rm odd}\,.
             \end{cases}
\end{align}
Next  define $g\in {\rm Sp}(n,\C)$ as in  \eqref{real-nilpotent-sp-n-c}. Then the matrix $ [g]_\BC $ becomes a diagonal matrix. The first $t_{d_1}+ \cdots +t_{d_s}$ diagonal entries of $ [g]_\BC $ is of the form:
  \begin{align}\label{block-matrixDii}
 {\rm diag }\big( D_1,\ldots, D_s\big),\quad {\rm where \ } D_j=\begin{cases}
(-1)^{d_j-1} \sqrt{-1}\,{\rm I}_{t_{d_j}}&  {\rm if \ } d_j \ {\rm even},\\
(-1)^{d_j-1}{\rm I}_{t_{d_j}}&  {\rm if \ } d_j \ {\rm odd}\,.
    \end{cases}
\end{align}
As $ \nu$ is an involution, using \eqref{involution-tau} it follows that $A_{jj}^2\,= \,-{\rm I}_{t_{d_j}} $  when $d_j$ is even and $1\leq j \leq s$. Since  $\det A_{jj} = \pm1$, we conclude that $t_{d_j}$ is an even integer.	
\end{proof} 

We have a stronger result for nilpotent elements in $ {\rm Lie}({\rm PSp}(n,\C) )$.
\begin{theorem}\label{thm-nilpotent-psp-n-c}
	Every nilpotent  element in $ {\rm Lie}({\rm PSp}(n,\C) )$ is strongly real.
\end{theorem}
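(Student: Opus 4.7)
The plan is to refine the element $g \in \mathrm{Sp}(n,\C)$ of \eqref{real-nilpotent-sp-n-c}, which already satisfies $gXg^{-1}=-X$, into an element $\nu \in \mathrm{Sp}(n,\C)$ whose square is $-\mathrm{Id}$. Since $-\mathrm{Id}$ lies in the center of $\mathrm{Sp}(n,\C)$, the image of $\nu$ in $\mathrm{PSp}(n,\C)$ will be an involution conjugating $X$ to $-X$, witnessing strong $\mathrm{Ad}_{\mathrm{PSp}(n,\C)}$-reality. When every $t_\eta$ with $\eta\in\E_\d$ is even, Theorem \ref{thm-spnC-st-real} already supplies an actual involution in $\mathrm{Sp}(n,\C)$ and there is nothing to prove; so the substantive case is when some $t_\eta$ with $\eta\in\E_\d$ is odd.

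A direct computation from \eqref{real-nilpotent-sp-n-c} shows that $g^2$ acts as $\mathrm{Id}$ on $M(d-1)$ for $d\in\O_\d$ and as $-\mathrm{Id}$ on $M(d-1)$ for $d\in\E_\d$, so $g^2$ is not central. I would correct this by multiplying $g$ by a suitable $h\in Z_{\mathrm{Sp}(n,\C)}(X,H,Y)$ that squares to $-\mathrm{Id}$ on the blocks indexed by $\O_\d$ and to $\mathrm{Id}$ on those indexed by $\E_\d$. Because $\d\in\PC_{-1}(2n)$ forces $t_\theta$ to be even for every $\theta\in\O_\d$, Theorem \ref{thm-springer-steinberg} places a symplectic factor over each odd part of the partition; inside that factor one can pick an element of order four modeled on $\mathrm{J}_{t_\theta/2}$ from \eqref{defn-I-pq-J-n}, which satisfies $\mathrm{J}_{t_\theta/2}^2=-\mathrm{Id}$ and lies in the relevant symplectic group. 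Combining these choices with the identity on the $\mathrm{O}(t_\eta)$ factors yields the required $h$.

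Finally, $\nu := gh$ has to be verified to do the job. Since $h$ commutes with both $X$ and $H$, it preserves every weight space $X^l L(d-1)$, and $g$ acts on each such weight space by a single scalar drawn from \eqref{real-nilpotent-sp-n-c}; a one-line computation $gh(X^l v^d_j)=\lambda_{d,l}X^l h(v^d_j)=hg(X^l v^d_j)$ therefore gives $[g,h]=0$. Consequently $\nu X\nu^{-1}=gXg^{-1}=-X$ (using $[h,X]=0$) and $\nu^2=g^2 h^2=-\mathrm{Id}$ on each $M(d-1)$, hence on all of $\C^{2n}$. The only genuine obstacle is supplying an order-four element in the symplectic centralizer factor over each odd part, and this is resolved precisely by the parity constraint $t_\theta\in 2\N$ for $\theta\in\O_\d$ built into the definition of $\PC_{-1}(2n)$.
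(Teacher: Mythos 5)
Your proposal is correct. It shares the paper's overall strategy --- exhibit $\nu\in \mathrm{Sp}(n,\C)$ with $\nu X\nu^{-1}=-X$ and $\nu^2=-\mathrm{Id}$, so that the image of $\nu$ in $\mathrm{PSp}(n,\C)$ is the required involution --- and both arguments ultimately rest on the same parity fact, namely that $t_\theta$ is even for every $\theta\in\O_\d$ because $\d\in\PC_{-1}(2n)$. The difference is in how the witness is produced. The paper writes $\nu$ down in one stroke: it acts on $X^lv^\theta_j$ by $(-1)^l\sqrt{-1}$ on the first half of the indices $j$ and by $(-1)^{l+1}\sqrt{-1}$ on the second half for $\theta\in\O_\d$, and by $(-1)^l\sqrt{-1}$ on the even blocks, after which $\nu^2=-\mathrm{Id}$, $\nu X=-X\nu$ and $\nu\in\mathrm{Sp}(n,\C)$ are read off from Proposition \ref{unitary-J-basis}. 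You instead factor $\nu=gh$, starting from the known reversing element $g$ of \eqref{real-nilpotent-sp-n-c} and correcting its non-central square by an order-four element $h$ of $Z_{\mathrm{Sp}(n,\C)}(X,H,Y)$ taken from the symplectic factors of Theorem \ref{thm-springer-steinberg}; your commutation argument ($g$ scalar on each $X^lL(d-1)$, $h$ preserving each $X^lL(d-1)$) is sound, and the existence of an element squaring to $-\mathrm{Id}$ in each $\mathrm{Sp}$-factor is exactly where the evenness of $t_\theta$ enters. Your route costs an appeal to the centralizer structure theorem (which the paper uses elsewhere anyway, so nothing new is imported) but conceptually explains \emph{why} the correction is always available; the paper's is more self-contained and explicit. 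The opening case split (on whether some $t_\eta$, $\eta\in\E_\d$, is odd) is harmless but unnecessary, since your construction of $\nu$ works uniformly.
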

\begin{proof}
Let  $ X\in {\rm Lie}({\rm PSp}(n,\C) ) $ be a non-zero nilpotent element.
Let  $\{ X^l v^{d}_j \,\mid  \, 1\,\leq\, j \,\leq\, t_{d}, \  0 \,\leq\, l \,\leq\, d-1 , d\in \N_\d\}$ be a $\C$-basis of $\C^{2n}$ as in Proposition \ref{unitary-J-basis}. Recall that $ t_{d} $ is even for   $ d\in \O_\d $. 
 Define $ g\in {\rm GL}\,(\C^{2n} )$ as follows:
$$  
g(X^l v^{d}_j ) := \begin{cases}
(-1)^l \sqrt{-1} X^{l} v^{d}_j &  \qquad       \text{\ if \ } 1\leq j \leq t_d/2,\,  d\in \O_\d   \,,\\
(-1)^{l+1} \sqrt{-1} X^{l} v^{d}_j &  \qquad       \text{\ if \ } t_d/2<j\leq t_d,\, d\in \O_\d   \,,\\
(-1)^{l}\sqrt{-1} X^l v^{d}_{j} &\qquad \text{ if \ } d \in \E_\d\,  .
\end{cases}
$$ 
It follows that $ gX=-Xg $, $ g^2 = {\rm -Id} $. Using  Proposition \ref{unitary-J-basis}, we conclude that $\langle gx,gy \rangle = \langle x,y \rangle$ for all $ x,y\in \C^{2n} $. This shows that $ g\in {\rm Sp}(n,\C)$.     
\end{proof}

\section{Reality for nilpotent elements in classical Lie algebras over $\R$}\label{sec-reality-nilpotents--real-g}
In this section, we will consider the nilpotent elements in  simple Lie algebra over $\R $ of classical type. 

\subsection{The Lie algebras $\s\l_n(\R)$ \& $\s\l_n(\H)$}
In this subsection first we will consider nilpotent elements in $ \s\l_n(\R) $. Recall the definition of  $   \widetilde{\PC_{e}}(n) $ as in \eqref{def-Pe-n}.

\begin{theorem}\label{thm-sl-n-R-nilpot} 
Let $ X\in \s\l_n(\R) $ be a nilpotent element. Suppose that $X$ corresponds to the partition $\d \in \PC(n)$. Then  the following statements are equivalent :
\begin{enumerate}
\item $ X $ is  a real element.
\item $ X $ is a strongly  real element.
\item $\d\not\in   \widetilde{\PC_{e}}(n)$.
\end{enumerate}
\end{theorem}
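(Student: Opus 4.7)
The plan is to exploit the involution $g_0\in\mathrm{GL}_n(\R)$ produced in Proposition~\ref{Propo-glnD} via the recipe~\eqref{defn-g-matrix}, which already makes $X$ strongly real in $\mathrm{GL}_n(\R)$; the entire question then reduces to a determinantal obstruction, namely whether $g_0$ (possibly after a small modification) can be placed inside $\mathrm{SL}_n(\R)$. As a first step I would carry out a block-by-block sign count in the basis of Proposition~\ref{J-basis}, splitting cases on $d\pmod 4$. This should show that blocks with $d\in\O^1_\d\cup\O^3_\d$ and with $d\equiv 0\pmod 4$ each contribute $+1$, whereas each $\eta\in\E^2_\d$ contributes $(-1)^{t_\eta}$, giving
\[
\det g_0 \,=\, (-1)^{\sum_{\eta\in\E^2_\d} t_\eta}.
\]

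The implication $(2)\Rightarrow(1)$ is tautological. For $(1)\Rightarrow(3)$ I would argue by contrapositive: if $\d\in\widetilde{\PC_e}(n)$, then in particular $\d\in\PC_{\mathrm{even}}(n)$ and $\det g_0=-1$. For any hypothetical $g\in\mathrm{SL}_n(\R)$ realizing $gXg^{-1}=-X$, the product $g^{-1}g_0$ would lie in $Z_{\mathrm{GL}_n(\R)}(X)$ with determinant $-1<0$, which is ruled out by Lemma~\ref{comm-Xd-even}. This reduces the whole theorem to the remaining direction.

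For $(3)\Rightarrow(2)$ I would distinguish two sub-cases. When $\sum_{\eta\in\E^2_\d}t_\eta$ is even, $g_0$ already sits in $\mathrm{SL}_n(\R)$ and nothing more is needed. When the sum is odd, the hypothesis $\d\notin\widetilde{\PC_e}(n)$ forces $\d$ to carry an odd part: a very even partition would make the sum even, while $\d\in\PC_{\mathrm{even}}\setminus\PC_{v.\mathrm{even}}$ with odd sum is precisely the definition of $\widetilde{\PC_e}(n)$. Fixing $d\in\N_\d\cap\O_\d$, I would correct the determinant by composing $g_0$ with a commuting involution $h$: on the $X$-cyclic subspace $V^d_1=\mathrm{Span}_\R\{X^l v^d_1 : 0\le l\le d-1\}$ declare $h=-\mathrm{Id}$, and on the span of the remaining basis vectors from Proposition~\ref{J-basis} declare $h=+\mathrm{Id}$. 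Since $V^d_1$ is $X$-stable with $h|_{V^d_1}$ scalar, and since $g_0$ is diagonal in the same basis, $h$ commutes with both $X$ and $g_0$; combined with $h^2=\mathrm{Id}$ and $\det h=(-1)^d=-1$, this makes $\tau:=g_0 h\in\mathrm{SL}_n(\R)$ an involution realizing $\tau X\tau^{-1}=-X$.

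The main obstacle I expect is bookkeeping: pinning down the formula for $\det g_0$ as a function of the partition data, and in particular singling out $\E^2_\d$ as the only contributing block type, is the step where the residue-mod-$4$ calculations must be done carefully. Once that identification is secured, Lemma~\ref{comm-Xd-even} supplies $(1)\Rightarrow(3)$ essentially for free, and the presence of an odd part in $\d$ — which exists exactly when needed — provides enough freedom to flip the sign of the determinant inside the joint centralizer of $X$ and $g_0$.
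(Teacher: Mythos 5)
Your proposal is correct and follows essentially the same route as the paper: the involution of \eqref{defn-g-matrix} with $\det g_0=(-1)^{\sum_{\eta\in\E^2_\d}t_\eta}$, Lemma~\ref{comm-Xd-even} to rule out reality when $\d\in\widetilde{\PC_{e}}(n)$, and a sign flip on a single odd cyclic block to repair the determinant when $\d$ has an odd part. The only difference is organizational (you split by the parity of $\sum_{\eta\in\E^2_\d}t_\eta$ rather than by the three partition types), and your determinant bookkeeping and the commuting-involution correction are exactly the paper's Cases 2 and 3.
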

\begin{proof}
Using Proposition \ref{J-basis}, $\{ X^l v^d_j \,\mid  \, 1\,\leq\, j \,\leq\, t_d, \  0 \,\leq\, l \,\leq\, d-1,\ d \,\in\, \N_\d \}$ is a $\R$-basis of $\R^n$. Define $g\in {\rm GL}_n(\R)$ as \eqref{defn-g-matrix}.
Then $g^2 = {\rm Id}$, $Xg=-gX$, and $\det g =\pm 1$. Next we will consider three separate cases:

{\it Case 1: } If $\d\in \PC_{\rm v.even}(n) $, then $\det g =1$. Hence, $X$ is strongly real.

{\it Case 2: } Next assume that $\d\in \PC(n)\setminus \PC_{\rm even}(n)$. Moreover, assume that $\det g=-1$. Suppose $d_{i_0}\in \O_\d$. i.e., $d_{i_0}$ is an odd integer.
Then define $\widetilde g \in {\rm GL}_n(\R)$ as follows:
$$
\widetilde  g(X^lv^d_j)\,=\,
\begin{cases}
(-1)^lX^lv^d_j & \text{ if either  } d\neq d_{i_0} \ {\rm or }\ \text{ when } d= d_{i_0},\ j>1  \\
 (-1)^{l+1} X^lv^d_j & \text{ if  }  d= d_{i_0},\ j=1\,.                            
\end{cases}
$$
Then $\det \widetilde g = -\det g =1$, $\widetilde g^2 = {\rm Id}$, $X\widetilde g=-\widetilde gX$.  Hence, in this case also $X$ is strongly real.

{\it Case 3 : } Finally assume that $\d\in \PC_{\rm even}(n)\setminus \PC_{\rm v.even}(n)$. In view of Lemma \ref{comm-Xd-even}, $X$ is real if and only if $\det g =1$. Therefore, in this case if $X$ is real, then it is strongly real. 
It follows from the definition of $g$ as in \eqref{defn-g-matrix} that $\det g =(-1)^{\sum_{\eta \in \E^2_\d} t_\eta }$.

Now $(1) \Rightarrow (3)$ follows from {\it Case 3}.  $(2)$ always implies $(1)$. Finally,  $(3) \Rightarrow (2)$ follows from {\it Case 1, Case 2 {\rm and} Case 3}.
This completes the proof.
\end{proof}

As a corollary we have the following known result:

\begin{corollary}[{\cite[Theorem 3.1.1]{ST}}]
Let $ X\in \s\l_n(\R) $ be a nilpotent element. Suppose that $ n\not\equiv 2 \pmod 4 $.  Then  the following statements are equivalent:
\begin{enumerate}
	\item $ X $ is  a real element.
	\item $ X $ is a strongly  real element.
\end{enumerate}
\end{corollary}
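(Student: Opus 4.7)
The plan is to read the statement off Theorem~\ref{thm-sl-n-R-nilpot}, which already establishes the chain of equivalences $(1) \Leftrightarrow (2) \Leftrightarrow (3)$, with condition $(3)$ being $\d \notin \widetilde{\PC_{e}}(n)$. In particular the equivalence $(1) \Leftrightarrow (2)$ is proved there without any restriction on $n$, so specialising to $n \not\equiv 2 \pmod{4}$ yields the corollary immediately.

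To see that the corollary can actually be sharpened under this hypothesis, I would point out that $\widetilde{\PC_{e}}(n)$ is empty whenever $n \not\equiv 2 \pmod{4}$, from which it follows that every nilpotent $X \in \s\l_n(\R)$ is simultaneously real and strongly real. The verification splits on $n \bmod 4$. If $n$ is odd, no partition of $n$ has all parts even, so $\PC_{\rm even}(n)$ is already empty. If $n \equiv 0 \pmod{4}$, then for any $\d \in \PC_{\rm even}(n)$, writing $n = \sum_{\eta \in \E_\d} t_\eta \eta$ and reducing modulo~$4$, together with $\eta \equiv 2 \pmod{4}$ for $\eta \in \E_\d^2$ and $\eta \equiv 0 \pmod{4}$ otherwise, gives
\[
0 \,\equiv\, n \,\equiv\, 2\sum_{\eta \in \E_\d^2} t_\eta \pmod{4},
\]
which forces $\sum_{\eta \in \E_\d^2} t_\eta$ to be even; hence $\d$ fails the parity condition defining $\widetilde{\PC_{e}}(n)$.

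Putting this together, under the hypothesis of the corollary both conditions $(1)$ and $(2)$ are automatically satisfied, so their equivalence is trivial. There is no genuine obstacle in the argument since all the substantive work is already carried by Theorem~\ref{thm-sl-n-R-nilpot}; the corollary merely packages the observation that the parity obstruction $\sum_{\eta \in \E_\d^2} t_\eta \equiv 1 \pmod{2}$ can only occur when $n \equiv 2 \pmod{4}$, which is precisely where Singh--Thakur's hypothesis comes in.
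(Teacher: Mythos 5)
Your proposal is correct and follows essentially the same route as the paper, which simply asserts that condition $(3)$ of Theorem~\ref{thm-sl-n-R-nilpot} holds automatically when $n \not\equiv 2 \pmod{4}$; your mod-$4$ computation supplies the verification of that assertion which the paper leaves implicit. The preliminary remark that $(1)\Leftrightarrow(2)$ already holds unconditionally in Theorem~\ref{thm-sl-n-R-nilpot}, so the corollary is immediate even without the emptiness of $\widetilde{\PC_{e}}(n)$, is a valid and slightly cleaner observation.
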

\begin{proof}
	Condition $ (3) $ of Theorem \ref{thm-sl-n-R-nilpot} always true for $ n\not\equiv 2 \pmod 4 $.
\end{proof}

Next we will consider the nilpotent elements in the Lie algebra $ \s\l_n(\H) $. As the Lie algebra $ \s\l_2(\H) $ is isomorphic to $\s\u(2)$ which is a compact Lie algebra, we will further assume that $n>2$.

\begin{theorem}\label{thm-sl-n-H}
	Every nilpotent element in $ \s\l_n(\H) $ is strongly real.
\end{theorem}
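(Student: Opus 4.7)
The plan is to adapt the construction from Proposition \ref{Propo-glnD} directly and to supplement it with one structural observation about the reduced norm on ${\rm GL}_n(\H)$. Given a non-zero nilpotent $X \in \s\l_n(\H)$, I would first invoke Theorem \ref{Jacobson-Morozov-alg} to embed $X$ in an $\s\l_2(\R)$-triple, and then apply Proposition \ref{J-basis} to produce an $\H$-basis $\{X^l v^d_j \mid 1 \leq j \leq t_d,\ 0 \leq l \leq d-1,\ d \in \N_\d\}$ of $\H^n$. I would then define $g \in {\rm GL}_n(\H)$ by the same formula \eqref{defn-g-matrix} used in the proof of Proposition \ref{Propo-glnD}. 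Because $g$ acts on each basis vector by a real scalar $\pm 1$, the real scalars commute with quaternionic scalar multiplication, and the verifications $g^2 = {\rm Id}$ and $gX = -Xg$ are formally identical to the $\g\l_n(\D)$ case and need not be redone.

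The only new step is to check that this involution actually lies in ${\rm SL}_n(\H)$, i.e., that $\text{Nrd}_{{\rm M}_n(\H)}(g) = 1$. I expect this to be essentially the only point of substance. The cleanest route is to note that $\text{Nrd}_{{\rm M}_n(\H)} \colon {\rm GL}_n(\H) \to \R^*$ is a continuous homomorphism, and that ${\rm GL}_n(\H)$ is connected with $\text{Nrd}_{{\rm M}_n(\H)}({\rm Id}) = 1$, so its image lies in $\R_{>0}$. Combined with $\text{Nrd}_{{\rm M}_n(\H)}(g)^2 = \text{Nrd}_{{\rm M}_n(\H)}(g^2) = 1$, this forces $\text{Nrd}_{{\rm M}_n(\H)}(g) = 1$. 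Alternatively, one can compute directly: in the basis $\{X^l v^d_j\}$ the matrix of $g$ is diagonal with $\pm 1$ entries in $\H$, and each such entry contributes $|\pm 1|^2 = 1$ to the reduced norm.

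This contrasts sharply with the $\s\l_n(\R)$ and $\s\l_n(\C)$ cases treated in Theorem \ref{thm-sl-n-R-nilpot} and Theorem \ref{thm-sl-n-c-nilpot}, where the corresponding diagonal involution has determinant $\pm 1$ and the sign is governed by the delicate parity obstruction encoded by $\widetilde{\PC_{e}}(n)$. The quaternionic reduced norm washes this obstruction away entirely, which is why every nilpotent in $\s\l_n(\H)$ is strongly real without any restriction on the partition. For this reason I do not foresee any genuine obstacle; the key qualitative input is simply the positivity of $\text{Nrd}_{{\rm M}_n(\H)}$ on ${\rm GL}_n(\H)$, which replaces the more intricate determinant tracking needed in the real and complex settings.
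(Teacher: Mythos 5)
Your proof is correct and follows essentially the same route as the paper, which simply cites the construction of the involution $g$ from Proposition \ref{Propo-glnD}. Your additional verification that $\text{Nrd}_{{\rm M}_n(\H)}(g)=1$ (via positivity of the reduced norm on the connected group ${\rm GL}_n(\H)$) is a point the paper leaves implicit, and it is exactly the right observation explaining why no partition condition analogous to $\widetilde{\PC_{e}}(n)$ arises over $\H$.
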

\begin{proof}
The proof follows from Proposition \ref{Propo-glnD}.
\end{proof}

\subsection{The Lie algebra $\s\u(p,q) $}\label{sec-su-pq}
As  we would like to address nilpotent elements; so it will be assumed that $p \,>\,0$ and $q\,>\,0$. Here $\<>$  denotes the Hermitian form on $\C^{p+q}$ defined by $\langle x,\, y \rangle \,:=\, \overline{x}^t{\rm I}_{p,q} y$, where ${\rm I}_{p,q}$ is as in \eqref{defn-I-pq-J-n}. We will  follow notation as defined in \S \ref{sec-notation}.

Let  $X\, \in\, {\s\u}(p,q)$ be a non zero nilpotent element, and let $\{X, H, Y\} \,\subset \,{\s\u}(p,q)$ be a $\s\l_2$-triple. 
We now apply Proposition \ref{unitary-J-basis}, Remark \ref{unitary-J-basis-rmk}(2).  Let $\{d_1,\, \ldots,\, d_s\}$, with $d_1 \,>\, \cdots \,>\, d_s$, be the finite ordered set  of integers that arise as $\R$-dimension of non-zero irreducible   $\text{Span}_\R \{ X,H,Y\}$-submodules of $\C^{p+q}$.
Let $t_{d_r} \,:=\, \dim_\C L(d_r-1)$ for $ 1 \,\leq \,r \,\leq\, s$. Then we have the corresponding partition 
${\d}\,:=\, [d_1^{t_{d_1} },\, \ldots ,\,d_s^{t_{d_s} }]  \,\in\, \PC(p+q)$. Recall that for the partition $ \d $,  $\E_{\d}:= \{d_i\mid d_i \text{ is even}\} $, and $\O_{\d}:= \{d_i\mid d_i \text{ is odd}\} $. 
 Let 
\begin{align}\label{def-p-d-su-pq}
p_d:=\begin{cases}
\#  \{ j \,\mid \, \langle v^d_j ,\, X^{d-1} v^d_j \rangle\,=\,  1\} &  {\rm if } \,
d\,\in\, \O_\d\,, \\
\#\{j \,\mid \,   \sqrt{-1}\langle v^d_j ,\, X^{d-1} v^d_j \rangle \,=\, 1\}  &{\rm if } \, d \,\in\,  \E_\d\,;
\end{cases} 
\quad{\rm and}\quad  q_d:= t_d- p_d \,.
\end{align} 
Recall that the signed Young diagram is uniquely determined by the sign of $ 1^{\rm st} $ column, see Definition \ref{def-young-diag} and Remark \ref{rmk-signed-young-diag-unique}. Now we associate a signed Young diagram for the element $ X $ by setting $ +1 $ in the  $ p_d $-many boxes and  $ -1 $ in the rest  $ q_d $-many boxes in the $ 1^{\rm st} $ column of the rectangular block of size $ t_d\times d $. It follows that  $ \d\in\PC(n) $ and $ (p_d, q_d) $ for $ d\in \N_\d $ are conjugation invariant and hence, they determined a well-defined signed Young diagram for the nilpotent orbit $ \OC_X .$ In fact,
{\it  the  nilpotent orbits in $ \s\u(p,q) $ are in bijection with  the signed Young diagram of signature $ (p,q) $}; see \cite[Theorem 9.3.3]{CoMc}. We refer to   \cite[Theorem 4.10]{BCM} for details regarding the parametring map.

\begin{example}\label{example-su-p-q}
We will consider two examples here. 
We will list down  signed Young diagrams associated to the non-zero nilpotent orbits in $ \s\u(2,2) $ and  $ \s\u(3,2)$, and then identify the real ones.
First we will consider the  signed Young diagrams for $ \s\u(2,2) $.

\ytableausetup{centertableaux, boxsize=1.2em}
(i) \blue{ \begin{ytableau}
		$\tiny{+1 }$ &	$\tiny{-1 }$ & $\tiny{+1 }$ & $\tiny{-1 }$  
\end{ytableau} }, \quad 
(ii)  \blue{ \begin{ytableau}
	$\tiny{+1 }$ & $\tiny{-1 }$ & $\tiny{-1 }$\\
	$\tiny{+1 }$
\end{ytableau} }\,, \  
(iii) \blue{  \begin{ytableau}
	$\tiny{-1 }$ & $\tiny{+1 }$ & $\tiny{+1 }$\\
	$\tiny{-1 }$
\end{ytableau}}, \quad 
(iv)   \begin{ytableau}
		 $\tiny{+1 }$ & $\tiny{-1 }$\\
		$\tiny{+1 }$\\ $\tiny{-1 }$
\end{ytableau} , 
(v)  \begin{ytableau}
		$\tiny{-1 }$ & $\tiny{+1 }$\\
		$\tiny{+1 }$\\ $\tiny{-1 }$
\end{ytableau} ,

(vi)  \begin{ytableau}
		$\tiny{+1 }$ & $\tiny{-1 }$\\
		$\tiny{+1 }$&$\tiny{-1 }$
\end{ytableau} , 
(vii) \blue{ \begin{ytableau}
		$\tiny{+1 }$  & $\tiny{-1 }$    \\
		$\tiny{-1 }$   &$\tiny{+1 }$
\end{ytableau}}, 
(viii) \begin{ytableau}
	$\tiny{-1 }$  & $\tiny{+1 }$    \\
	$\tiny{-1 }$   &$\tiny{+1 }$
\end{ytableau}\,. 

The nilpotent orbits corresponding to the blue colored diagrams are real as well as strongly real. Let  $\OC_X$ be an orbit whose signed Young diagram is given by either (iv) or  (vi). Then the relevant diagram of $ \OC_{-X} $ is given by  (v) or (viii), respectively.  
Next we will consider the  signed Young diagrams for $ \s\u(3,2) $.

\ytableausetup{centertableaux, boxsize=1.2em}
(i) \blue{ \begin{ytableau}
		$\tiny{+1 }$ &	$\tiny{-1 }$ & $\tiny{+1 }$ & $\tiny{-1 }$  & $\tiny{+1 }$
\end{ytableau} }, \quad 
(ii) \begin{ytableau}
	$\tiny{+1 }$ & $\tiny{-1 }$ &$\tiny{+1 }$  & $\tiny{-1 }$\\
	$\tiny{+1 }$
\end{ytableau}\,, \  
(iii) \begin{ytableau}
	$\tiny{-1 }$ & $\tiny{+1 }$ &$\tiny{-1 }$  & $\tiny{+1 }$\\
	$\tiny{+1 }$
\end{ytableau}, \quad 
(iv) \blue{  \begin{ytableau}
		$\tiny{-1 }$ & $\tiny{+1 }$ & $\tiny{+1 }$\\
		$\tiny{+1 }$ &$\tiny{-1 }$
\end{ytableau} }, \  
(v) \blue{  \begin{ytableau}
		$\tiny{-1 }$ & $\tiny{+1 }$ & $\tiny{+1 }$\\
		$\tiny{-1 }$ &$\tiny{+1 }$
\end{ytableau}}\,,

(vi) \blue{ \begin{ytableau}
		$\tiny{+1 }$  & $\tiny{-1 }$ &$\tiny{-1 }$    \\
		$\tiny{+1 }$\\
		$\tiny{+1 }$
\end{ytableau}}, 
(vii) \blue{ \begin{ytableau}
		$\tiny{-1 }$ & $\tiny{+1 }$ & $\tiny{+1 }$\\
		$\tiny{+1 }$ \\
		$\tiny{-1 }$
\end{ytableau}}, 
(viii)  \begin{ytableau}
	$\tiny{+1 }$  & $\tiny{-1 }$    \\
	$\tiny{+1 }$   &$\tiny{-1 }$\\
	$\tiny{+1 }$
\end{ytableau}, 
 (ix) \blue{ \begin{ytableau}
		$\tiny{+1 }$  & $\tiny{-1 }$    \\
		$\tiny{-1 }$   &$\tiny{+1 }$\\
		$\tiny{+1 }$
\end{ytableau}}, 
(x) \begin{ytableau}
	$\tiny{-1 }$  & $\tiny{+1 }$    \\
	$\tiny{-1 }$   &$\tiny{+1 }$\\
	$\tiny{+1 }$
\end{ytableau}, 
(xi)\begin{ytableau}
	$\tiny{+1 }$  & $\tiny{-1 }$    \\
	$\tiny{+1 }$  \\
	$\tiny{+1 }$\\
	$\tiny{-1 }$
\end{ytableau}, 
(xii)\begin{ytableau}
	$\tiny{-1 }$  & $\tiny{+1 }$    \\
	$\tiny{+1 }$  \\
	$\tiny{+1 }$\\
	$\tiny{-1 }$
\end{ytableau}.
The nilpotent orbits corresponding to the blue colored diagrams are real as well as strongly real. Let  $\OC_X$ be an orbit whose signed Young diagram is given by either (ii) or (viii) or (Xi). Then the relevant diagram of $ \OC_{-X} $ is given by  (iii) or (x) or (xii), respectively.
\qed\end{example}

\begin{theorem}\label{thm-supq-real-streal}
Let $X\in \s\u(p,q)$ be a non-zero nilpotent element. Let $ \d=\, [d_1^{t_{d_1} }, \ldots ,d_s^{t_{d_s} }] $ $\in \PC(p+q) $ be the partition  associated  to the orbit $\OC_X$ of $X$, and $ \E_{\d} $ be as in \eqref{Nd-Ed-Od}. Let  $p_{d_i}$ (resp. $q_{d_i}$) be the number of $+1$(resp. $-1$) occurred in the $1^{\rm st}$ column of the block of size $t_{d_i}{d_i}\times t_{d_i}{d_i}$ in the corresponding signed Young diagram. Then  the following statements are equivalent:	
\begin{enumerate}
		\item $X$ is real in $\s\u(p,q)$.
		\item $X$ is strongly real in $\s\u(p,q)$.
		\item  $p_\eta\,=\,q_\eta\, =\,t_\eta/2  $ for all $\eta\in \E_\d$.
\end{enumerate}	
\end{theorem}

\begin{proof}
We will show $ (3) \,\Rightarrow \,(2) \, \Rightarrow \, (1) \,\Rightarrow\,(3) $. 
Let $ \{X,H,Y\} \subset \s\u(p,q)$ be a $\s\l_2$-triple. Then $ \C^{p+q} $ has a $ \C $-basis of the form $ \{X^jv_j^d\mid d\in \N_\d \} $ which satisfy  Proposition \ref{unitary-J-basis} (3). 
 First assume that $(3)$ holds. i.e., $p_d=q_d$ for all $d\in \E_\d$.   After suitable reordering we may assume that 
$$
 \langle v_j^d, X^{d-1}v^d_j \rangle = \begin{cases}
 \sqrt{-1}   & \quad 1\leq j\leq t_d/2\,,\quad d\in \E_\d \\
 - \sqrt{-1} & \quad  t_d/2< j\leq t_d\,, \quad d\in \E_\d\\
 +1   & \quad 1\leq j\leq p_d\,,\quad d\in \O_\d \\
 {-1} & \quad  p_d< j\leq t_d\,, \quad d\in \O_\d\\
 \end{cases}
 $$
 Define $ g\in {\rm GL(\C^{p+q})} $ as follows:
$$  
g(X^l v^{d}_j ) := \begin{cases}
	(-1)^l X^{l} v^{d}_j &  \qquad \text{\ if }   d \,\in\O_\d^1\,  ,    
	\\
	(-1)^{l+1} X^{l} v^{d}_j &  \qquad  \text{\ if } d\in \O_\d^3 \,,  
	\\
	(-1)^{l} X^l v^{d_i}_{j+t_{d}/2} &\qquad \text{\ if } d \in \E_\d \,  , \ 1\leq j\leq t_{d}/2\,,\\
	(-1)^{l} X^l v^{d_i}_{j-t_{d}/2} &\qquad \text{\ if }  d\in \E_\d\,  ,  \  t_{d_i}/2< j\leq t_{d_i}\,. 
\end{cases}
$$
Then $ g\in {\rm SU}(p,q) $, $ g^2={\rm Id} $, and $ gX=-Xg $. This prove  $ (3)\Rightarrow (2) $.

Note that $(2)\Rightarrow (1) $ is always true. Thus, it remains to show $ (1)\Rightarrow (3)$.  Suppose $(1)$ holds.  $X$ is real in $\s\u(p,q)$, i.e., $-X\in \OC_X$, the nilpotent orbit of $X.$  
In view of Remark \ref{rmk-signed-young-diag-unique}, the signed Young diagram of $\OC_X$ is determined by $\d\in\PC(n)$ and $(p_d, q_d)$ for all $d\in \N_\d$ where $p_d$ and $q_d$ is given by \eqref{def-p-d-su-pq}. Note that $ \{-X,H,-Y\}$ is a $\s\l_2$-triple in $\s\u(p,q)$ containing $-X$ and the partition  $\d\in \PC(p+q)$ for the nilpotent element $-X$ is same as $X$, see Section \ref{sec-Jacobson-Morozov-partion}. Following the construction as done for $X$, let 
\begin{align}\label{def-p'-d-su-pq}
p'_d:=\begin{cases}
\#  \{ j \,\mid \, \langle v^d_j ,\, (-X)^{d-1} v^d_j \rangle\,=\,  1\} &  {\rm if } \,
d\,\in\, \O_\d\,, \\
\#\{j \,\mid \,   \sqrt{-1}\langle v^d_j ,\, (-X)^{d-1} v^d_j \rangle \,=\, 1\}  &{\rm if } \, d \,\in\,  \E_\d\,;
\end{cases} 
\quad{\rm and}\quad  q'_d := t_d- p'_d \,.
\end{align} 
Comparing \eqref{def-p-d-su-pq} and \eqref{def-p'-d-su-pq}, we conclude that 
$$
(p_d,q_d)= (p_d',q_d') \quad {\rm for }\,   d\in \O_{\d} \,, \quad   (p_d,q_d)= (q_d',p_d') \quad {\rm for }\,   d\in \E_{\d}  \,.
$$
As $-X\in \OC_X$,  the signed Young diagrams of $ \OC_X $ and $ \OC_{-X} $  coincide, see \cite[Theorem 9.3.3]{CoMc}. Hence,  $p_d=q_d$ for all $d\in \E_\d$. This completes the proof.
\end{proof}

\subsection{The Lie algebra $\s\o(p,q) $}\label{sec-so-pq}
Next we will consider the real simple  Lie algebra $ \s\o(p,q) $, see \S \ref{sec-Associated-Lie-gp} for the definition. Since we will deal with the nilpotent elements, we can  assume that $p,\, q>0$ . In this subsection, $\<>$ denotes the symmetric
form on  $\R^{p+q}$ defined by $\langle x,\, y \rangle \,:=\, x^t{\rm I}_{p,q} y$, where ${\rm  I}_{p,q}$ is as in \eqref{defn-I-pq-J-n}. Here we will consider the ${\rm  Ad}$-action of the group $ {\rm SO}(p,q) $ on the Lie algebra $ \s\o(p,q) $.

\begin{theorem}\label{propo-so-pq-unilpotent}
Every nilpotent element in $ \s\o(p,q) $ is strongly  {\rm Ad}$ _{{\rm SO}(p,q)}$-real. 
\end{theorem}
\begin{proof}
Let $X\in \s\o(p,q)$ be a non zero nilpotent element. Then $ \R^{p+q}$ has a basis of the form $ \{X^lv^j_d\,\mid \,0\leq l\leq d-1,\, 1\leq j\leq t_d,\, d\in \N_\d \} $ which satisfies Proposition \ref{unitary-J-basis}(3) and Remark \ref{unitary-J-basis-rmk}(1). Now define $ g\in {\rm GL}(\R^{p+q}) $ as follows :
\begin{align}\label{defn-g-so-pq-unilpotent}
g(X^l v^{d}_j ) := \begin{cases}
(-1)^l X^{l} v^ {d}_j &  \qquad \text{\ if  }   d\in \O_\d^1 \,, \\
(-1)^{l+1} X^{l} v^ {d}_j &  \qquad \text{\ if  } d\in \O_{\d}^3 \,, \\
(-1)^{l} X^l v^ {d}_{j+t_d/2} &\qquad \text{\ if  } d\in \E_{\d} \,  , \ 1\leq j\leq t_d/2\,,\\
(-1)^{l} X^l v^ {d}_{j-t_d/2} &\qquad \text{\ if  }  d\in \E_{\d} \,  ,  \  t_d/2< j \leq t_d \,. 
\end{cases}
\end{align}

Note that $\det g=1, \, gX=-Xg $ and $g\in {\rm SO}(p,q)$.
\end{proof}

\subsection{The Lie algebra $ \s\o^*(2n) $}\label{sec-so-*2n}
Here we will consider the real simple  Lie algebra $ \s\o^*(2n)  $. Recall that $ \H\,=\,\R + \,\ib\R+\,\jb\R +\,\kb\R $.  Throughout this subsection $\<>$ denotes the skew-Hermitian form on $\H^n$ defined by $\langle x, y \rangle \,:=\, \overline{x}^t \jb{\rm I}_{n} y$, for $x,y \in \H^n$. We will follow notation as defined in \S \ref{sec-notation}.

Let  $X\, \in\, {\s\o^*}(2n)$ be a non zero nilpotent element, and let $\{X, H, Y\} \,\subset \,{\s\o^*}(2n)$ be a $\s\l_2$-triple. 
We now apply Proposition \ref{unitary-J-basis}, Remark \ref{unitary-J-basis-rmk}(2). Following  Section \ref{sec-ordered-basis-V},
let $t_{d_r} \,:=\, \dim_\H L(d_r-1)$ for $ 1 \,\leq \,r \,\leq\, s$, and
${\d}\,:=\, [d_1^{t_{d_1} },\, \ldots ,\,d_s^{t_{d_s} }]  \,\in\, \PC(n)$ be the partition of $ n $ corresponding to the element $ X $.   Recall that for the partition $ \d $,  $\E_{\d}:= \{d_i\mid d_i \text{ is even}\} $, and $\O_{\d}:= \{d_i\mid d_i \text{ is odd}\} $; see \eqref{Nd-Ed-Od}. 
Let
\begin{align}\label{def-p-d-so*}
 p_d:=\begin{cases}
\#  \{ j \,\mid \, \langle v^d_j ,\, X^{d-1} v^d_j \rangle\,=\,  1\} &  {\rm when} \,
d\,\in\, \O_\d\,, \\
\#\{j \,\mid \,   \sqrt{-1}\langle v^d_j ,\, X^{d-1} v^d_j \rangle \,=\, 1\}  &{\rm when} \, d \,\in\,  \E_\d\,;
\end{cases} 
\quad{\rm and}\quad  q_d:= t_d- p_d \,.
\end{align}
Now we associate a signed Young diagram for the element $ X $ by setting $ +1 $ in the  $ p_\eta $-many boxes and  $ -1 $ in the rest  $ q_\eta $-many boxes in the $ 1^{\rm st} $ column of the rectangular block of size $ t_\eta\times \eta $ for $ \eta\in \E_\d $, and place $ +1 $ in the left most box of  odd length rows.  It follows that this association does not depend on the conjugacy class.  We refer to  \cite[Section 4.6]{BCM} for more details regarding the parametrizing map.   
{\it The nilpotent orbits in  $ \s\o^*(2n) $ is parametrized by the signed Young diagram of size $ n $ and any signature in which rows of odd length have their left most boxes labelled $ +1 $}, see \cite[Theorem 9.3.4]{CoMc}. 

\begin{theorem}\label{thm-so*2n-nilpot}
Let $ X $ be a nilpotent element in $ \s\o^*(2n) $.  Let $ \d=\, [d_1^{t_{d_1} }, \ldots ,d_s^{t_{d_s} }]  \in \PC(n) $ be the partition  corresponding to the orbit $\OC_X$ of $X$, and $ \E_{\d} $ be as in \eqref{Nd-Ed-Od}.  Then the following statements are equivalent :
\begin{enumerate}
\item $ X $ is real. 
\item  $ X $ is strongly real. 
\item $ p_\eta=q_\eta = t_\eta/2$ for all $ \eta\in \E_\d $.
\end{enumerate}
\end{theorem}

\begin{proof}
We will show $ (3) \,\Rightarrow \,(2) \, \Rightarrow \, (1) \,\Rightarrow\,(3) $.  Let $ \{X,H,Y\} \subset \g$ be a $\s\l_2$-triple corresponding to $ X $. Then $ \H^n $ has a $ \H $-basis of the form $ \{X^jv_j^d\mid d\in \N_\d \} $ which satisfy  Proposition \ref{unitary-J-basis}. 
First assume that $p_d=q_d$ for all $d\in \E_\d$. In view of Remark \ref{unitary-J-basis-rmk},  we can assume that 
$$
\langle v_j^d, X^{d-1}v^d_j \rangle = \begin{cases}
1   & \quad 1\leq j\leq p_d \,\quad d\in \E_\d \,,\\
-1  & \quad  p_d< j\leq t_d\, \quad d\in \E_\d\,,\\
{\bf j} &  \quad d\in \O_\d\,.\\
\end{cases}
$$
Define $ g\in {\rm GL}(\H^n) $ as follows:
$$  
g(X^l v^ {d}_j ) := \begin{cases}
(-1)^l X^{l} v^ {d}_j &  \text{if }  d\in \O_\d^  1, \\
(-1)^{l+1} X^{l} v^ {d}_j &  \text{if }  d\in\O_\d^3,    \\ 
(-1)^{l} X^l v^ {d}_{j+t_ {d}/2} & \text{if }  d\in \E_\d\,  , \ 1\leq j\leq t_ {d}/2\,,\\
(-1)^{l} X^l v^ {d}_{j-t_ {d}/2} & \text{if }  d\in \E_\d    \,  ,  \ { t_ {d}}/2< j\leq {t_ {d}}\,. 
\end{cases}
$$
Then $ g\in {\rm SO^*}(2n) $, $ g^2={\rm Id} $, and $ gX=-Xg $. This proves $ (2) $.

Note that $ (2)\Rightarrow (1) $ is always true. Thus, it remains to show $ (1)\Rightarrow (3) $. 
Suppose $(1)$ holds, i.e., $-X\in \OC_X$. Thus the partition  $\d\in \PC(n)$ for the nilpotent element $-X$ is same as $X$.
In view of Remark \ref{rmk-signed-young-diag-unique}, the signed Young diagram of $\OC_X$ is determined by $\d\in\PC(n)$ and $(p_d, q_d)$ for all $d\in \N_\d$ where $p_d$ and $q_d$ is given by \eqref{def-p-d-so*}. Note that $ \{-X,H,-Y\}$ is a $\s\l_2$-triple in $\s\o^*(2n)$ containing $-X$. 
Following the construction as done for $X$, let 
\begin{align}\label{def-p'-d-so*}
p'_d:=\begin{cases}
\#  \{ j \,\mid \, \langle v^d_j ,\, (-X)^{d-1} v^d_j \rangle\,=\,  1\} &  {\rm if } \,
d\,\in\, \O_\d\,, \\
\#\{j \,\mid \,   \sqrt{-1}\langle v^d_j ,\, (-X)^{d-1} v^d_j \rangle \,=\, 1\}  &{\rm if } \, d \,\in\,  \E_\d\,;
\end{cases} 
\quad{\rm and}\quad  q'_d := t_d- p'_d \,.
\end{align} 
Comparing \eqref{def-p-d-so*} and \eqref{def-p'-d-so*}, we conclude that 
$$
(p_d,q_d)= (p_d',q_d') \quad {\rm for }\,   d\in \O_{\d} \,, \quad   (p_d,q_d)= (q_d',p_d') \quad {\rm for }\,   d\in \E_{\d}  \,.
$$
Since, $-X\in \OC_X$,  the signed Young diagrams of $ \OC_X $ and $ \OC_{-X} $  coincide, see \cite[Theorem 9.3.4]{CoMc}. Hence,  $p_d=q_d$ for all $d\in \E_\d$. This completes the proof.
\end{proof}

\subsection{The Lie algebra $ \s\p(n,\R) $} \label{sec-sp-n-R}
Here we will consider the real simple  Lie algebra $ \s\p(n,\R) $.  Throughout this 
subsection $\<>$ denotes the symplectic form on $\R^{2n}$ defined by $\langle x, y \rangle := 
x^t{\rm J}_{n} y$, $x,\,y \,\in\, \R^{2n}$, where ${\rm J}_{n}$ is as in \eqref{defn-I-pq-J-n}.

Let  $X\, \in\, {\s\p}(n,\R)$ be a non zero nilpotent element. 
We now apply Proposition \ref{unitary-J-basis}, Remark \ref{unitary-J-basis-rmk}(2). 
Following  Section \ref{sec-ordered-basis-V},
let $t_{d_r} \,:=\, \dim_\R L(d_r-1)$ for $ 1 \,\leq \,r \,\leq\, s$, and
${\d}\,:=\, [d_1^{t_{d_1} },\, \ldots ,\,d_s^{t_{d_s} }]  \,\in\, \PC(2n)$ be the partition corresponding to the element $ X $.
Let
$$ p_\eta:=
\#  \{ j \,\mid \, \langle v^\eta_j ,\, X^{\eta-1} v^\eta_j \rangle\,=\,  1\} \,, 
\quad{\rm and}\quad  q_\eta:= t_\eta- p_\eta \quad {\rm for} \,
\eta\,\in\, \E_\d\,.
$$
Now we associate a signed Young diagram for the element $ X $ by setting $ +1 $ in the  $ p_\eta $-many boxes and  $ -1 $ in the rest  $ q_\eta $-many boxes in the $ 1^{\rm st} $ column of the rectangular block of size $ t_\eta\times \eta $ for $ \eta\in \E_\d $, and place $ +1 $ in the left most box of  odd length rows. It follows that this association does not depend on the conjugacy class.  We refer to  \cite[Section  4.7]{BCM} for details about  parametrizing map.
{\it The nilpotent orbits in  $ \s\p(n,\R) $ are parametrized by the signed Young diagram of size $2 n $ and any signature in which rows of odd length have their left most boxes labelled $ + 1$ and occur with even multiplicity}, see \cite[Theorem 9.3.5]{CoMc}.

\begin{theorem}\label{thm-sp-n-R-nilpot}
Let $ X $ be a nilpotent element in $ \s\p(n,\R) $. Let $ \d=\, [d_1^{t_{d_1} }, \ldots ,d_s^{t_{d_s} }]  \in \PC(2n) $ be the associated partition for  $X$, and $ \E_{\d} $ be as in \eqref{Nd-Ed-Od}. Then  the following statements are equivalent :	
\begin{enumerate}
\item $ X $ is real. 
\item$ X $ is strongly real.
\item   $ p_\eta=q_\eta = t_\eta/2$ for all $ \eta\in \E_\d $.
\end{enumerate}
\end{theorem}

\begin{proof}
We will show $ (3) \,\Rightarrow \,(2) \, \Rightarrow \, (1) \,\Rightarrow\,(3) $. 
Let $ \{X,H,Y\} \subset \s\p(n,\R) $ be a $\s\l_2$-triple corresponding to $ X $. 
Then $ \R^{2n} $ has a basis of the form $ \{X^jv_j^d\mid d\in \N_\d \} $ which satisfy  Proposition \ref{unitary-J-basis} (3).  In view of Remark \ref{unitary-J-basis-rmk}(1),  we can assume that 
$$
\langle v_j^d, X^{d-1}v^d_j \rangle = \begin{cases}
1   &  1\leq j\leq p_d \\
	-1  &   p_d< j\leq t_d
\end{cases}, d\in \E_\d \,;\  	\langle v_j^d, X^{d-1}v^d_{t_d/2+j} \rangle = 1 \,{\rm for \,}  d\in \O_\d ,1\leq j\leq t_d/2.
$$
First assume that $p_d=q_d$ for all $d\in \E_\d$.  Define $ g\in {\rm GL(\R^{2n})} $ as follows:
$$  
g(X^l v^ {d}_j ) := \begin{cases}
(-1)^l X^{l} v^ {d}_j &  \qquad \text{if }  d\in \O_\d^1   \,,\     
\\
(-1)^{l+1} X^{l} v^ {d}_j &  \qquad  \text{if }  d \in \O_\d^3 \,, 
\\
(-1)^{l} X^l v^ {d}_{j+t_d/2} &\qquad \text{if } d\in \E_{\d}\,  ,\ 1\leq j\leq t_d/2\,,\\
(-1)^{l} X^l v^ {d}_{j-t_d/2} &\qquad \text{if } d\in \E_{\d}\,  ,\ t_d/2< j\leq t_d\,. 
\end{cases}
$$
Then $ g\in {\rm Sp}(n,\,\R) $, $ g^2={\rm Id} $, and $ gX=-Xg $. This proves $ (2) $.

Note that $ (2)\Rightarrow (1) $ is always true. Thus, it remains to show $ (1)\Rightarrow (3) $.  
Let the signed Young diagram for the orbit $ \OC_{-X} $ be determined by $ \d'\in \PC(n) $ and $ (p_d',q_d') $ for $ d\in \N_\d $. Using Proposition \ref{unitary-J-basis} and Remark \ref{unitary-J-basis-rmk}(3), it follows that 
$$ \d=\d' \,;\quad (p_d,q_d)= (p_d',q_d') \quad {\rm for }\,   d\in \O_{\d} \,, \quad   (p_d,q_d)= (q_d',p_d') \quad {\rm for }\,   d\in \E_{\d}  \,.$$
Now $ (1) $ implies  $ -X\in \OC_X $. Therefore,  the signed Young diagrams of $ \OC_X $ and $ \OC_{-X} $  coincide, see \cite[Theorem 9.3.5]{CoMc}. Hence,  $p_d=q_d$ for all $d\in \E_\d$. This completes the proof.
\end{proof}

\subsection{The Lie algebra $\s\p(p,q)$} \label{sec-sp-pq}
Here we will deal with the nilpotent elements in the Lie algebra $\s\p(p,q)$. We will further assume that $p,\, q\, >\,0$.
Recall that $ \H\,=\,\R + \,\ib\R+\,\jb\R +\,\kb\R $.
 Throughout this subsection $\<>$ denotes the hermitian  form on  $\H^n$ defined by $\langle x,\, y \rangle \,:=\, \bar x^t{\rm I}_{p,q} y$, where
${\rm  I}_{p,q}$ is as in \eqref{defn-I-pq-J-n}.
Recall the definition of $\N_\d$ and $ \E_\d $ as in \eqref{Nd-Ed-Od}. 

\begin{theorem}\label{thm-st-real-psp-pq}
	Every non-zero nilpotent element in ${\rm Lie(PSp}(p,q))$ is real.
\end{theorem}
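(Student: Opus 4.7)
The plan is to imitate the construction from Theorem \ref{thm-nilpotent-psp-n-c}, substituting the quaternion unit $\ib$ for $\sqrt{-1}$, and produce an explicit element $g \in {\rm Sp}(p,q)$ conjugating $X$ to $-X$. Since ${\rm PSp}(p,q) = {\rm Sp}(p,q)/\{\pm{\rm Id}\}$ and the centre acts trivially on $\s\p(p,q)$, exhibiting such a $g \in {\rm Sp}(p,q)$ immediately proves ${\rm Ad}_{{\rm PSp}(p,q)}$-reality of $X$; no involution is needed, since the statement only asserts reality, not strong reality.

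First, I will fix a $\s\l_2(\R)$-triple $\{X,H,Y\}$ in $\s\p(p,q)$ and invoke Proposition \ref{unitary-J-basis} together with Remark \ref{unitary-J-basis-rmk}(3) in the case $\D = \H$, $\epsilon = 1$, $\sigma = \sigma_c$, to obtain an $\H$-basis $\{X^l v^d_j \mid d \in \N_\d,\, 1 \le j \le t_d,\, 0 \le l \le d-1\}$ of $\H^{p+q}$ on which the Hermitian pairing is controlled: $\langle v^d_j, X^{d-1} v^d_j \rangle = \pm 1$ for $d \in \O_\d$, $\langle v^d_j, X^{d-1} v^d_j \rangle = \jb$ for $d \in \E_\d$, and all other basis pairings vanish. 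I will then define the right $\H$-linear map $g \in {\rm GL}(\H^{p+q})$ by
\[
g(X^l v^d_j) := \begin{cases} (-1)^l X^l v^d_j, & d \in \O_\d, \\ (-1)^l (X^l v^d_j)\,\ib, & d \in \E_\d. \end{cases}
\]
The intertwining relation $gX = -Xg$ is immediate from the formula (right $\H$-linearity of $X$ ensures the scalar $\ib$ on the even blocks does not obstruct the computation).

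To verify $g$ preserves $\<>$, it suffices to check the non-zero pairings $\langle X^l v^d_j, X^{d-1-l} v^d_j \rangle = (-1)^l \langle v^d_j, X^{d-1} v^d_j \rangle$ within each block. For $d \in \O_\d$ this is a direct sign check, since the scalars $(-1)^l$ are central. For $d \in \E_\d$, the key quaternionic identity is $\ib \jb \ib = \jb$; combined with the fact that $l$ and $d-1-l$ have opposite parities when $d$ is even (and $\overline{\ib} = -\ib$), this yields the required invariance. To conclude $g \in {\rm Sp}(p,q) \subset {\rm SL}_{p+q}(\H)$ I will note that $g$ acts block-diagonally on the $X^l$-filtration, and on each block is either a real scalar $\pm 1$ or right-multiplication by $\pm \ib$; both have reduced norm $1$ over $\H$ (right-multiplication by $\ib$ on $\H \cong \C^{2}$ corresponds to the $\C$-linear map ${\rm diag}(\ib, -\ib)$, whose complex determinant is $1$).

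The main obstacle will be the delicate quaternionic sign bookkeeping in the form-preservation step for even $d$: since $g$ mixes an alternating real sign with right-multiplication by the non-central quaternion $\ib$, the Hermitian axiom $\langle v\alpha, u\beta\rangle = \overline{\alpha}\langle v,u\rangle\beta$ has to be applied with care, and one must exploit both $\overline{\ib} = -\ib$ and $\ib\jb\ib = \jb$ to see the required cancellation. Once that computation is in hand, ${\rm Ad}_{{\rm PSp}(p,q)}$-reality of $X$ follows at once.
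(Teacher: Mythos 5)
Your proposal is correct and follows essentially the same route as the paper: both fix a $\s\l_2(\R)$-triple, take the adapted basis of Proposition \ref{unitary-J-basis} with the normalization of Remark \ref{unitary-J-basis-rmk}(3), and write down an explicit $g\in{\rm Sp}(p,q)$ with $gX=-Xg$ built from the signs $(-1)^l$ and right multiplication by $\ib$ (the identity $\bar{\ib}\,\jb\,\ib=-\jb$ doing the work on the even blocks). The only difference is that the paper applies the $\ib$-twist uniformly to all blocks, so that its $g$ satisfies $g^2=-{\rm Id}$ and hence descends to an involution in ${\rm PSp}(p,q)$, whereas your block-dependent choice (with $g^2={\rm Id}$ on the odd part and $g^2=-{\rm Id}$ on the even part) establishes reality only — which is all the statement requires.
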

\begin{proof}
Let $ X\in \s\p(p,q) $ be a non zero nilpotent element. Then $\H^{p+q} $ has a $ \H $-basis of the form $ \{X^lv^j_d\mid d\in \N_\d \} $ which satisfies Proposition \ref{unitary-J-basis}(3). Now define $ g\in {\rm GL}(\H^{p+q}) $ as follows :
$$  
g(X^l v^{d}_j ) :=(-1)^{l}  X^l v^{d}_{j}\ib  \,,\quad  d\in \N_\d  \, .
$$
Then $  g^2={\rm -Id}, \, gX=-Xg $ and $ g\in {\rm Sp}(p,q) $.
This completes the proof.
\end{proof}	

Now we have an immediate corollary  which extends \cite[Theorem 1.1]{BG}  for any unipotent element in the higher rank situation.
\begin{corollary}\label{cor-sp-pq-real}
		Every non-zero nilpotent element in $\s\p(p,q)$ is real.
\end{corollary}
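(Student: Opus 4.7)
The plan is that this corollary is an essentially immediate consequence of Theorem \ref{thm-st-real-psp-pq} together with the remark that the Lie algebras of ${\rm Sp}(p,q)$ and ${\rm PSp}(p,q)$ are canonically identified. Specifically, since ${\rm PSp}(p,q) = {\rm Sp}(p,q)/Z$ for the finite central subgroup $Z \subset {\rm Sp}(p,q)$, the covering projection $\pi\colon {\rm Sp}(p,q) \to {\rm PSp}(p,q)$ induces a Lie algebra isomorphism $d\pi\colon \s\p(p,q) \xrightarrow{\sim} {\rm Lie}({\rm PSp}(p,q))$ that intertwines the two adjoint actions.

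The cleanest route is therefore to simply verify that the conjugating element $g$ constructed in the proof of Theorem \ref{thm-st-real-psp-pq} already lives in ${\rm Sp}(p,q)$ itself (not merely in the quotient). Recall that $g$ is defined on the basis provided by Proposition \ref{unitary-J-basis} by $g(X^l v^d_j) := (-1)^l X^l v^d_j \ib$. Being right multiplication by a scalar composed with a sign, it is an $\H$-linear map of $\H^{p+q}$; a one-line check using $\overline{\ib} = -\ib$ and the $\epsilon$-$\sigma$ hermitian property of $\langle \cdot, \cdot \rangle$ confirms $\langle g v, g w \rangle = \langle v, w \rangle$, so $g \in {\rm Sp}(p,q)$. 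The identity $gX = -Xg$ is then immediate from the factor $(-1)^l$, so ${\rm Ad}(g) X = -X$ and hence $-X \in \OC_X$ in $\s\p(p,q)$.

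There is no substantive obstacle: the theorem already does all the work, and the corollary is only a matter of observing that the witness $g$ produced there is an honest element of ${\rm Sp}(p,q)$ and that ${\rm Ad}_G$-reality in $\s\p(p,q)$ and in ${\rm Lie}({\rm PSp}(p,q))$ coincide under the canonical identification. Thus every non-zero nilpotent element of $\s\p(p,q)$ is ${\rm Ad}_{{\rm Sp}(p,q)}$-real, and via Corollary \ref{cor-Ad-real-unipotent-elt} this upgrades to classical reality of the corresponding unipotent element in ${\rm Sp}(p,q)$, thereby recovering and extending \cite[Theorem 1.1]{BG} to higher rank as advertised.
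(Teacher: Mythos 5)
Your proposal is correct and takes essentially the same route as the paper: the corollary is stated there as an immediate consequence of Theorem \ref{thm-st-real-psp-pq}, whose proof already exhibits the witness $g(X^l v^d_j) = (-1)^l X^l v^d_j\,\ib$ as an honest element of ${\rm Sp}(p,q)$ satisfying $gXg^{-1}=-X$, which is precisely the observation you make.
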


Now we associate a signed Young diagram for the element $ X $ by setting $ +1 $ in the  $ p_\eta $-many boxes and  $ -1 $ in the rest  $ q_\eta $-many boxes in the $ 1^{\rm st} $ column of the rectangular block of size $ t_\eta\times \eta $ for $ \eta\in \E_\d $, and place $ +1 $ in the left most box of  even length rows.  It follows that this association does not depend on the conjugacy class.  We refer to  \cite[Section 4.7]{BCM} for more details regarding the parametrizing map.   
{\it The nilpotent orbits in  $ \s\p(p,q)$ are parametrized by the signed Young diagram of signature $ (p,q) $  in which rows of  even length have their left most boxes labelled $ +1 $}, see \cite[Theorem 9.3.5]{CoMc}.

\begin{theorem}\label{thm-sp-pq-st-real}
Let $ X\in \s\p(p,q) $ be a nilpotent.  Let $ \d=\, [d_1^{t_{d_1} }, \ldots ,d_s^{t_{d_s} }]  \in \PC(p+q) $ be the partition  corresponding to the orbit $\OC_X$ of $X$, and $ \E_{\d} $ be as in \eqref{Nd-Ed-Od}.   Then $ X $ is a strongly real element if and only if  $ t_\eta $ is even for all $ \eta\in \E_\d $.
\end{theorem}

\begin{proof}
Let $ X\in \s\p(p,q) $ be a non zero nilpotent element. Then $\H^{p+q} $ has a $ \H $-basis of the form $ \{X^lv^j_d\mid d\in \N_\d \} $ which satisfies Proposition \ref{unitary-J-basis}(3).  Suppose that $ t_\eta $ is even for all $ \eta\in \E_\d $.  Define $ g\in {\rm GL}(\H^{p+q}) $ as follows :
$$  
g(X^l v^{d}_j ) := \begin{cases}
(-1)^l X^{l} v^{d}_j & \text{ if }  \ d\in \O_{\d}^1    \,,  \\
(-1)^{l+1} X^{l} v^{d}_j & \text{ if } d \in \O_{\d}^3\,, \\
(-1)^{l} ( X^l v^{d}_{j+t_d/2} )\ib & \text{ if }   d\in \E_\d \, , \ 1\leq j\leq t_d/2\,,
\\
(-1)^{l+1}  (X^l v^ {d}_{j-t_d/2})\ib & \text{ if }  d\in \E_\d \,  ,  \  t_d/2< j\leq t_d\,. 
\end{cases}
$$
Then $ g^2={\rm Id}, \, gX=-Xg $ and $ g\in {\rm Sp}(p,q)$. Hence, $X$ is strongly real.

Next assume that $X$ is a strongly real nilpotent element in  $\s\p(p,q)$. Let  $\tau \in Z_{{\rm Sp}(p,q)}  (X)$.   Recall that $ \BC $ is the ordered basis of  $ \H^{p+q}$ as in \eqref{old-ordered-basis}. Following \S \ref{sec-ordered-basis-V}, let $ [\tau]_\BC= (A_{ij}) $, and 
$\tau_D$ be the block diagonal part of  $ [\tau]_\BC$, i.e., consists of   only the matrices $(A_{jj})$ in the diagonal. Then $ \tau_D X=X \tau_D$ and $ \tau_D H=H \tau_D$. Using Lemma \ref{comm-XH}, it follows that $ \tau_D\in Z_{{\rm Sp}(p,q)}  (X,H,Y)$.  Using Proposition \ref{cor-centralizer-sl2-triple-sp-pq}, we conclude  that for $1\leq j\leq s$
\begin{align}\label{block-matrixAii-sp-pq}
  A_{jj} \,\in\, {\rm SO}^*(2t_{d_j})  \quad {\rm if\ } d_j \ {\rm even}.
\end{align}
Next  define $g\in {\rm Sp}(p,q)$ as in the proof of Theorem \ref{thm-st-real-psp-pq}. Then the matrix $ [g]_\BC $ becomes a diagonal matrix. The first $t_{d_1}+ \cdots +t_{d_s}$ diagonal entries of $ [g]_\BC $ is of the form:
\begin{align*}
{\rm diag }\big( D_1,\ldots, D_s\big),\quad {\rm where \ } D_j\,=\, (-1)^{d_j-1} \ib \,{\rm I}_{t_{d_j}} \quad {\rm for } \,1\leq j\leq s \,.
\end{align*}
Since $ X $ is strongly real,  $\nu X \nu^{-1} =-X$ for some involution $ \nu\in {\rm Sp}(p,q)$.  Then   $  \nu\, \in\, g Z_G(X) $.
It follows that  when $d_j$ is even and $1\leq j \leq s$, the matrices $A_{jj}$ also satisfy
\begin{align}\label{rel-Ajj-matrix}
(A_{jj}\ib )^2\,= \,{\rm I}_{t_{d_j}} \,.
\end{align}
Using \eqref{block-matrixAii-sp-pq} and \eqref{rel-Ajj-matrix} we conclude that 
$$
A_{jj} =-\bar A_{jj}^t $$
Let $\lambda$ be a right eigen value of $A_{jj}$. Then it follows that $-\bar \lambda^{-1}$ is also a right eigen value which is distinct from $\lambda$. This proves that $t_{d_j}$, the order of the matrix $A_{jj}$,
 must be even. This completes the proof.
\end{proof}

\subsection*{Acknowledgements} We thank P. Chatterjee, D. Prasad, A. Singh and M. Thakur for their comments on a first draft of this article. We also thank the anonymous referee for many comments and suggestion.  

Gongopadhyay acknowledges partial support by the SERB core research grant   CRG/2022/003680. 
Maity is supported by an NBHM PDF during the course of this work.

\end{document}